
\documentclass[11pt, reqno,amsmath,amsthm,amssymb,amscd]{amsart}
\usepackage{pifont}
\oddsidemargin .2in \evensidemargin .2in
\textwidth 6in
 \textheight 8.7in
\usepackage{amssymb}
\baselineskip15pt
\usepackage{mathrsfs,amssymb}
\usepackage{multicol}\multicolsep=0pt
\usepackage{pstricks,pst-node}
\usepackage[enableskew,vcentermath]{youngtab}

\usepackage[sort]{cite}

\def\crulefill{\leavevmode\leaders\hrule height 1pt\hfill\kern 0pt}
\long\def\QUERY#1{%
\leavevmode\newline%
\noindent$\star\star\star$\thinspace\textsf{Comment/Query}\crulefill\newline%
   \space #1\newline\hbox to 120mm{\crulefill}$\star\star\star$\newline}
\newtheorem{Theorem}{Theorem}[section]
\newtheorem{Lemma}[Theorem]{Lemma}
\newtheorem{Cor}[Theorem]{Corollary}
\newtheorem{Prop}[Theorem]{Proposition}

\setcounter{section}{0} \theoremstyle{definition}
\newtheorem{example}[Theorem]{Example}

\newtheorem{Defn}[Theorem]{Definition}

\newtheorem{rem}[Theorem]{Remark}

\numberwithin{equation}{section}
\theoremstyle{definition}


\makeatletter
\def\enumerate{\begingroup\ifnum\@enumdepth>3\@toodeep\else
      \advance\@enumdepth\@ne
      \edef\@enumctr{enum\romannumeral\the\@enumdepth}%
      \topsep\z@\parskip\z@
      \list{\csname label\@enumctr\endcsname}
        {\@nmbrlisttrue\let\@listctr\@enumctr
         \parsep\z@\itemsep\z@\topsep\z@
         \setcounter{\@enumctr}{0}
         \def\makelabel##1{\hss\llap{\rm ##1}}
       }\fi}

\makeatother

\let\bar=\overline
\let\epsilon=\varepsilon
\def\({\big(}
\def\){\big)}
\let\gdom\rhd

\def\0{\underline{0}}

\def\B{\mathscr B}

\def\G{\mathcal G}
\def\H{\mathscr H}

\DeclareMathOperator{\Rad}{Rad} \DeclareMathOperator*{\Res}{Res}

\let\gdom\rhd
\let\gedom\unrhd

\def\m{\mathfrak m}

\def\Std{\mathscr{T}^{std}}

\def\m{\mathfrak m}
\def\n{\mathfrak n}
\def\s{\mathfrak s}
\def\ts{\tilde\s}
\def\t{\mathfrak t}
\def\u{\mathfrak u}
\def\v{\mathfrak v}
\def\bfs{\s}
\def\bft{\t}
\def\F{\mathcal F}
\def\G{\mathcal G}
\def\Hom{\text{Hom}}
\def\Ind{\text{Ind}}
\def\Res{\text{Res}}

\def\ss{\mathsf s}
\def\ts{\mathsf t}

\def\res{\text{res} }

{\catcode`\|=\active
  \gdef\set#1{\mathinner{\lbrace\,{\mathcode`\|"8000%
                                   \let|\midvert #1}\,\rbrace}}
  \gdef\seT#1{\mathinner{\Big\lbrace\,{\mathcode`\|"8000%
                                   \let|\midverT #1}\,\Big\rbrace}}
}
\def\midvert{\egroup\mid\bgroup}
\def\midverT{\egroup\,\Big|\,\bgroup}

\def\Set[#1]#2|#3|{\Big\{\ #2\ \Big| \
           \vcenter{\hsize #1mm\centering #3}\Big\}}

\def\map#1#2{\,{:}\,#1\!\longrightarrow\!#2}


\def\la{{\lambda}}

\def\ssS{{\mathsf S}}

\def\bfT{{\mathbf T}}

\def\bfT{{\mathbf T}}



\begin{document}
\baselineskip15pt
\title{ The representations of  quantized walled Brauer algebras}
\author{ Hebing Rui and Linliang Song }
\address{H.R. Department of Mathematics,  East China Normal
University, Shanghai, 200062, China} \email{hbrui@math.ecnu.edu.cn}
\address{L.S. Department of Mathematics,  East China Normal
University, Shanghai, 200062, China}\email{52110601013@student.ecnu.edu.cn}
\thanks{H. Rui was supported partially  by NSFC in China and  Shanghai Municipal Science and Technology Commission
~11XD1402200.}

\date{\today}

\begin{abstract}
In this paper, we give a criterion on the semisimplicity of   quantized walled Brauer algebras
$\mathscr B_{r,s}$ and classify its simple modules  over an arbitrary field  $\kappa$.
\end{abstract}

\sloppy \maketitle
\section{Introduction}

Schur-Weyl  reciprocities set up close relationship between polynomial  representations of
general linear groups ${GL}_n$ over $\mathbb C$  and representations of  symmetric groups $\mathfrak S_r$~\cite{Gr}.
Such results have been generalized in various cases. In \cite{KM}, Kosuda and Murakami studied  mixed
Schur-Weyl duality between quantum general linear  group $U_q(\mathfrak{gl}_n)$ and quantized walled Brauer algebras $\mathscr B_{r, s}$
with single parameter over $\mathbb C$. Since then, quantized walled Brauer algebras
have been studied extensively in \cite{Le, DDS1, DDSII, Enyang2} etc.

A quantized walled Brauer algebra $\mathscr B_{r, s}$  with two parameters was  defined by Leduc in \cite{Le}. This is a cellular  algebra~\cite{Enyang2} over a commutative
ring  $R$ containing $1$.
 In fact, Enyang has shown that any cellular basis of the Hecke algebras associated
to symmetric groups can be lifted to a cellular basis of $\mathscr B_{r, s}$. In particular, using  anti-symmetrizers of Hecke algebras
instead of symmetrizers yields  a cellular basis of
$\mathscr B_{r,s}$ in Theorem~\ref{celb}. Our motivation for using this cellular basis  is
that bases of the corresponding  cell modules can be used to classify
 singular vectors or highest vectors appearing in the mixed tensor product of the natural module and its dual over
$U_q(\mathfrak{gl}_n)$. Such results can be used to determine  decomposition matrices of $\mathscr B_{r, s}$.
 Details will be given in \cite{Rsong}.

The aim of this paper is to study the representations of
over an arbitrary field $\kappa$ via the representation theory of cellular
algebras in \cite{GL}. In section~2, we recall the
definition of $\mathscr B_{r,s}$ and  list  some of its
properties. A cellular basis of $\mathscr B_{r,s}$ will be given in section~3.
We use  certain idempotents of
$\mathscr B_{r, s}$  to  construct Schur functors in section~4. We also  prove branching rule for cell modules of $\mathscr B_{r, s}$.
In section~5, we classify  irreducible $\mathscr B_{r,s}$-modules over $\kappa$. Finally,  we give a criterion on the semisimplicity of  $\mathscr B_{r, s}$
over $\kappa$. Such a result, which generalizes \cite[6.7]{KM}, can be considered as a counterpart of \cite[Theorem~6.3]{CVDM} for walled Brauer algebras.

\section{The quantized walled Brauer  algebra}
Throughout, we assume that  $R$ is the localization of $\mathbb Z[q, q^{-1}, \rho,
\rho^{-1}]$ at $q-q^{-1}$, which contains
 $\delta=(\rho-\rho^{-1})(q-q^{-1})^{-1}$.

\begin{Defn}\label{wbmw}~\cite{Le} Fix
$r,s \in \mathbb Z^{>0}$. The quantized walled Brauer algebra
${\mathscr{B}}_{r,s}$ is the associative $R$-algebra with generators
$e_1, g_i, g_j^*$,  $1\le i\le r-1$ and  $1\le j\le s-1$ subject
to the following relations
{\small
\begin{multicols}{2}
\begin{enumerate}
    \item $(g_i-q) (g_i+q^{-1})=0$,  $1\le i<r$,
\item $g_ig_j=g_jg_i$,  $|i-j|>1$, \item
$g_ig_{i+1}g_i=g_{i+1}g_ig_{i+1}$, $1\le i<r-1$,
\item $g_ie_1=e_1g_i$,  $i\neq 1$,
\item $e_1g_1e_1 =\rho e_1$, \item
$e_1^2=\delta e_1$, \item $g_i g^*_j=g^*_j g_i$,
\item $(g^*_i-q) (g^*_i+q^{-1})=0$,  $1\le i<s$,
\item $g^*_ig^*_j=g^*_jg^*_i$, $|i-j|>1$, \item
$g^*_ig^*_{i+1}g^*_i=g^*_{i+1}g^*_ig^*_{i+1}$, $1\le i<s-1$,
\item $g^*_ie_1=e_1g^*_i$, $i\neq 1$,\item $e_1g^*_1e_1=\rho e_1$, \item $e_1{g_{1}}^{-
1}g^*_{1} e_1g_{ 1} = {e_1}{g_{1}}^{- 1}g^*_{1 } {e_1}g^*_{1}$,\item
${g_{1}}{e_1}{g_{ 1}}^{ - 1}g^*_{1} e_1 = g^*_{1} e_1{g_{ 1}}^{ -
1}g^*_{1} {e_1}$.
\end{enumerate}
\end{multicols}}
\end{Defn}

\begin{rem} \label{rbigs} It follows from Definition~\ref{wbmw} that $\mathscr B_{r, s}\cong \mathscr B_{s, r}$.
When we discuss $\mathscr B_{r,s}$, we can  assume $r\ge s$ without
loss of any generality.
\end{rem}


If we allow $s=0$, then $\mathscr B_{r,s}$ is the usual Hecke
algebra $\H_{r}$ associated to the symmetric group $\mathfrak S_r$. More explicitly, it is generated by $g_i, 1\le
i\le r-1$,  subject to the defining relations (a)-(c) in Definition~\ref{wbmw}. In general,
$\mathscr B_{r,s}$ contains two subalgebras which are isomorphic to
$\H_r$ and $\H_s$, respectively.

\begin{Lemma}\cite{Enyang2} \label{inv} There is an $R$-linear
anti-involution $\sigma$ on $\mathscr{B}_{r,s}$ which fixes all
generators  $e_1,  g_i$ and $g_j^*$,  $1\le i\le r-1$ and $1\le j\le
s-1$. \end{Lemma}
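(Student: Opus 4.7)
The plan is to define $\sigma$ on generators, extend it to the free associative $R$-algebra on the generating set as an $R$-linear anti-homomorphism, and verify that the ideal of defining relations from Definition~\ref{wbmw} is preserved setwise. Once this is established, $\sigma$ descends to an anti-homomorphism on $\mathscr{B}_{r,s}$, and since $\sigma^2$ fixes each generator it equals the identity, so $\sigma$ is an anti-involution.

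Concretely, I would first declare $\sigma(e_1)=e_1$, $\sigma(g_i)=g_i$, $\sigma(g_j^*)=g_j^*$, and then extend by the rule $\sigma(xy)=\sigma(y)\sigma(x)$ and $R$-linearity to a map on the free algebra. The task is then to check, relation by relation, that each defining relation in Definition~\ref{wbmw} is mapped by $\sigma$ to a relation that still holds in $\mathscr{B}_{r,s}$. For the Hecke-type relations (a)--(c) and (h)--(j), the quadratic relation is visibly invariant, the commutation relations become themselves after reversing the two factors, and the braid relations are palindromic and so are fixed. The relations (d), (g), (k) that describe commutation of the $g_i$ and $g_j^*$ with $e_1$ (or with each other) are also reversed onto themselves. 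Relations (e), (f), (l) are either palindromes or scalar equations, and are fixed as well.

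The only step that is not immediate is handling the two mixed tangle relations (m) and (n). Applying $\sigma$ to (m) yields
\[
g_1\,e_1\,g_1^*\,g_1^{-1}\,e_1 \;=\; g_1^*\,e_1\,g_1^*\,g_1^{-1}\,e_1 ,
\]
and applying $\sigma$ to (n) yields
\[
e_1\,g_1^{-1}\,g_1^*\,e_1\,g_1 \;=\; e_1\,g_1^{-1}\,g_1^*\,e_1\,g_1^* .
\]
Using relation (g), which gives $g_1 g_1^* = g_1^* g_1$ and hence $g_1^{-1} g_1^* = g_1^* g_1^{-1}$, these become precisely (n) and (m) respectively. So the set $\{(m),(n)\}$ is swapped by $\sigma$, modulo the commutation relation (g).

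Since every defining relation is taken to a valid relation in $\mathscr{B}_{r,s}$, the map $\sigma$ descends to a well-defined $R$-linear anti-homomorphism $\sigma:\mathscr{B}_{r,s}\to \mathscr{B}_{r,s}$. Finally, $\sigma^2$ is an $R$-algebra homomorphism fixing the generating set $\{e_1,g_i,g_j^*\}$, hence equals the identity, completing the proof. The main obstacle throughout is the bookkeeping needed to rewrite the images of (m) and (n) into the original relations using the commutation relation (g); the other relations are immediate.
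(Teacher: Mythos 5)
Your proposal is correct and takes the standard direct-verification approach. The paper itself simply cites \cite{Enyang2} for this lemma without reproducing a proof, and what you describe is exactly the natural argument one would write: define $\sigma$ on generators, extend to an anti-homomorphism of the free algebra, and check that the ideal of defining relations is $\sigma$-stable. Relations (a)--(l) are palindromic or trivially reversed onto themselves, while (m) and (n) are swapped by $\sigma$ modulo the commutation relation (g). One tiny bookkeeping remark: the literal image of (n) under $\sigma$ is $e_1 g_1^* g_1^{-1} e_1 g_1 = e_1 g_1^* g_1^{-1} e_1 g_1^*$ (with $g_1^* g_1^{-1}$ rather than $g_1^{-1} g_1^*$ as you displayed), but since these agree by (g) the conclusion is unaffected. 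You should also note explicitly that $\sigma$ fixes $g_1^{-1}$; this follows either because $\sigma(g_1)\sigma(g_1^{-1}) = \sigma(g_1^{-1} g_1) = 1$ once $\sigma$ is known to be an anti-homomorphism, or directly because relation (a) gives $g_1^{-1} = g_1 - (q - q^{-1})$, which is manifestly $\sigma$-fixed. With that small clarification, the proof is complete and correct.
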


For convenience, we write $g_{i,j}=g_{i-1}g_{i-2}\cdots g_j$, $i>j$,
$g_{i,i}=1$ and  $g_{i, j}=g_{i} g_{i+1}\cdots g_{j-1}$, $i<j$.
Similarly, we have the notation $g_{i,j}^*$. Given two positive integers $i, j$ with $
i\leq r$ and $ j\leq s$, let \begin{equation}\label{eij}  e_{ i, j} = g_{ 1, i }^{-1}
 g_{j, 1 }^*  e_1{g_{ 1, i}} ({g^*_{ j,1}})^{-1}\text{ and  $
 \bar e_{i,j} =g_{1,i}^{-1} g^*_{j, 1} e_1 g_{1,j}^*
g_{i,1}^{-1}$.}\end{equation}


\begin{Lemma}\label{tool} Let  $e_i=e_{i,i}$, $i\le \min\{r,s\}$. Then
\begin{enumerate}
\item $e_{i}{g_k} = {g_k}{e_{i}}$,  $ i < k < r$, and   $ {e_{i}}g_l^ *  = g_l^ * {e_{i}}$,  $i <l < s$,
\item $e_{i}^2  = \delta e_{i}$, $1\leq i\leq min\{r,s\}$,
\item $e_{ i}{g_{i }}^{\epsilon}{e_{ i}} =e_{ i}(g_{i }^*)^{\epsilon}{e_{i}} = \rho^{\epsilon} e_{ i}$,  $1\leq i<
\text{min}\{r,s\}$, $\epsilon\in \{1, -1\}$,
\item $e_{ i}{g_{ i }}(g_{ i }^*)^{-1}{e_{ i}} = {e_{ i}}g_{ i }^ * g_{i}^{ - 1}{e_{  i}} = {e_{i}}{e_{ i +1}}=e_{i+1}e_i $,  $1\leq i<\text{min}\{r,s\}$,
\item $e_{ i}{e_{i + 1}}{g_{ i}} = {e_{i+1}}{e_{i}}g_{i}^*$, $1\leq i<\text{min}\{r,s\}$,
\item $g_{i}{e_{i}}{e_{i+1}} = g_{i}^* {e_{i+1}}{e_{i}}$,  $1 \leq i<\text{min} \{r,s\}$,
\item $e_{i}{e_{j}} = {e_{j}}{e_{i}}$,  $1\leq i,j\leq \text{min}\{r,s\}$.\end{enumerate} \end{Lemma}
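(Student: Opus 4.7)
The plan rests on the observation that, using the commutation $g_j g_k^* = g_k^* g_j$ of (g) together with the braid relations, the definition \eqref{eij} is equivalent to the recursion
\[
e_{i+1} \;=\; g_i^{-1} g_i^*\, e_i\, g_i\, (g_i^*)^{-1},
\]
exhibiting each $e_i$ as a conjugate of $e_1$. Part (b) follows immediately by conjugating $e_1^2 = \delta e_1$. For (a), if $i < k$ then $g_k$ commutes with every factor entering the expression for $e_i$ --- with $g_j$ for $j \le i-1$ because $|k-j|>1$, with each $g_j^*$ by (g), and with $e_1$ by relation (d); the argument for $g_l^*$ is symmetric. For (g) of the lemma, I would induct on $|i - j|$ (WLOG $i < j$): by the recursion, $e_j$ is a conjugate of $e_{j-1}$ by $g_{j-1}^{\pm 1}$ and $(g_{j-1}^*)^{\pm 1}$, and (a) says all four of these factors commute with $e_i$, so $e_i e_j = e_j e_i$ follows from $e_i e_{j-1} = e_{j-1} e_i$.

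For (c), the case $i = 1,\, \epsilon = 1$ is exactly relations (e) and (l) of Definition~\ref{wbmw}. For $\epsilon = -1$, substituting the quadratic relation $g_1^{-1} = g_1 - (q-q^{-1})$ and using (b) gives
\[
e_1 g_1^{-1} e_1 \;=\; e_1 g_1 e_1 - (q-q^{-1})\, e_1^2 \;=\; \rho e_1 - (q - q^{-1})\delta\, e_1 \;=\; \rho^{-1} e_1,
\]
using $\delta(q - q^{-1}) = \rho - \rho^{-1}$; the $g_1^*$-case is identical. The step from $i$ to $i+1$ expands $e_{i+1} g_{i+1}^{\pm 1} e_{i+1}$ via the recursion, moves $g_{i+1}^{\pm 1}$ past the $g_i^*$-factors using (g), and then applies the Hecke identity $g_i g_{i+1} g_i^{-1} = g_{i+1}^{-1} g_i g_{i+1}$ together with (a) and the inductive hypothesis to collapse the result.

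The heart of the proof is (d), the only step that invokes the mixed walled-Brauer relations (m) and (n). Substituting the recursion into $e_i e_{i+1}$ and applying a level-$i$ avatar of (m) (obtained by conjugating (m) through $g_{1,i}$ and $g_{i,1}^*$, which is legitimate because the intermediate factors commute appropriately), one finds
\[
e_i e_{i+1} \;=\; e_i g_i^{-1} g_i^*\, e_i\, g_i (g_i^*)^{-1} \;=\; e_i g_i^{-1} g_i^*\, e_i,
\]
and agreement with $e_i g_i (g_i^*)^{-1} e_i$ comes from expanding $g_i$ and $g_i^*$ via their quadratic relations and applying (c) to kill the difference. A symmetric argument with (n) yields $e_{i+1} e_i = e_i g_i^* g_i^{-1} e_i$, equal to the first expression by $g_i g_i^* = g_i^* g_i$. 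Items (e) and (f) follow by multiplying (d) by $g_i$ or $g_i^*$ on the appropriate side and simplifying via the Hecke braid relation together with (a) and (c). I expect the chief technical obstacle to be precisely this lifting of (m) and (n) from level $1$ to level $i$; once the recursion is in hand, however, the argument reduces to a mechanical, if intricate, exercise in commutation bookkeeping.
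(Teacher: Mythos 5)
Your overall strategy --- using the recursion $e_{i+1} = g_i^{-1} g_i^*\, e_i\, g_i (g_i^*)^{-1}$ to conjugate the level-$1$ relations up to level $i$ by induction --- is exactly what the paper does, and your treatments of (a), (b), (c), (d), (e), (f) match the paper's. In particular, the quadratic-relation expansion you use together with (b) and (c) to pass between $e_i g_i (g_i^*)^{-1} e_i$ and $e_i g_i^{-1} g_i^* e_i$ is the same device the paper invokes for the remaining cases of (c) and (d).

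There is, however, a gap in your treatment of (g). You prove (g) before (d), inducting on $|i-j|$ by reducing $e_i e_j$ to $e_i e_{j-1}$ via the recursion and appealing to (a) to commute $e_i$ past $g_{j-1}^{\pm 1}$ and $(g_{j-1}^*)^{\pm 1}$. But (a) gives $e_i g_k = g_k e_i$ only for $k>i$, so this reduction is valid only when $j-1 > i$, i.e.\ $j\ge i+2$. The base case $j=i+1$ --- the equality $e_i e_{i+1} = e_{i+1} e_i$ --- is not a commutation-bookkeeping consequence of (a): it is precisely part of item (d), which you establish only afterwards and which genuinely requires the mixed relations (m) and (n). As ordered, your induction for (g) therefore does not bottom out. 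The fix is minor (prove (d) first and cite it as the base case), and this is in fact where the paper places (g): the paper's argument is a direct computation that commutes the inner factors $g_k^{\pm1}, (g_k^*)^{\pm1}$ with $k>i$ past $e_i$ via (a), then uses (d) to recognize $e_i g_i^{-1} g_i^* e_i = e_i e_{i+1}$, and finishes with (e).
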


\begin{proof}  (a) follows from Definition~\ref{wbmw}(b),(d),(g),(i),(k).
By (\ref{eij}),  $$e_{i}^2 = g_{i-1}^{-1 }g_{i-1}^* e_{i-1}^2{g_{i-1}} (g_{i - 1}^*)^{
-1}=\delta g_{i-1}^{-1 }g_{i-1}^* e_{i-1}{g_{i-1}} (g_{i - 1}^*)^{
-1}=\delta e_i, $$
where the second equality follows from   induction assumption on $i-1$. This proves (b).   By Definition~\ref{wbmw}(g), braid relations, (a),  and induction assumption on $i-1$, we have   $$
{e_{i}}{g_{i}}{e_{i}}  = g_{i-1}^{ - 1}g_{i-1}^* g_i^{-1} {e_{i-1}}{g_{i-1}}{e_{i-1}} g_i{g_{i-1}}(g_{i - 1}^ *)^{-1}=\rho e_i. $$
One can check   $e_{ i}{g^*_{i }}{e_{ i}}=\rho e_{i}$, similarly.
Finally, the remaining cases of (c) follows from
 (b) and Definition~\ref{wbmw}(a) or (h).
 By (a) and  braid relations, we have
 $$ e_i g_i^{-1} g_i^* e_i g_i=g_{i+1,i-1}^{-1} g_{i-1, i+1}^* e_{i-1} g_{i-1}^{-1}g_{i-1}^* e_{i-1} g_{i-1} g_{i+1, i-1} ( g_{i-1, i+1}^*)^{-1}.$$
 Using induction assumption on $i-1$ together with (a) and braid relations, we have
 \begin{equation} \label{eii1} e_i g_i^{-1} g_i^* e_i g_i=e_i
g_i^{-1} g_i^*e_i g_i^*\end{equation}
By similar arguments, we have
\begin{equation}\label{eii2}  g_i e_i g_i^{-1} g_i^* e_i =g_i^* e_i g_i^{-1}
g_i^*e_i.\end{equation}
 So, $ {e_{ i}}g_{ i }^ * g_{i}^{ - 1}{e_{
i}}=e_ie_{i+1}=
e_{i+1}e_i$.  Further, via these equalities together with (b)-(c)  and Definition~\ref{wbmw}(a)(h), we have  $e_i(g_i^*)^{-1} g_{i}e_i = e_ie_{i +1}$.
This proves (d).   (e)-(f) follow from (d) and
(\ref{eii1})-(\ref{eii2}). Finally, we can assume $i<j$ without loss of any generality. By (a), (d), (e) and \eqref{eij},
$$\begin{aligned} e_i e_j&=e_i g_{i, j}^{-1} g_{j, i}^* e_i g_{i,j} (g_{j, i}^*)^{-1}= g_{i+1, j}^{-1} g_{j, i+1}^* e_i g_i^{-1}g_i^* e_ig_{i,j} (g_{j, i}^*)^{-1}\\
& = g_{i+1, j}^{-1} g_{j, i+1}^* e_i e_{i+1}g_{i,j} (g_{j, i}^*)^{-1}=g_{i+1, j}^{-1} g_{j, i+1}^* e_i e_{i+1}g_{i+1,j} (g_{j, i+1}^*)^{-1}=   e_j e_i.\\
\end{aligned}$$
 This proves (g).
\end{proof}


\begin{Prop}\label{ccen} Given  $r, s\in \mathbb Z^{>0}$, let
$$c_{r, s}=\sum_{i=1}^r \sum_{j=1}^s  \bar e_{i, j}-\rho^{-1} \sum_{i=2}^r \sum_{j=1}^{i-1} g_{j,i}^{-1} g_{i, j+1}^{-1}
-\rho \sum_{i=2}^s \sum_{j=1}^{i-1} g^*_{i,j} g^*_{j+1,i}.$$
Then  $c_{r, s}$ is central in $\mathscr B_{r, s}$.
\end{Prop}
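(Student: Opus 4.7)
I will verify centrality of $c_{r,s}$ by showing it commutes with each generator $e_1$, $g_k$ ($1\le k<r$), and $g^*_k$ ($1\le k<s$). The first step is to decompose
\[
c_{r,s}=A+B+C,\qquad A=\sum_{i,j}\bar e_{i,j},\quad B=-\rho^{-1}\!\!\sum_{i>j}g_{j,i}^{-1}g_{i,j+1}^{-1},\quad C=-\rho\!\!\sum_{i>j}g^*_{i,j}g^*_{j+1,i},
\]
where $B$ lies in the Hecke subalgebra generated by $g_1,\dots,g_{r-1}$ and $C$ in the one generated by $g_1^*,\dots,g_{s-1}^*$. The argument for commutation with $g_k^*$ parallels that for $g_k$ upon interchanging the two Hecke subalgebras and inverting $\rho$; it is thus enough to verify $[c_{r,s},g_k]=0$ and $[c_{r,s},e_1]=0$.

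For commutation with $g_k$: Definition~\ref{wbmw}(g) gives $[C,g_k]=0$, so the task reduces to $[A,g_k]=-[B,g_k]$. On the $B$-side, $[B,g_k]$ lies in $\H_r$ and I will compute it directly from the braid relations together with~\ref{wbmw}(a); only a bounded number of summands contribute for each fixed $k$, and the rest collapses to an explicit expression. On the $A$-side, $g_k$ commutes with the middle factors $g^*_{j,1}e_1 g^*_{1,j}$ of $\bar e_{i,j}$ by Definition~\ref{wbmw}(g), so only the flanking $g$-factors can contribute. I partition $A=\sum_j A_j$ with $A_j=\sum_i\bar e_{i,j}$ and push $g_k$ across each $A_j$, using Lemma~\ref{tool}(a) when the index $i$ is far from $k$ and Lemma~\ref{tool}(c)--(f) at the boundary $i\in\{k,k+1\}$, where one exchanges $g_i$ with $g_i^*$ across $e_ie_{i+1}$. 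The resulting telescoping sum is to match $-[B,g_k]$ exactly, with the scalar $\rho^{-1}$ produced by $e_k g_k^{-1}e_k=\rho^{-1}e_k$ from Lemma~\ref{tool}(c).

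For commutation with $e_1$, the main obstacle, all three pieces contribute. On the $B$-side, only the summand $g_1^{-1}$ coming from $(i,j)=(2,1)$ fails to commute with $e_1$, by Definition~\ref{wbmw}(d); the quadratic relation~\ref{wbmw}(a) together with $e_1g_1e_1=\rho e_1$ then reduces $[B,e_1]$ to an explicit $R$-multiple of $e_1$, and~\ref{wbmw}(h),(l) gives a symmetric treatment of $[C,e_1]$. On the $A$-side, I stratify the double sum by whether $i=1$ or $i>1$ and $j=1$ or $j>1$. The case $i=j=1$ gives $\bar e_{1,1}=e_1$, which commutes trivially. The cases $(i=1,j>1)$ and $(i>1,j=1)$ are to be handled with Lemma~\ref{tool} and Definition~\ref{wbmw}(d)--(f),(k)--(l). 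The mixed case $i,j>1$ is where the coupling relations~\ref{wbmw}(m)--(n) together with~\eqref{eii1}--\eqref{eii2} become essential, since this is the only place the $g$ and $g^*$ generators interact through $e_1$. Once all contributions are collected, those coming from $A$ should cancel exactly against those from $B$ and $C$. The hard part will be the book-keeping: organising the $O(rs)$ summands into a coherent telescoping cancellation, in which the asymmetric signs $\mp\rho^{\mp 1}$ attached to $B$ and $C$ are forced by the $\rho$-asymmetry in Lemma~\ref{tool}(c) combined with the mixed relations~\ref{wbmw}(m)--(n).
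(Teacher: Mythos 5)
Your decomposition $c_{r,s}=A+B+C$ and the symmetry reduction between $g_k$ and $g_k^*$ are the right starting point, but the plan diverges from what actually works at the two crucial steps, and one of your explicit claims is false.

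First, the $g_k$-commutation. You frame the task as showing $[A,g_k]=-[B,g_k]$ by a telescoping computation in which $\rho^{-1}$ ``is produced by $e_kg_k^{-1}e_k=\rho^{-1}e_k$''. In fact $[A,g_k]$ and $[B,g_k]$ vanish separately, and no factor of $\rho^{-1}$ ever appears. The element $B$ is $-\rho^{-1}$ times the sum of the inverse Jucys--Murphy type elements $\sum_{j<i}g_{(j,i)}^{-1}$, which is central in $\H_r$ for the same reason the usual sum of Jucys--Murphy elements is (the defining relations of $\H_r$ are stable under $g_i\mapsto g_i^{-1}$), so $[B,g_k]=0$ outright. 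For $A$, the paper uses the two observations $\bar e_{m,j}g_i=g_i\bar e_{m,j}$ for $m\notin\{i,i+1\}$ and $(\bar e_{i+1,j}+\bar e_{i,j})g_i=g_i(\bar e_{i+1,j}+\bar e_{i,j})$, so the double sum $A$ already commutes with every $g_k$. Your proposed route, hunting for a nonzero cancellation between $[A,g_k]$ and $[B,g_k]$, is chasing something that isn't there; even if you ground it out you would just discover both sides are zero, having missed the clean structural reason.

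Second, and more seriously, the $e_1$-commutation. You assert that among the summands of $B$, ``only the summand $g_1^{-1}$ coming from $(i,j)=(2,1)$ fails to commute with $e_1$, by Definition~\ref{wbmw}(d).'' That is not correct: the $(i,j)$-summand $g_{j,i}^{-1}g_{i,j+1}^{-1}$ is a word in $g_j,\ldots,g_{i-1}$, so Definition~\ref{wbmw}(d) makes it commute with $e_1$ only when $j\ge 2$. Every term with $j=1$, that is $(i,1)$ for all $i$ with $2\le i\le r$, involves $g_1$ and does not commute with $e_1$ trivially (e.g.\ for $(3,1)$ one gets $g_2^{-1}g_1^{-1}g_2^{-1}$). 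So your direct book-keeping would have to track $r-1$ non-commuting terms from $B$ alone, plus a matching family from the $\bar e_{i,1}$-summands of $A$, and you have no mechanism in hand for pairing them off. The paper sidesteps this entirely by induction on $r+s$: it writes $c_{r,s}-c_{r-1,s}=\sum_{j=1}^s\bar e_{r,j}-\rho^{-1}\sum_{j=1}^{r-1}g_{j,r}^{-1}g_{r,j+1}^{-1}$, notes that $e_1$ commutes with $c_{r-1,s}$ by induction (since $e_1\in\mathscr B_{r-1,s}$), and then for the increment there really is only \emph{one} offending term on each side, namely $\bar e_{r,1}$ and $\rho^{-1}g_{1,r}^{-1}g_{r,2}^{-1}$, whose commutators with $e_1$ cancel by Lemma~\ref{tool}(c) and Definition~\ref{wbmw}(d)--(e). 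Without that induction, your ``once all contributions are collected, those coming from $A$ should cancel exactly against those from $B$ and $C$'' is an unsupported hope rather than an argument.
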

\begin{proof} If  $r+s=2$,  then   $r=s=1$ and
$ c_{1,1}=e_1$, which is   central in $\mathscr B_{1,1}$. In the remaining part of the proof, we assume  $r+s\ge 3$.

We claim that $e_1$ commutes with $c_{r, s}$ by induction on $r+s$.  As mentioned in Remark~\ref{rbigs}, we assume
$r\ge s$. So,  $r\ge 2$ and
\begin{equation} \label{ind1} c_{r, s}-c_{r-1, s}=\sum_{j=1}^s \bar e_{r, j}-\rho^{-1} \sum_{j=1}^{r-1} g_{j, r}^{-1} g_{r, j+1}^{-1}.\end{equation}
By induction assumption on $\mathscr B_{r-1, s}$, it suffices to prove   $e_1 (c_{r, s}-c_{r-1, s})=(c_{r, s}-c_{r-1, s})e_1$.

In fact, by Lemma~\ref{tool}(a),(d),(e), $\bar e_{r, j} e_1 =e_1\bar e_{r, j}$,
$\forall j>1$. By Definition~\ref{wbmw}(d)-(e) and Lemma~\ref{tool}(c), $$\bar e_{r, 1} e_1-e_1\bar e_{r,1} =\rho^{-1}(
g_{1,r}^{-1} g_{r, 2}^{-1} e_1- e_1g_{1,r}^{-1} g_{r,2}^{-1}).$$
Finally, using   Definition~\ref{wbmw}(d) yields $$ e_1 \sum_{j=2}^{r-1} g_{j, r}^{-1} g_{r, j+1}^{-1}=\sum_{j=2}^{r-1} g_{j, r}^{-1} g_{r, j+1}^{-1} e_1.$$
So, $e_1 (c_{r, s}-c_{r-1, s})=(c_{r, s}-c_{r-1, s})e_1$, proving our claim.

Now,  we prove $c_{r, s} g_k=g_k c_{r, s}$ for all $k, 1\le k\le r-1$.  In fact, by  Definition~\ref{wbmw}(b)(c)(i)(j),
$\bar e_{k, j} g_{i}=g_i \bar  e_{k, j}$ for  $k\not\in \{i, i+1\}$.
   Since  $(\bar
e_{i+1, j} +\bar  e_{i,j})g_i=g_i (\bar e_{i+1, j} +\bar e_{i,j})$,  we have
 $$  \sum_{i=1}^r \sum_{j=1}^s \bar e_{i, j}
g_k=g_k\sum_{i=1}^r \sum_{j=1}^s  \bar e_{i, j}, \text{ for all $k, 1\le k\le r-1$}.$$

If we use $g_k$ instead of $g_k^{-1}, 1\le k\le r-1$ in   $x:=\sum_{i=2}^r
\sum_{j=1}^{i-1} g_{j,i}^{-1} g_{i, j+1}^{-1}$ , then $x$ is the summation
of the Murphy elements of $\H_r$, which is central in $\H_{r}$ (see e.g. \cite{Ma}). However,
 from Definition~\ref{wbmw}(a)-(c), one can see easily that $\H_r$ can be defined via $g_i^{-1}$.  So $x$  is a central in  $\H_r$. By Definition~\ref{wbmw}(g),
 $c_{r, s} g_k=g_k c_{r, s}$, $\forall k, 1\le k\le r-1$. Finally, one can check $c_{r, s}
g^*_{k}=g^*_k c_{r, s}$, similarly.
\end{proof}


It is known that $\mathfrak S_r$ is
 generated by $s_i$, the  basic transposition $(i, i+1)$,
$1\le i\le r-1$. Let  $\mathfrak S_{r}\times \mathfrak S_s$ be the
product of $\mathfrak S_{r}$ and $ \mathfrak S_s$. We use $s_i^*$ to
denote the basic transposition $(i, i+1)$ in $\mathfrak S_s$.

For convenience, we write $s_{i, j}=s_{i-1} s_{i-1,j}$, $i >j$, $s_{i,i}=1$ and $s_{i,j}=s_{i}s_{i+1, j}$,
$i<j$. Similarly, we have the notation $s^*_{i,j}$. The following result gives the explicit description on  $\mathscr{D}_{r,s}^f$  in \cite{Enyang2}.

\begin{Lemma}\label{rcs} Fix $r, s\in \mathbb Z^{>0}$ and $f\in \mathbb N$  with $f\le \min \{r, s\}$. Let $\mathfrak{G}_f$ be the subgroup of
$\mathfrak{S}_r\times\mathfrak{S}_s$ generated by $s_{i} s^*_{i} $, $1\le i\le f-1$. Then  $\mathscr{D}_{r,s}^f$ is  a complete set of
right coset representatives for
$\mathfrak{S}_{r-f}\times\mathfrak{G}_f\times\mathfrak{S}_{s-f}$ in
$\mathfrak{S}_r\times\mathfrak{S}_s$ where
\begin{equation}\label{rcs1}  \mathscr{D}_{r,s}^f=\{ s_{f,i_f} s^*_{f, j_f} \cdots
s_{1,i_1}s^*_{1,{j_1}}|  k \le {j_k},
 1 \le i_1< i_2 < \cdots <i_f\le r \}.\end{equation} \end{Lemma}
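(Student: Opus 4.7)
The plan is to factor each element of $\mathscr{D}_{r,s}^f$ as a commuting pair $(d, d^*) \in \mathfrak{S}_r \times \mathfrak{S}_s$, identify each factor as distinguished right coset representatives for an appropriate subgroup, and then glue the two decompositions using the diagonal subgroup $\mathfrak{G}_f$. Since $s_i$ and $s_j^*$ lie in different direct factors they commute, so letting
\[
D_r = \{s_{f, i_f} \cdots s_{1, i_1} : 1 \le i_1 < \cdots < i_f \le r\}, \qquad
D_s^* = \{s_{f, j_f}^* \cdots s_{1, j_1}^* : k \le j_k \le s\},
\]
we have $\mathscr{D}_{r,s}^f = D_r \cdot D_s^*$, and the proof splits naturally into three steps.

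First I would check that $D_r$ is a complete set of right coset representatives for $\mathfrak{S}_f \times \mathfrak{S}_{r-f}$ in $\mathfrak{S}_r$ (with $\mathfrak{S}_f$ acting on $\{1, \ldots, f\}$). A direct computation shows $d := s_{f, i_f} \cdots s_{1, i_1}$ sends $k \mapsto i_k$ for $k = 1, \ldots, f$, so these are the classical distinguished representatives indexed by the $\binom{r}{f}$ subsets $\{i_1 < \cdots < i_f\}$ of $\{1, \ldots, r\}$; the count matches the index. Next I would show $D_s^*$ is a complete set of right coset representatives for $\mathfrak{S}_{s-f}$ (acting on $\{f+1, \ldots, s\}$) in $\mathfrak{S}_s$. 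Here $|D_s^*| = \prod_{k=1}^f (s-k+1) = s!/(s-f)!$ equals the index, so injectivity suffices. A right coset of $\mathfrak{S}_{s-f}$ is determined by the tuple $(d^*(1), \ldots, d^*(f))$; a direct calculation yields $d^*(1) = j_1$, and then right-multiplying $d^*$ by $(s_{1, j_1}^*)^{-1}$ produces $s_{f, j_f}^* \cdots s_{2, j_2}^*$, an element fixing $1$ that, after re-indexing, is the $(f-1, s-1)$-analog of an element of $D_s^*$ inside the stabilizer of $1$; induction on $f$ then recovers $(j_2, \ldots, j_f)$.

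To assemble the coset decomposition for $H := \mathfrak{S}_{r-f} \times \mathfrak{G}_f \times \mathfrak{S}_{s-f}$, take any $(x, y) \in \mathfrak{S}_r \times \mathfrak{S}_s$. By the first step, uniquely write $x = \pi_1 \sigma \cdot d$ with $\pi_1 \in \mathfrak{S}_{r-f}$, $\sigma \in \mathfrak{S}_f$, $d \in D_r$. Letting $\sigma^*$ denote the image of $\sigma$ under the natural inclusion $\mathfrak{S}_f \hookrightarrow \mathfrak{S}_s$, the second step uniquely writes $(\sigma^*)^{-1} y = \pi_2 \cdot d^*$ with $\pi_2 \in \mathfrak{S}_{s-f}$, $d^* \in D_s^*$. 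This yields $(x, y) = (\pi_1 \sigma, \sigma^* \pi_2) \cdot (d, d^*)$ with $(\pi_1 \sigma, \sigma^* \pi_2) \in H$ and $(d, d^*) \in \mathscr{D}_{r,s}^f$; uniqueness is inherited from that of the two ingredient decompositions, proving the lemma.

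The main obstacle is the injectivity step for $D_s^*$. The difficulty is that $d^*$ does \emph{not} simply send $k \mapsto j_k$ in general: factors applied later in the product can displace the intermediate images, so the tuple $(d^*(1), \ldots, d^*(f))$ is not read off directly from $(j_1, \ldots, j_f)$. Extracting $(j_1, \ldots, j_f)$ from $d^*$ therefore requires care, and the recursive ``peel off from the right'' argument sketched above is designed precisely to avoid a case analysis by reducing the $(f, s)$ problem inductively to the $(f-1, s-1)$ problem.
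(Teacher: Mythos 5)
Your proof is correct and takes a genuinely different route from the paper's. The paper's argument is a ``one‐sided inclusion plus count'': it asserts (as ``obviously'' true) that the elements of the right‐hand side of \eqref{rcs1} lie in pairwise distinct right cosets, and then devotes the whole proof to an inductive count showing the set has cardinality $C_r^f C_s^f f!$, which equals the index of $\mathfrak{S}_{r-f}\times\mathfrak{G}_f\times\mathfrak{S}_{s-f}$. You instead build the coset decomposition directly: factoring each element of $\mathscr{D}_{r,s}^f$ as $d\cdot d^*$ with $d\in D_r$ and $d^*\in D_s^*$, showing $D_r$ represents cosets of $\mathfrak{S}_f\times\mathfrak{S}_{r-f}$ in $\mathfrak{S}_r$ via the observation $d(k)=i_k$, showing $D_s^*$ represents cosets of $\mathfrak{S}_{s-f}$ in $\mathfrak{S}_s$ via the ``peel $s^*_{1,j_1}$ off the right'' recursion, and then gluing the two through the diagonal subgroup $\mathfrak{G}_f$ by choosing the $\mathfrak{S}_f$-part $\sigma$ from the $r$-side decomposition and transporting it to $\sigma^*$ on the $s$-side. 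The asymmetric choice of parabolic — $\mathfrak{S}_f\times\mathfrak{S}_{r-f}$ on the $r$-side but only $\mathfrak{S}_{s-f}$ on the $s$-side — is exactly what the diagonal $\mathfrak{G}_f$ forces, and it explains why the $j_k$ need not increase while the $i_k$ must. Your approach is longer but proves the injectivity statement that the paper leaves implicit, and it yields an explicit normal-form algorithm for any $(x,y)\in\mathfrak{S}_r\times\mathfrak{S}_s$; the paper's approach is shorter and purely enumerative, at the cost of leaving the distinctness-of-cosets claim unargued.
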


\begin{proof}We denote  by $\tilde {\mathscr{D}}_{r,s}^f$ the right-hand side of \eqref{rcs1}, and by $\mathscr{D}_{r,s}^f$ a complete set of
right coset representatives. Then obviously $\tilde {\mathscr{D}}_{r,s}^f\subset\mathscr{D}_{r,s}^f$.

In order to verify the inverse inclusion, it  suffices to prove that $|\tilde {\mathscr{D}}_{r,s}^f|$, the cardinality of $\tilde {\mathscr{D}}_{r,s}^f$, is
$\frac{r!s!}{(r - f)!(s - f)!f!}=C^{f}_rC^{f}_sf!$, which is clearly the cardinality of  $\mathscr{D}_{r,s}^f$, where $C^f_r$ is the binomial number.  This will be done by induction on $f$ as follows.

If  $f=0$, there is nothing to be proven.
Assume $f\ge1$. For any element in \eqref{rcs1}, we have
$i_f\ge f$. For each fixed $i:=i_f$, there are  $s-f+1$ choices of $j_f$ with $j_f\ge f$, and further, conditions for other indices are simply conditions for $\mathscr{D}_{i-1,s}^{f - 1}$.
So,
$$\begin{array}{llll}
|\tilde {\mathscr{D}}_{r,s}^f| \!\!\!&= (s \!-\! f\!+\! 1)\sum\limits_{i=f}^{r}|\mathscr{D}_{i-1,s}^{f - 1} |
\\[11pt]&=
(s\!-\!f\!+\!1)\sum\limits_{i=f}^{r}C_{i-1}^{f-1}C_s^{f-1}(f\!-\!1)!
=\sum\limits_{i=f}^{r}C_{i-1}^{f-1}C_s^ff!=C_r^fC_s^ff!,
\end{array}$$
where the second equality follows from induction assumption on $f$, and the last follows from the well-known combinatorics formula $C_r^i=C_{r-1}^i+C_{r-1}^{i-1}$.
\end{proof}

\begin{Lemma} \label{subiso1} Fix   $r, s, f\in \mathbb Z^{>0}$ with  $f\le \min\{r, s\}$.
 Let  ${\mathscr{B}}_{r,s}(f)$ be the subalgebra of
$\mathscr{B}_{r,s}$ generated by $e_{f+1}$,  $g_{i}$ and $g^*_j$,
$f+1\le i< r$ and $f+1\le j< s$. Then $
{\mathscr{B}}_{r,s}(f)\cong \mathscr{B}_{r - f,s - f}$.
\end{Lemma}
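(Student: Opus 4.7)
The plan is to construct an algebra homomorphism $\phi\map{\mathscr{B}_{r-f, s-f}}{\mathscr{B}_{r,s}(f)}$ by sending generators to their shifts, namely $e_1\mapsto e_{f+1}$, $g_i\mapsto g_{i+f}$ for $1\le i<r-f$, and $g_j^*\mapsto g_{j+f}^*$ for $1\le j<s-f$, and then to establish that this map is a bijection.

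First I would verify that $\phi$ respects all fifteen defining relations of Definition~\ref{wbmw}. The Hecke/braid relations (a)--(c) on the $g_i$, (h)--(j) on the $g_j^*$, and the commutation (g) between them transport verbatim under the index shift $i\mapsto i+f$, since the corresponding relations already hold for the shifted indices inside $\mathscr{B}_{r,s}$. The remaining relations all involve $e_{f+1}$, and each is supplied directly by Lemma~\ref{tool}: the commutations (d) and (k) follow from Lemma~\ref{tool}(a); the quasi-idempotent relation (f) is Lemma~\ref{tool}(b); the untwisting relations (e) and (l) are the two instances of Lemma~\ref{tool}(c); and the mixed braided relations (n)--(o) are exactly the identities \eqref{eii1} and \eqref{eii2} established during the proof of Lemma~\ref{tool}. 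So $\phi$ is a well-defined algebra homomorphism, and surjectivity is immediate since its image contains every generator of $\mathscr{B}_{r,s}(f)$.

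For injectivity I would compare $R$-ranks using the cellular structure. By Theorem~\ref{celb}, both $\mathscr{B}_{r,s}$ and $\mathscr{B}_{r-f, s-f}$ are free $R$-modules with explicit cellular bases indexed by combinatorial data (the number $f'$ of horizontal $e$-arcs together with coset representatives as in Lemma~\ref{rcs}, and pairs of multipartitions of the remaining strands). The combinatorics governing the cellular basis of $\mathscr{B}_{r-f, s-f}$ appears as the ``slice'' of the cellular combinatorics of $\mathscr{B}_{r,s}$ living on the strands indexed by $f+1,\dots,r$ and $f+1,\dots,s$, after the shift $i\mapsto i+f$ that defines $\phi$. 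One can then exhibit an explicit $R$-basis of $\mathscr{B}_{r,s}(f)$ of the same cardinality as a cellular basis of $\mathscr{B}_{r-f, s-f}$, and since $\phi$ carries the latter onto the former, it must be injective.

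The main obstacle is the injectivity step: checking the relations is mechanical once Lemma~\ref{tool} is invoked, but ruling out extra relations in $\mathscr{B}_{r,s}(f)$ requires the cellular machinery of Theorem~\ref{celb}. Since the cellular bases of $\mathscr{B}_{r,s}$ and $\mathscr{B}_{r-f, s-f}$ are constructed from the same building blocks (Hecke algebra cellular bases together with the coset representatives from Lemma~\ref{rcs}), the match is natural; the work lies in bookkeeping the bijection of indexing sets rather than in proving any nontrivial identity.
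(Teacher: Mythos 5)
Your homomorphism $\phi$ and the check that it respects the defining relations via Lemma~\ref{tool} match the paper's intent (note Definition~\ref{wbmw} has fourteen relations labelled (a)--(n), not fifteen, and the mixed relations you call (n)--(o) are really (m)--(n), matched by \eqref{eii1}--\eqref{eii2}). This gives well-definedness and surjectivity, which is all the paper's one-sentence proof addresses.

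The gap is in the injectivity step. First, you invoke Theorem~\ref{celb}, which comes \emph{after} Lemma~\ref{subiso1} in the paper; its proof runs through Proposition~\ref{cell-m} and Lemma~\ref{quo1}, both of which already concern $\mathscr{B}_{r,s}(f)$, so one must check that only surjectivity of $\phi$ (not the isomorphism) feeds into the chain leading to Theorem~\ref{celb} before using it here --- otherwise the argument is circular. Second, the ``slice'' you describe is not literally a subfamily of the cellular basis $\mathcal C$: under $\phi$ the cellular basis element $\sigma(g_e)e^{f'}\n_{\s\t}g_d$ of $\mathscr{B}_{r-f,s-f}$ acquires the factor $e_{f+1}\cdots e_{f+f'}$, which is \emph{not} equal to $e^{f+f'}=e_1\cdots e_{f+f'}$. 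The missing idea is to left-multiply by $e^f$: since $e^f$ commutes with all $g_i,g_j^*$ with $i,j\ge f+1$ (Lemma~\ref{tool}(a)) and $e^f\cdot e_{f+1}\cdots e_{f+f'}=e^{f+f'}$, the map $x\mapsto e^f\phi(x)$ carries the cellular basis of $\mathscr{B}_{r-f,s-f}$ bijectively onto a subfamily of $\mathcal C$, which is $R$-linearly independent by Theorem~\ref{celb}; this forces the $\phi$-images themselves to be linearly independent, hence $\phi$ injective. Without that observation, ``the match is natural... bookkeeping'' does not close the argument, since $\delta$ is not a unit in $R$ and so $e^f$-multiplication is not injective on the nose and cannot be bypassed by a naive dimension count of the subalgebra, whose $R$-rank you have no independent handle on.
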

\begin{proof} The required
isomorphism  sends $e_{f+1}$, $g_{f+i}, g_{f+j}^*$ to $e_1$, $
g_{i}, g_{j}^*$, respectively. One can compare the defining relations in Definition~\ref{wbmw} and the equalities in
Lemma~\ref{tool}.
\end{proof}

 We denote  $\mathscr B_{r, s}(f)$ by $R$ if $r=s=f$.

\begin{Lemma}\label{fixef} Given a positive integer  $f$ with  $f\le \min\{r, s\}$, we have  $\sigma(e^f)=e^f$ where
$e^f=e_1e_2\cdots e_{f}$ and $\sigma$ is given in Lemma~\ref{inv}.
\end{Lemma}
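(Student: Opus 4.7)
The plan is to proceed by induction on $f$. The base case $f=1$ is immediate from $\sigma(e_1)=e_1$. For the inductive step, my idea is to write $e^f$ in two ``mirror'' forms that are interchanged by $\sigma$, so that applying $\sigma$ to one and invoking the inductive hypothesis automatically produces the other — which is again equal to $e^f$.

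Set $X_i:=g_i(g_i^*)^{-1}$. By Definition~\ref{wbmw}(g), $g_i$ and $g_i^*$ commute, so $\sigma(X_i)=(g_i^*)^{-1}g_i=X_i$. By Lemma~\ref{tool}(d), $e_{f-1}e_f=e_{f-1}X_{f-1}e_{f-1}$. Since $e^{f-2}=e_1\cdots e_{f-2}$ commutes with $X_{f-1}$ (each $e_j$ with $j\le f-2$ commutes with both $g_{f-1}$ and $g_{f-1}^*$ by Lemma~\ref{tool}(a)) and with $e_{f-1}$ (Lemma~\ref{tool}(g)), starting from $e^f=e^{f-2}(e_{f-1}e_f)=e^{f-2}e_{f-1}X_{f-1}e_{f-1}$ and collecting the factor $e^{f-2}$ either on the left or on the right yields the two equivalent recursions
\[
e^f=e^{f-1}X_{f-1}e_{f-1}=e_{f-1}X_{f-1}e^{f-1}.
\]

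Iterating the first recursion, substituting the forward form for $e^{f-1}$ supplied by the induction, and sliding $e^{f-2}$ past $X_{f-1}$, I obtain the forward form $e^f=e_1X_1X_2\cdots X_{f-1}\,e^{f-1}$. Iterating the second recursion in parallel fashion (and using the same commutation to slide $e^{f-2}$ past $X_{f-1}$) gives the backward form $e^f=e^{f-1}\,X_{f-1}X_{f-2}\cdots X_1\,e_1$. Finally, applying $\sigma$ to the forward form and using $\sigma(X_i)=X_i$, $\sigma(e_1)=e_1$, together with the induction hypothesis $\sigma(e^{f-1})=e^{f-1}$, one gets
\[
\sigma(e^f)=e^{f-1}X_{f-1}\cdots X_1 e_1=e^f,
\]
where the last equality is the backward form. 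The only real obstacle is the commutation bookkeeping — verifying that every $e_j$ occurring in $e^{f-2}$ has index $j\le f-2<f-1$, so that Lemma~\ref{tool}(a) supplies commutation with $g_{f-1}$ and $g_{f-1}^*$ (and hence with $X_{f-1}$), and that $e_j$ commutes with $e_{f-1}$ via Lemma~\ref{tool}(g). The conceptual content of the argument is just the $\sigma$-invariance of the ``twisting'' element $X_i$, which comes directly from Definition~\ref{wbmw}(g).
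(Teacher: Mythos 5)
Your proof is correct, and it takes a genuinely different route from the paper's. The paper computes $\sigma(e_f)$ explicitly from the defining formula $e_f = g_{1,f}^{-1}\, g_{f,1}^*\, e_1\, g_{1,f}\, (g_{f,1}^*)^{-1}$ (note $\sigma(e_f)\neq e_f$ for $f>1$), and then shows $\sigma(e_f)e^{f-1}=e^f$ by a hands-on computation that peels off one pair $g_i^*g_i^{-1}$ at a time using Lemma~\ref{tool}(a),(d), and finishes with (f)--(g). You instead isolate the $\sigma$-fixed element $X_i=g_i(g_i^*)^{-1}$ — whose invariance $\sigma(X_i)=(g_i^*)^{-1}g_i=X_i$ is precisely the commutation in Definition~\ref{wbmw}(g) — rewrite Lemma~\ref{tool}(d) as $e_iX_ie_i=e_ie_{i+1}$, and derive by a straightforward auxiliary induction (using only (a), (d), (g)) the mirror normal forms $e^f=e_1X_1\cdots X_{f-1}e^{f-1}$ and $e^f=e^{f-1}X_{f-1}\cdots X_1e_1$; since $\sigma$ sends the first to the second once $\sigma(e^{f-1})=e^{f-1}$ is known, the statement drops out. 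What this buys you is conceptual clarity: the entire $\sigma$-sensitivity is concentrated in the single relation $g_ig_i^*=g_i^*g_i$, and the bookkeeping is pure commutation. The cost is the extra scaffolding of the two auxiliary inductions, whereas the paper's computation, though less transparent, is a single four-line chain of equalities.
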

\begin{proof} By Lemma~\ref{inv},  $\sigma(e^f)=e^f$ if $f=1$. In general,  by induction on $f$, we have $\sigma(e^f)=\sigma(e_f) e^{f-1}$. So, we need to prove  $\sigma(e_f) e^{f-1}= e^{f}$.
In fact, $$\begin{aligned} \sigma(e_f) e^{f-1}&=(g_{1, f}^*)^{-1} g_{f, 1} e_1 g_{1, f}^* g_{f, 1}^{-1} e_1\cdots e_{f-1}\\
&=(g_{1, f}^*)^{-1} g_{f, 1} e_1 g_1^*g_1^{-1}e_1 g_{2, f}^* g_{f, 2}^{-1} e_2\cdots e_{f-1}\\
&=(g_{1, f}^*)^{-1} g_{f, 1} e_1e_2  g_{2, f}^*  g_{f, 2}^{-1} e_2\cdots e_{f-1} \\
&=(g_{1, f}^*)^{-1} g_{f, 1} e_1e_2\cdots e_f=e^f.\\
\end{aligned}$$
We remark that the second, third and forth equalities follow from Lemma~\ref{tool}(a) and (d), and the last equality follows from Lemma~\ref{tool}(f)-(g).
\end{proof}

We define
  \begin{equation}\label{gd}g_d = {g_{f,i_f}} g^*_{f ,j_f}  \cdots
{g_{1, i_1}}g^*_{1,j_1}.\end{equation} for  each  $d\in \mathscr D_{r, s}^f$ if   $d=
s_{f,i_f} s^*_{f,j_f} \cdots s_{1, i_1}s^*_{1,{j_1}}$ with $j_k\ge
k$ and $i_{\ell}< i_{\ell+1}$.

The following result is motivated by Yu's work on cyclotomic Birman-Murakami-Wenzl algebras in \cite{Yu}.

\begin{Prop}\label{tool1} Fix $r, s, f\in \mathbb Z^{>0}$ with  $ f\le \min\{r, s\}$. Let $N_f$ be
the left  ${\mathscr{B}}_{r,s}(f)$-module  generated
by $\bar V_{r, s}^f=\{ e^f{g_d}\mid d \in \mathscr {D}_{r,s}^f\}$, where $g_d$'s  are  given in (\ref{gd}).
Then $N_f$ is a right ${\mathscr{B}}_{r,s}$-module.
\end{Prop}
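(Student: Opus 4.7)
The strategy is to verify directly that $N_f$ is closed under right multiplication by each generator of $\mathscr{B}_{r,s}$. Since the left $\mathscr{B}_{r,s}(f)$-action commutes with right multiplication, it suffices to show that $(e^f g_d) \cdot x \in N_f$ for every $d \in \mathscr{D}_{r,s}^f$ and every $x$ in the generating set $\{e_1\} \cup \{g_i \mid 1 \le i < r\} \cup \{g_j^* \mid 1 \le j < s\}$. Before the case analysis, I would prove the absorption identity $e^f g_k = e^f g_k^*$ for $1 \le k < f$: after decomposing $e^f = (e_1 \cdots e_{k-1})(e_k e_{k+1})(e_{k+2} \cdots e_f)$ and commuting $g_k$ past the outer factors via Lemma~\ref{tool}(a), one applies Lemma~\ref{tool}(e) and (g) to the middle piece to obtain $e_k e_{k+1} g_k = e_{k+1} e_k g_k^* = e_k e_{k+1} g_k^*$, then commutes $g_k^*$ rightward. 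The analogous identity holds for the inverses. These identities encode the fact that the diagonal subgroup $\mathfrak{G}_f$ of Lemma~\ref{rcs} is invisible to $e^f$ from the right.

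For the Hecke generators $x = g_i$ (the case $x = g_j^*$ is symmetric), I would use Lemma~\ref{rcs} to decompose $d \cdot s_i = \sigma d'$ with $\sigma \in \mathfrak{S}_{r-f} \times \mathfrak{G}_f \times \mathfrak{S}_{s-f}$ and $d' \in \mathscr{D}_{r,s}^f$. Lifting to the Hecke algebra gives $g_d g_i = h\, g_{d'}$ modulo a $(q - q^{-1})$-correction when $\ell(d s_i) < \ell(d)$, where $h$ factors as a product of Hecke generators from the three factors of the subgroup. The $\mathfrak{S}_{r-f}$- and $\mathfrak{S}_{s-f}$-parts of $h$ have indices exceeding $f$, so they lie in $\mathscr{B}_{r,s}(f)$ and commute past $e^f$ by Lemma~\ref{tool}(a). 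The $\mathfrak{G}_f$-part is a product of factors $g_k^{\pm 1}(g_k^*)^{\pm 1}$ with $k < f$, each absorbed by $e^f$ via the identity of the previous paragraph: for instance $e^f g_k g_k^* = e^f (g_k^*)^2 = e^f + (q-q^{-1}) e^f g_k^*$ by the quadratic relation, and both resulting terms belong to $N_f$. Collecting, $(e^f g_d) g_i$ is an $\mathscr{B}_{r,s}(f)$-linear combination of elements of $\bar V_{r,s}^f$, so lies in $N_f$.

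The main obstacle is $x = e_1$, which escapes the Hecke framework. My plan is a case analysis on the pair $(i_1, j_1)$ in the expansion $d = s_{f, i_f} s_{f, j_f}^* \cdots s_{1, i_1} s_{1, j_1}^*$. When $i_1 = j_1 = 1$, the trailing factor $g_{1, 1} g_{1, 1}^* = 1$ vanishes and $e_1$ commutes leftward past every remaining generator of $g_d$ (all of whose indices are $\ge 2$) by Definition~\ref{wbmw}(d),(k), collides with $e^f$, and is absorbed by Lemma~\ref{tool}(b). Otherwise, the interaction of $e_1$ with the rightmost factors $g_{1, i_1}$ or $g_{1, j_1}^*$ of $g_d$ is controlled by Definition~\ref{wbmw}(e),(l),(m) and Lemma~\ref{tool}(c)--(f); each rewriting step produces either a shorter element $e^f g_{d''}$ multiplied by something in $\mathscr{B}_{r,s}(f)$, or a loop of the form $\bar e_{i_1, j_1}$ that is reabsorbed into a new $e^f g_{d'}$ whose index tuple has one fewer entry. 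Following Yu's bookkeeping for cyclotomic Birman--Murakami--Wenzl algebras in \cite{Yu}, the hard part is precisely this last case: verifying that each such rewriting lands back in $N_f$ by carefully tracking how the index tuple $(i_1, \ldots, i_f; j_1, \ldots, j_f)$ transforms and that the resulting coefficients indeed lie in $\mathscr{B}_{r,s}(f)$.
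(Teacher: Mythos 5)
Your proposal takes a genuinely different route from the paper, but it has a significant gap precisely in the case you treat as easy. The paper does not verify closure of $N_f$ directly: it introduces the larger left $\mathscr{B}_{r,s}(f)$-module $M_f$ generated by $V_{r,s}^f = \{ e^f g_{f,i_f}g^*_{f,j_f}\cdots g_{1,i_1}g^*_{1,j_1} \mid i_k, j_k\ge k\}$ (\emph{without} the strict-increase condition on the $i_k$'s), proves $M_f$ is a right $\mathscr{B}_{r,s}$-module by handling $f=1$ explicitly and then inducting, and only afterwards shows $M_f=N_f$ by a reordering argument built on \eqref{gf}--\eqref{gf1}. Working with $M_f$ is not a cosmetic choice: it lets one analyze right multiplication without simultaneously tracking the coset-representative normal form.

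The gap in your Hecke-generator case is twofold. First, $\mathfrak{S}_{r-f}\times\mathfrak{G}_f\times\mathfrak{S}_{s-f}$ is \emph{not} a Young (standard parabolic) subgroup of $\mathfrak{S}_r\times\mathfrak{S}_s$ because $\mathfrak{G}_f$ is the diagonal $\mathfrak{S}_f$, not generated by simple reflections of the ambient group. So the factorization $g_{\sigma d'}=g_\sigma g_{d'}$ is not automatic, and minimal-length coset representatives are not unique (for $k<f$, both $s_k$ and $s_k^*$ have length $1$ and lie in the same $\mathfrak{G}_f$-coset), so Lemma~\ref{rcs} alone does not give you the clean Deodhar-type lemma your rewriting relies on. Second, and more fatally, the $\mathfrak{G}_f$-absorption is not clean: as you yourself compute, $e^f g_k g_k^* = e^f + (q-q^{-1})e^f g_k^*$, and you then assert without proof that $e^f g_k^*\in N_f$ for $k<f$. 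This is not obvious: $g_k^*$ with $k<f$ is not a generator of $\mathscr{B}_{r,s}(f)$, and $s_k^*$ is not of the normal form required by $\mathscr{D}_{r,s}^f$ (it violates $j_\ell\ge\ell$). The element $e^f g_k^*$ is, however, visibly one of the generators of the paper's $M_f$, and showing it lies in $N_f$ is precisely an instance of the paper's inclusion $M_f\subseteq N_f$, which uses \eqref{gf}--\eqref{gf1}. So your argument secretly needs the paper's reordering lemma as a prerequisite for the Hecke-generator case, not just for the $e_1$ case that you flag as hard and leave unfinished; without it, the proof does not close.
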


\begin{proof}  We claim  $M_f$ is a right $\mathcal
{{\mathscr{B}}}_{r,s}$-module, where  $M_f$ is the left
${\mathscr{B}}_{r, s}(f)$-module generated by
$V_{r,s}^f=\{ e^f{g_{f,i_f}}g^*_{f,j_f} \cdots {g_{1,
i_1}}g^*_{1,j_1}\mid i_k, j_k \ge k, 1\le k\le f\}$.

First, we assume  $f=1$. By  Definition~\ref{wbmw}(h), and
Lemma~\ref{tool}(a), (c)-(d), we have $$ e_1
{g_{1,i_1}}g_{1,j_1}^*e_1=(e_2e_1+\rho(q-q^{-1})  e_1)g_{2,
i_1}g_{2, j_1}^* \text{ for  $i_1>1$  and  $j_1>1$.} $$
 Also, we have $e_1{g_{1,i_1}}{e_1}=\rho e_1 g_{2, i_1}$ and
$e_1g_{1,j_1}^*{e_1}=\rho e_1  g_{2, j_1}^*$.
In any case, $M_1$ is stable under the action of $e_1$.
By Definition~\ref{wbmw}, it is easy to  verify that $M_1$ is stable
under the actions of $g_i$'s and $g^*_j$'s. So, $M_1$ is a right
$\mathcal {{\mathscr{B}}}_{r,s}$-module, proving our result for
$f=1$.  Using the result for $f=1$ repeatedly yields the result for
general $f$.

By definition,  $N_f\subseteq  M_f$.  So, our result follows if  $M_f\subseteq
N_f$. We prove it by induction on $f$. The case $f=1$ is trivial since
 $M_1=N_1$. In general, by induction
assumptions on both $f-1$ and $f=1$, we have
$$ {\mathscr{B}}_{r,s}(f) V_{r,s}^f \subseteq
\sum_ {i_f, j_f\ge f}
{\mathscr{B}}_{r,s}(f) e_fg_{f,i_f}g_{f,j_f}^*\bar{V}_{r,s}^{f-1}.$$
So, $M_f\subseteq N_f$ if  $e_fg_{f,i_f}g_{f,j_f}^* e^{f-1} g_d\in
N_f$, for any $e^{f-1} g_d \in \bar{V}_{r,s}^{f-1}$.

Write $g_d=g_{f-1, i_{f-1}} g_{f-1, j_{f-1}}^*\cdots g_{1,
 i_1}g_{1, j_1}^*$ with $i_1<\cdots <i_{f-1}$. If $i_f>i_{f-1}$, there is nothing to be proved.
 So, we assume  $i_f\leq i_{f-1}$. By Lemma~\ref{tool}(e) and Definition~\ref{wbmw},
we have  \begin{equation} \label{gf}
e_fg_{f,i_f}g_{f,j_f}^*e_{f-1}g_{f-1,i_{f-1}}g_{f-1,j_{f-1}}^*
=e_fe_{f-1}g_{f,i_{f-1}}g_{f-1,i_f-1}g_{f-1,j_f}^*g_{f-1,j_{f-1}}^*,\end{equation}
and \begin{equation} \label{gf1}
g_{f-1,j_f}^*g_{f-1,j_{f-1}}^*=\begin{cases}
g_{f,j_{f-1}}^*g_{f-1,j_f-1}^*+(q-q^{-1})g_{f,j_f}^*g_{f-1,j_{f-1}}^*, &\text{ if $j_{f-1}\ge j_f,$}\\
g^*_{f,j_{f-1}+1}g^*_{f-1,j_f}, &\text{ if $j_{f-1}< j_f$}.
\\ \end{cases}\end{equation}
Applying the previous arguments repeatedly yields    $e_fg_{f,i_f}g_{f,j_f}^* e^{f-1} g_d\in \bar V_{r, s}^f$.\end{proof}

The following result can be considered as the left version of Proposition~\ref{tool1}.

\begin{Cor}\label{tool2} Fix $r, s, f\in \mathbb Z^{>0}$ with $ f\le \min\{r, s\}$.  Let $N_f$ be the
right $ {\mathscr{B}}_{r,s}(f)$-module  generated by
$\{\sigma(g_d) e^f\mid d \in\mathscr {D}_{r,s}^f\}$. Then $N_f$ is a
left $ {\mathscr{B}}_{r,s}$-module.
\end{Cor}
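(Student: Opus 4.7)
The strategy is to deduce the corollary by applying the anti-involution $\sigma$ of Lemma~\ref{inv} to Proposition~\ref{tool1}. Writing $M$ for the right $\mathscr{B}_{r,s}(f)$-module generated by $\{\sigma(g_d) e^f \mid d\in\mathscr{D}_{r,s}^f\}$ and $N_f$ for the bimodule of Proposition~\ref{tool1}, I will show that $\sigma(N_f)=M$ as an $R$-submodule of $\mathscr{B}_{r,s}$, and that the $\mathscr{B}_{r,s}(f)$-$\mathscr{B}_{r,s}$ bimodule structure of $N_f$ translates, via $\sigma$, into the desired $\mathscr{B}_{r,s}$-$\mathscr{B}_{r,s}(f)$ bimodule structure on $M$.

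First I would verify two compatibility statements for $\sigma$. The fact that $\sigma(e^f)=e^f$ is exactly Lemma~\ref{fixef}. For the claim that $\sigma$ restricts to an anti-involution on $\mathscr{B}_{r,s}(f)$, note that $\sigma$ fixes each generator $g_i,g_j^{*}$ with $i,j\ge f+1$, so it only remains to check $\sigma(e_{f+1})=e_{f+1}$. From $e_{f+1}=g_{1,f+1}^{-1}g_{f+1,1}^{*}e_1 g_{1,f+1}(g_{f+1,1}^{*})^{-1}$ and the identities $\sigma(g_{1,f+1})=g_{f+1,1}$, $\sigma(g_{f+1,1}^{*})=g_{1,f+1}^{*}$, which follow since $\sigma$ reverses products and fixes each $g_i,g_i^{*}$, one obtains $\sigma(e_{f+1})=e_{f+1}$. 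Hence $\sigma(\mathscr{B}_{r,s}(f))=\mathscr{B}_{r,s}(f)$.

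Next, apply $\sigma$ to a typical element $h\cdot e^f g_d\cdot h'$ of $N_f$ (with $h\in\mathscr{B}_{r,s}(f)$, $h'\in\mathscr{B}_{r,s}$). Because $\sigma$ is an anti-involution and fixes $e^f$, one computes
\begin{equation*}
\sigma(h\,e^f g_d\,h')=\sigma(h')\,\sigma(g_d)\,e^f\,\sigma(h),
\end{equation*}
so as $h,h'$ vary, $\sigma(N_f)$ is precisely the $R$-span of $\mathscr{B}_{r,s}\cdot\sigma(g_d)\,e^f\cdot\mathscr{B}_{r,s}(f)$. Proposition~\ref{tool1} tells us $N_f$ is already a bimodule, so $\sigma(N_f)$ agrees with the right $\mathscr{B}_{r,s}(f)$-module $M$ generated by $\{\sigma(g_d)e^f\}$. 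But $\sigma(N_f)$ is also stable under left multiplication by $\mathscr{B}_{r,s}$, which is exactly the claim of the corollary.

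The only genuine content beyond Proposition~\ref{tool1} is the verification that $\sigma$ stabilises the subalgebra $\mathscr{B}_{r,s}(f)$; everything else is bookkeeping about how an anti-involution swaps left and right module structures. I do not anticipate a serious obstacle, since the heavy combinatorial work was already carried out in the proof of Proposition~\ref{tool1}.
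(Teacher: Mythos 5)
Your overall strategy — apply the anti-involution $\sigma$ to Proposition~\ref{tool1} — is the intended one, and the paper itself treats the corollary as "the left version" of Proposition~\ref{tool1}, so you are taking essentially the same route. However, there is a genuine gap in your justification that $\sigma$ stabilises $\mathscr{B}_{r,s}(f)$. You claim that from $e_{f+1}=g_{1,f+1}^{-1}g_{f+1,1}^{*}e_1 g_{1,f+1}(g_{f+1,1}^{*})^{-1}$ together with $\sigma(g_{1,f+1})=g_{f+1,1}$ and $\sigma(g_{f+1,1}^{*})=g_{1,f+1}^{*}$ "one obtains $\sigma(e_{f+1})=e_{f+1}$." But applying $\sigma$ literally gives
\[
\sigma(e_{f+1})=(g_{1,f+1}^{*})^{-1}g_{f+1,1}\,e_1\,g_{1,f+1}^{*}g_{f+1,1}^{-1},
\]
which is visibly \emph{not} the same expression as $e_{f+1}=g_{1,f+1}^{-1}g_{f+1,1}^{*}e_1 g_{1,f+1}(g_{f+1,1}^{*})^{-1}$. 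Already for $f=1$, writing $u=g_1(g_1^{*})^{-1}$, one has $e_2=u^{-1}e_1u$ and $\sigma(e_2)=ue_1u^{-1}$, so $\sigma(e_2)=e_2$ is equivalent to the nontrivial assertion that $g_1^{2}(g_1^{*})^{-2}$ commutes with $e_1$ — this is not an instance of merely "reversing products and fixing generators" and would need its own argument using the defining relations (e.g.\ (m), (n)).

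Fortunately you do not actually need $\sigma(e_{f+1})=e_{f+1}$; what you need is the weaker statement $e^{f}\,\sigma(\mathscr{B}_{r,s}(f))=e^{f}\,\mathscr{B}_{r,s}(f)$, and this \emph{is} accessible with the tools at hand. By Lemma~\ref{tool}(a),(g), $e^{f}$ commutes with every generator of $\mathscr{B}_{r,s}(f)$, hence with $\mathscr{B}_{r,s}(f)$, and applying $\sigma$ shows $e^f$ also commutes with $\sigma(\mathscr{B}_{r,s}(f))$. Moreover, Lemma~\ref{fixef} gives $\sigma(e^{f+1})=e^{f+1}$, i.e.\ $\sigma(e_{f+1})e^{f}=e^{f+1}=e^{f}e_{f+1}$, so $e^{f}\sigma(e_{f+1})=e^{f}e_{f+1}$. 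An easy induction on word-length in the generators $e_{f+1},g_i,g_j^{*}$ (pulling $e^f$ across using the commutation) then shows that $e^{f}x$ for $x\in\mathscr{B}_{r,s}(f)$ can always be rewritten as $e^{f}y$ with $y\in\sigma(\mathscr{B}_{r,s}(f))$, and conversely. With that repair, your computation $\sigma(N_f^{\mathrm{Prop}})=\{\sigma(g_d)e^{f}\}\,\sigma(\mathscr{B}_{r,s}(f))=\{\sigma(g_d)e^{f}\}\,\mathscr{B}_{r,s}(f)=N_f^{\mathrm{Cor}}$ goes through, and the stability under left multiplication by $\mathscr{B}_{r,s}$ follows by applying $\sigma$ to the right $\mathscr{B}_{r,s}$-action in Proposition~\ref{tool1}, exactly as you say.
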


\begin{Lemma}\label{quo1}
Let $I$ be the two-sided ideal of ${\mathscr{B}}_{r,s}(f)$
generated by $e_{f+1}$. Then ${\mathscr{B}}_{r,s}(f)/I\cong  \H_{r-f}\otimes  \H_{s-f}$. \end{Lemma}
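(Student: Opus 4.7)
The plan is to use Lemma~\ref{subiso1} to identify $\mathscr{B}_{r,s}(f)$ with $\mathscr{B}_{r-f,s-f}$; under this isomorphism $e_{f+1}$ corresponds to $e_1$, so $I$ corresponds to the two-sided ideal $J\subset \mathscr{B}_{r-f,s-f}$ generated by $e_1$. It therefore suffices to establish that $\mathscr{B}_{r-f,s-f}/J\cong \H_{r-f}\otimes\H_{s-f}$, which I would do by exhibiting mutually inverse algebra homomorphisms.

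First I would define $\phi\colon \H_{r-f}\otimes\H_{s-f}\longrightarrow \mathscr{B}_{r-f,s-f}/J$ on generators by $g_i\otimes 1\mapsto g_i+J$ and $1\otimes g_j^*\mapsto g_j^*+J$. The defining relations of the tensor product of the two Hecke algebras are precisely relations (a)--(c), (g), and (h)--(j) of Definition~\ref{wbmw}, all of which already hold in $\mathscr{B}_{r-f,s-f}$; hence $\phi$ is a well-defined algebra homomorphism. It is surjective because $e_1+J=0$ forces the quotient to be generated by the images of the $g_i$'s and $g_j^*$'s.

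For the inverse, I would define $\psi\colon \mathscr{B}_{r-f,s-f}\longrightarrow \H_{r-f}\otimes\H_{s-f}$ by $g_i\mapsto g_i\otimes 1$, $g_j^*\mapsto 1\otimes g_j^*$ and $e_1\mapsto 0$, and check all fifteen relations of Definition~\ref{wbmw}. Relations (a)--(c) and (h)--(j) reduce to the Hecke relations inside each tensor factor, relation (g) becomes the tautological tensor commutativity, while relations (d)--(f) and (k)--(o) each have $e_1$ as a factor on both sides and therefore collapse to $0=0$. Hence $\psi$ is a well-defined algebra homomorphism with $\psi(e_1)=0$, so it factors through $J$ to give $\bar\psi\colon \mathscr{B}_{r-f,s-f}/J\to \H_{r-f}\otimes\H_{s-f}$.

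Finally, $\phi$ and $\bar\psi$ are inverse to each other on generators, so they are mutually inverse isomorphisms. There is no substantive obstacle here; the only point requiring care is the systematic verification that, in every relation of Definition~\ref{wbmw} involving $e_1$, the element $e_1$ appears as a factor on both sides, so that $\psi$ sends each such relation to the trivial identity $0=0$.
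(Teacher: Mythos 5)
Your argument is correct and is exactly the ``straightforward verification'' the paper invokes: reduce to $\mathscr{B}_{r-f,s-f}$ via Lemma~\ref{subiso1}, then build mutually inverse maps by observing that every relation in Definition~\ref{wbmw} that mentions $e_1$ has $e_1$ as a factor on both sides. The only cosmetic slip is that you refer to ``fifteen relations'' and to ``(k)--(o),'' whereas Definition~\ref{wbmw} lists relations (a)--(n), so the $e_1$-bearing ones are (d)--(f) and (k)--(n); this does not affect the substance.
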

\begin{proof}Straightforward verification.\end{proof}

\section{A cellular basis of $\mathscr B_{r, s}$}
The aim of this section is to give  a cellular basis of $\mathscr B_{r, s}$ in  Theorem~\ref{celb}.
We remark that  Enyang~\cite{Enyang2} has shown that arbitrary cellular bases for Hecke algebras associated to symmetric groups can be lifted
to cellular bases of the quantized walled Brauer algebras. In this sense, the cellular basis of $\mathscr B_{r, s}$ given in  Theorem~\ref{celb} can be obtained from~\cite[Theorem~6.13]{Enyang2}.


\begin{Defn}\cite{GL}\label{GL}
    Let  $A$ be  an $R$--algebra, where  $R$ is a commutative ring
containing the multiplicative identity $1$.
    Fix a partially ordered set $\Lambda=(\Lambda,\gedom)$ and for each
    $\lambda\in\Lambda$ let $T(\lambda)$ be a finite set. Finally,
    fix $C_{\s\t}\in A$ for all
    $\lambda\in\Lambda$ and $\s,\t\in T(\lambda)$.

    Then the triple $(\Lambda,T,C)$ is a \textsf{cell datum} for $A$ if:
    \begin{enumerate}
    \item $\set{C_{\s\t}|\lambda\in\Lambda\text{ and }\s,\t\in
        T(\lambda)}$ is an $R$--basis for $A$;
    \item the $R$--linear map $*\map AA$ determined by
        $(C_{\s\t})^*=C_{\t\s}$, for all
        $\lambda\in\Lambda$ and all $\s,\t\in T(\lambda)$ is an
        anti--involution of $A$;
    \item for all $\lambda\in\Lambda$, $\s\in T(\lambda)$ and $a\in A$
        there exist scalars $r_{\t\u}(a)\in R$ such that
        $$C_{\s\t} a
            =\sum_{\u\in T(\lambda)}r_{\t\u}(a)C_{\s\u}
                     \pmod{A^{\gdom\lambda}},$$
            where
    $A^{\gdom\lambda}=R\text{--span}%
      \set{C_{\u\v}|\mu\gdom\lambda\text{ and }\u,\v\in T(\mu)}$.
     Furthermore, each scalar $r_{\t\u}(a)$ is independent of $\s$. \end{enumerate}
     An algebra $A$ is a \textsf{cellular algebra} if it has
    a cell datum. We  call
    $\set{C_{\s\t}|\s,\t\in T(\lambda), \lambda\in\Lambda}$
    a \textsf{cellular basis} of $A$.
\end{Defn}

Unless otherwise stated, we always  consider right $A$-modules. Via anti-involution in Definition~\ref{GL}, all right $A$-modules can be considered as left modules.
For each
$\lambda\in\Lambda$ fix $\t \in T(\lambda)$ and let $C_{\s}
       =C_{\t\s}+ A^{\rhd \lambda}$.
       The right cell module $C(\lambda)$ of $A$ with respect to $\lambda\in \Lambda$ can be
       considered as  the free $R$--modules with basis $\set{C_{\s}|\s\in
T(\lambda)}$. Further, for any $a\in A$,  $$ C_{\s} \cdot a
=\sum_{\u\in T(\lambda)}r_{\s\u}(a)C_{\u}
                     $$ where the scalars   $r_{\s\u}(a)$ are
                     determined by Definition~\ref{GL}(c).
Similarly, we have the left cell modules of $A$.

 Before we construct a
cellular basis of $\mathscr B_{r, s}$, we need the Murphy basis for
$\H_n$, which is a cellular basis in the sense of \cite{GL}. First,
we recall some combinatorics.

A composition $\lambda$ of $n$ with at most  $d$ parts is a sequence
of non--negative integers $\lambda=(\lambda_1,\lambda_2,\dots,
\lambda_d)$ such that $|\lambda|:=\sum_{i=1}^d\lambda_i=n$.  If
$\lambda_i\ge \lambda_{i+1}$,  $ 1\le i\le d-1$, then $\lambda$ is
called a partition of $n$ with at most $d$ parts. Let $\Lambda(d,
n)$ (resp. $\Lambda^+(d, n)$)  be the set of all compositions (resp.
partitions)  of $n$ with at most $d$ parts.  We also use
$\Lambda^+(n)$ to denote the set of all partitions of $n$. It is
known that $\Lambda^+(d, n)$ is the poset with dominance order
$\trianglelefteq$ as the partial order on it. More explicitly, $
\lambda\trianglelefteq \mu$  for $\lambda, \mu\in \Lambda^+(d, n)$
if $ \sum_{j=1}^i \lambda_j\le \sum_{j=1}^i \mu_j$ for all possible
$i\le d$. Write $\lambda\vartriangleleft \mu$ if
$\lambda\trianglelefteq \mu$ and $\lambda\ne \mu$.

Let $\lambda=(\lambda_1, \lambda_2...)\in \Lambda^+(n)$.  The Young diagram $[\lambda]$ is a
collection of boxes (or nodes) arranged in left-justified rows with $\lambda_i$
boxes in the $i$-th row of $[\lambda]$. We use $(i, j)$ to denote the box $p$ if  $p$ is in $i$-th row and $j$-th column.
A box $(i, \lambda_i)$ (resp.,  $(i, \lambda_i+1)$)  is called a removable (resp., addable ) node of $\lambda$ (or $[\lambda]$)
 if $\lambda_{i}-1\ge \lambda_{i+1}$ (resp. $\lambda_{i-1}\ge \lambda_{i}+1$).   Let $\mathscr R(\lambda)$ (resp., $\mathscr A(\lambda)$) be the
 set of all removable (resp., addable ) boxes of $\lambda$.

 A $\lambda$-tableau $\s$ is
obtained by inserting $i, 1\le i\le n$ into $[\lambda]$ without
repetition. A $\lambda$-tableau $\s$ is said to be standard if the
entries in $\s$ are increasing both from left to right in each row
and from top to bottom in each column. Let $\Std(\lambda)$ be the
set of all standard $\lambda$-tableaux.

The symmetric group  $\mathfrak S_n $  acts on a $\lambda$-tableau
$\s$ by permuting its entries. Let $\t^\lambda$ (resp.
$\t_{\lambda}$) be the $\lambda$-tableau obtained from the Young
diagram $[\lambda]$ by adding $1, 2, \cdots, n$ from left to right
along the rows (resp. from top to bottom along the columns). For
example, if $\lambda=(4,3,1)$, then
\begin{equation}\label{tla}
\t^{\lambda}=\young(1234,567,8), \quad \text{ and }
\t_{\lambda}=\young(1468,257,3).\end{equation}
We write $w=d(\s)$ if $\t^\lambda w=\s$. Then  $d(\s)$ is
uniquely determined by $\s$.

Let $\mathcal Z=\mathbb Z[q, q^{-1}]$.   It is known that $\{g_w\mid
w\in \mathfrak S_n\}$ is a $\mathcal Z$--basis of $\H_n$, where
$g_w=g_{i_1} \cdots g_{i_k}$ if $w=s_{i_1}\cdots s_{i_k}$ with
minimal $k$ which is called the length of $w$. Such an expression is
called a reduced expression of $w$. Further, it is well known that
$g_w$ is independent of a reduced expression of $w$.

Given a $\lambda\in \Lambda^+(n)$, let $\mathfrak S_\lambda$ be the
row stabilizer of $\t^\lambda$. Then $\mathfrak S_\lambda$ is the
Young  subgroup of $\mathfrak S_n$ with respect to $\lambda$. Let
\begin{equation}\label{xyla}
\m_\lambda=\sum_{w\in \mathfrak S_\lambda} q^{\ell(w)} g_w, \text{
and } \n_\lambda=\sum_{w\in \mathfrak S_\lambda} (-q)^{-\ell(w)}
g_w,\end{equation} where $\ell(w)$ is the length of $w$. It is well
known that
 \begin{equation}\label{sgnindex} \m_\lambda g_i=q \m_\lambda,
\quad \text{and}\quad  \n_\lambda g_i=-q^{-1} \n_\lambda, \forall
s_i\in \mathfrak S_\lambda\end{equation}

For any $\lambda\in \Lambda^+(n)$, the classical Specht module
$S_\lambda$ is $\m_\lambda g_{d(\t_\lambda)} \n_{\lambda'} \H_n$
where $\lambda'$ is the conjugate of $\lambda$. The following
result  is well known.

\begin{Prop}\cite{DJ1} \label{classsp} Suppose $\lambda\in \Lambda^+(n)$. Then $\{\m_\lambda g_{d({\t_\lambda})} \n_{\lambda'} g_{d(\t)}\mid \t\in
\Std(\lambda')\}$ is a $\mathcal Z$-basis of  $S_\lambda$.
\end{Prop}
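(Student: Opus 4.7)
The plan is to invoke the Dipper--James cellular theory of $\H_n$, which provides two dual cellular bases indexed by pairs of standard tableaux: the Murphy $\m$-basis and its antisymmetric counterpart built from the $\n_\mu$. I would first establish spanning, then linear independence; the cardinalities match via $|\Std(\lambda')|=|\Std(\lambda)|$.

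For spanning, an arbitrary element $\m_\lambda g_{d(\t_\lambda)}\n_{\lambda'} h$ of $S_\lambda$ is reduced by writing $h=\sum_w a_w g_w$ and factoring each $w=w_1 d$ via the distinguished right coset representatives of $\mathfrak S_{\lambda'}$ in $\mathfrak S_n$, with $w_1\in \mathfrak S_{\lambda'}$. Relation (\ref{sgnindex}) absorbs the $w_1$-part into the scalar $(-q)^{-\ell(w_1)}$, and the remaining cosets are indexed by row-standard $\lambda'$-tableaux. Iterating column Garnir relations for the antisymmetrizer $\n_{\lambda'}$ further reduces these to standard $\lambda'$-tableaux, yielding the claimed spanning set.

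For linear independence, I would identify $\m_\lambda g_{d(\t_\lambda)}\n_{\lambda'}$, up to a unit in $\mathcal Z$, with the \emph{top} element of the dual $\n$-cellular basis of $\H_n$ at shape $\lambda'$. The key tableau-combinatorial fact is that $d(\t_\lambda)$ is the permutation transferring the row reading of $[\lambda]$ to its column reading, precisely the bridge needed between $\m_\lambda$ at $\lambda$ and $\n_{\lambda'}$ at $\lambda'$. Right multiplication by $g_{d(\t)}$ then sweeps out cellular basis elements indexed by $\t\in\Std(\lambda')$, which are $\mathcal Z$-linearly independent modulo $\H_n^{\gdom\lambda'}$; this forces independence in $S_\lambda$.

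The main obstacle is the tableau-combinatorial bridge between $\m_\lambda g_{d(\t_\lambda)}\n_{\lambda'}$ and a nonzero cellular basis element at shape $\lambda'$; once this identification is set up, both spanning and independence follow from the cell-datum axioms together with standard Garnir-type manipulations, and indeed this is where one would appeal to \cite{DJ1} for the detailed combinatorial identities.
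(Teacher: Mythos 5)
The paper does not prove Proposition~\ref{classsp}; it is stated as ``well known'' and attributed to~\cite{DJ1}, so there is no in-paper argument to compare against. Your sketch is a faithful outline of the standard Dipper--James/Murphy strategy: spanning by decomposing $w=w_1 d$ over distinguished right coset representatives of $\mathfrak S_{\lambda'}$ in $\mathfrak S_n$, absorbing $g_{w_1}$ into $(-q)^{-\ell(w_1)}$ via~\eqref{sgnindex}, and then using Garnir-type relations to pass from row-standard $\lambda'$-tableaux to $\Std(\lambda')$; independence by a leading-term or cell-module comparison. Two small points of imprecision are worth flagging. First, $\m_\lambda g_{d(\t_\lambda)}\n_{\lambda'}$ is not literally a unit multiple of the top cellular element $\n_{\lambda'}=\n_{\t^{\lambda'}\t^{\lambda'}}$; what is actually true (and is the substantive content of \cite{DJ1}, cf.\ \cite[Thm.~4.14]{Ma}) is that it lies in $\H_n^{\gedom\lambda'}$ and, expanded in the $g_w$-basis, has $g_{d(\t_\lambda)}$ as its Bruhat-minimal term with unit coefficient; it is this leading-coefficient statement, combined with $\ell(d(\t_\lambda)d(\t))=\ell(d(\t_\lambda))+\ell(d(\t))$ for $\t\in\Std(\lambda')$, that forces independence of the $z_\lambda g_{d(\t)}$. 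Second, the Garnir reduction to standard tableaux is not a feature of $\n_{\lambda'}\H_n$ alone (there the $\n_{\lambda'}g_d$ over all row-standard $d$ are already independent); it is the left factor $\m_\lambda g_{d(\t_\lambda)}$ that creates the relations allowing non-standard indices to be rewritten. You correctly flag both of these as the places where one must invoke \cite{DJ1}, so the proposal is accurate as an outline while appropriately deferring the technical tableau combinatorics to the cited source.
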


In \cite{Mur}, Murphy constructed a $\mathcal Z$-basis of $\H_n$,
called Murphy basis. It is a cellular basis of $\H_n$ over $\mathcal
Z$. In the current paper, we use $\n_\lambda$ instead of
$\m_\lambda$ in his construction. The following result follows from Murphy's work in \cite{Mur}.

\begin{Theorem}\cite{Mur}\label{cellh} Let  $\H_n$  be defined over $\mathcal Z$. Then
$\{\n_{\s\t}\mid \s, \t\in \Std(\lambda), \lambda\in \Lambda^+(n)\}$
is a cellular basis of $\H_n$  where $\n_{\s \t}=g_{d(\s)^{-1}}
\n_\lambda g_{d(\t)}$.
\end{Theorem}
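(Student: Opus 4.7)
The strategy is to deduce this from Murphy's original theorem \cite{Mur}, which gives the analogous statement with $\m_\lambda=\sum_{w\in\Sym_\lambda}q^{\ell(w)}g_w$ in place of $\n_\lambda$. The bridge between the two versions is the unique $\mathbb Z$-algebra automorphism $\psi\colon\H_n\to\H_n$ that fixes every generator $g_i$ and is semi-linear over the ring automorphism $q\mapsto -q^{-1}$ of $\mathcal Z$. I would first check that $\psi$ is well defined: the braid relations are $q$-free and hence preserved, while the quadratic relation $(g_i-q)(g_i+q^{-1})=0$ becomes the identical relation $(g_i+q^{-1})(g_i-q)=0$ under the substitution.

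Since $\psi$ fixes every $g_w$, applying it term-by-term to the sum \eqref{xyla} gives $\psi(\m_\lambda)=\n_\lambda$, and therefore $\psi(\m_{\s\t})=\n_{\s\t}$ for all $\lambda,\s,\t$. Being an algebra automorphism, $\psi$ sends $\mathcal Z$-bases to $\mathcal Z$-bases (up to the ring automorphism, which does not affect linear independence or spanning), it commutes with the anti-involution $g_w\mapsto g_{w^{-1}}$ of $\H_n$ (both maps are the identity on generators), and it sends the ideal spanned by $\{\m_{\u\v}\mid \mu\gdom\lambda\}$ onto the ideal $\H_n^{\gdom\lambda}$ spanned by $\{\n_{\u\v}\mid \mu\gdom\lambda\}$. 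Consequently all three cellularity axioms of Definition~\ref{GL} transfer from Murphy's $\m$-basis to the $\n$-basis, yielding the theorem.

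The only substantive step on my side is the well-definedness of $\psi$, which is immediate from the observation above. The real work is hidden in Murphy's original theorem, in particular the straightening lemma showing that any product $\m_\lambda g_w$ can be rewritten modulo $\H_n^{\gdom\lambda}$ as a $\mathcal Z$-combination of the $\m_\lambda g_{d(\t)}$ with $\t\in\Std(\lambda)$. The main advantage of the approach outlined here is that it avoids re-doing this straightening for $\n_\lambda$, outsourcing it to \cite{Mur} through $\psi$; the corresponding scalars $r_{\t\u}(a)$ in the multiplication rule for $\{\n_{\s\t}\}$ are then just the $\psi$-images of those appearing in Murphy's rule for $\{\m_{\s\t}\}$.
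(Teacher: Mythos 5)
The paper gives no proof of this statement: it simply remarks that ``we use $\n_\lambda$ instead of $\m_\lambda$ in his construction'' and attributes the result to \cite{Mur}. Your proposal supplies the missing reduction, and it is correct. The semi-linear automorphism $\psi$ you describe is well-defined (the quadratic relation $(g_i-q)(g_i+q^{-1})=0$ is fixed by $q\mapsto -q^{-1}$ up to a harmless commutation of the two factors, and the braid relations are $q$-free), it is its own inverse, and a short computation shows $\psi(\m_\lambda)=\sum_{w\in\mathfrak S_\lambda}(-q^{-1})^{\ell(w)}g_w=\n_\lambda$ and hence $\psi(\m_{\s\t})=\n_{\s\t}$. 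Since $\psi$ is a ring automorphism that commutes with the $*$-anti-involution $g_w\mapsto g_{w^{-1}}$ and carries the $\mathcal Z$-span of $\{\m_{\u\v}\mid\mu\gdom\lambda\}$ onto the $\mathcal Z$-span of $\{\n_{\u\v}\mid\mu\gdom\lambda\}$, all three axioms of Definition~\ref{GL} transfer; the structure constants for the $\n$-basis are the $q\mapsto -q^{-1}$ twists of Murphy's $r_{\t\u}(\psi(b))$, as you say. In short, this is a clean way to make rigorous what the paper states by citation; the only alternative route I can see (and which the phrase ``we use $\n_\lambda$ instead of $\m_\lambda$ in his construction'' could equally well be read as suggesting) is to re-run Murphy's straightening argument verbatim with the sign-twisted eigencondition $\n_\lambda g_i=-q^{-1}\n_\lambda$ in place of $\m_\lambda g_i=q\m_\lambda$, which is more work for the same conclusion. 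Your approach buys you the theorem with no re-derivation at the cost of a two-line check of well-definedness of $\psi$.
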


 For
each $\lambda\in \Lambda^+(n)$, let $C(\lambda)$ be the cell
module of $\H_n$ with respect to this cellular basis. The following
result is well known.

\begin{Prop}\label{classspe} For each $\lambda\in \Lambda^+(n)$, $S_\lambda\cong
C(\lambda')$ where $\lambda'$ is the conjugate of
$\lambda$.\end{Prop}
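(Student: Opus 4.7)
The plan is to write down an explicit right $\H_n$-module map $\Phi\map{C(\lambda')}{S_\lambda}$ by matching bases, and to reduce the $\H_n$-linearity of $\Phi$ to a classical annihilation statement. First, I would unpack both sides in terms of $\Std(\lambda')$. Since $d(\t^{\lambda'})=1$, the cell module construction from Theorem~\ref{cellh} equips $C(\lambda')$ with the $R$-basis $\{\n_{\lambda'}g_{d(\t)}+\H_n^{\gdom\lambda'}\mid \t\in\Std(\lambda')\}$, while Proposition~\ref{classsp} equips $S_\lambda=\m_\lambda g_{d(\t_\lambda)}\n_{\lambda'}\H_n$ with the $R$-basis $\{\m_\lambda g_{d(\t_\lambda)}\n_{\lambda'}g_{d(\t)}\mid \t\in\Std(\lambda')\}$. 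I then define $\Phi$ on basis elements by $\n_{\lambda'}g_{d(\t)}+\H_n^{\gdom\lambda'}\mapsto\m_\lambda g_{d(\t_\lambda)}\n_{\lambda'}g_{d(\t)}$, which is manifestly an $R$-linear bijection.

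The remaining content is that $\Phi$ is right $\H_n$-linear. Given $h\in\H_n$, writing $\n_{\lambda'}g_{d(\t)}h\equiv\sum_{\u}r_{\t\u}(h)\n_{\lambda'}g_{d(\u)}\pmod{\H_n^{\gdom\lambda'}}$, the identity $\Phi(C_\t\cdot h)=\Phi(C_\t)\cdot h$ is equivalent to saying that left multiplication by $\m_\lambda g_{d(\t_\lambda)}$ annihilates $\H_n^{\gdom\lambda'}$. Since $\H_n^{\gdom\lambda'}$ is $R$-spanned by the Murphy elements $g_{d(\s)^{-1}}\n_\mu g_{d(\u)}$ with $\mu\gdom\lambda'$, the whole assertion reduces to the vanishing
\[\m_\lambda\H_n\n_\mu=0\qquad\text{for every }\mu\gdom\lambda'.\]

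The main obstacle is this vanishing, which I would prove by a standard Garnir / double-coset argument. Expanding an arbitrary $h\in\H_n$ in the basis indexed by minimal $(\fS_\lambda,\fS_\mu)$-double coset representatives and absorbing $\fS_\lambda$-factors on the left via $\m_\lambda g_{s_i}=q\m_\lambda$ and $\fS_\mu$-factors on the right via $g_{s_j}\n_\mu=-q^{-1}\n_\mu$, the claim collapses to showing $\m_\lambda g_w\n_\mu=0$ for each such $w$. The strict dominance $\mu\vartriangleright\lambda'$ forces $\fS_\lambda\cap w\fS_\mu w^{-1}\neq\{1\}$ by the classical pigeonhole (no injection of the rows of $\t^\lambda$ into distinct rows of $\t^\mu$ can exist once $\mu\ntrianglelefteq\lambda'$). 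Picking any non-identity $\sigma=w\tau w^{-1}$ in the intersection, minimality of $w$ yields $\ell(\sigma w)=\ell(\sigma)+\ell(w)=\ell(w)+\ell(\tau)=\ell(w\tau)$, hence $g_\sigma g_w=g_wg_\tau$ inside $\H_n$, and applying $\m_\lambda g_\sigma=q^{\ell(\sigma)}\m_\lambda$ and $g_\tau\n_\mu=(-q^{-1})^{\ell(\tau)}\n_\mu$ to the two sides of $\m_\lambda g_\sigma g_w\n_\mu$ produces
\[\bigl(q^{\ell(\sigma)}-(-q^{-1})^{\ell(\tau)}\bigr)\m_\lambda g_w\n_\mu=0.\]
Since $\ell(\sigma)=\ell(\tau)\geq 1$ the scalar is a nonzero element of the integral domain $R$, forcing $\m_\lambda g_w\n_\mu=0$. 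This vanishing is the Hecke-algebra version of James's classical kernel-intersection theorem and is stated in \cite{Mur} and \cite{DJ1}, so one may alternatively simply invoke it.
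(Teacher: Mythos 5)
Your argument is correct, and it fills in a step that the paper itself only cites as well known (in the tradition of \cite{DJ1,Mur}) without supplying a proof. The reduction is sound: $d(\t^{\lambda'})=1$ identifies the cell basis with $\{\n_{\lambda'}g_{d(\t)}\}$, Proposition~\ref{classsp} gives the matching basis of $S_\lambda$, and $\H_n$-equivariance of $\Phi$ does come down to $\m_\lambda g_{d(\t_\lambda)}\H_n^{\gdom\lambda'}=0$, which you correctly further reduce to $\m_\lambda\H_n\n_\mu=0$ for $\mu\gdom\lambda'$. The double-coset calculation is the standard Dipper--James/Murphy one and is carried out correctly: for a distinguished $(\fS_\lambda,\fS_\mu)$-double coset representative $w$, any $1\neq\sigma\in\fS_\lambda\cap w\fS_\mu w^{-1}$ with $\sigma=w\tau w^{-1}$ gives $g_\sigma g_w=g_wg_\tau$ by length-additivity on both sides, and $\ell(\sigma)=\ell(\tau)\geq1$ makes $q^{\ell(\sigma)}-(-q^{-1})^{\ell(\tau)}$ a nonzero element of the integral domain $\ZC=\Z[q,q^{-1}]$, forcing $\m_\lambda g_w\n_\mu=0$. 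The one point glossed over is the pigeonhole lemma that strict dominance $\mu\gdom\lambda'$ forces $\fS_\lambda\cap w\fS_\mu w^{-1}\neq\{1\}$ for every such $w$; this is genuinely the content of James's kernel-intersection theorem and deserves a precise citation rather than a parenthetical, but it is a classical fact and your invocation of it is appropriate. In short, the proof is correct and follows the same classical route the paper's references take; you have simply written out what the paper treats as folklore.
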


We begin to construct a cellular basis of $\mathscr B_{r, s}$. Fix
$r, s\in \mathbb Z^{>0}$. Let
\begin{equation}\label{poset}
 \Lambda_{r,s} = \left\{ (f,\lambda )| \lambda \in \Lambda_{r, s}^f, 0\le f \le \min \{ r,s \} \right\},\end{equation}
 where $\Lambda_{r,s}^f =\Lambda^+(r-f)\times \Lambda^+(s-f)$. So,
 each $\lambda\in \Lambda_{r,s}^f$ is of form $(\lambda^{(1)},
 \lambda^{(2)})$. We say that $(f,\lambda)\unrhd (\ell, \mu)$ if either $f>\ell$ or
$f=\ell$ and $\lambda\unrhd \mu$ in the sense
$\lambda^{(i)}\unrhd\mu^{(i)}$, $i=1,2$.  We write
 $(f,\lambda)\rhd (\ell,\mu)$ if $(f,\lambda)\unrhd (\ell,\mu)$
 and  $(f,\lambda)\neq(\ell,\mu)$.
 Then  $\Lambda_{r,s}$ is a poset.

Given a $\lambda\in \Lambda_{r, s}^f$, we define
$\t^\lambda=(\t^{\lambda^{(1)}}, \t^{\lambda^{(2)}})$ where
$\t^{\lambda^{(1)}}$ and $\t^{\lambda^{(2)}}$  are defined similarly
as (\ref{tla}). The only difference is that we have to use $f+i$
instead of $i$ in (\ref{tla}). Similarly, we have $\t_\lambda$.

\begin{example} Suppose $(r, s)=(2, 7)$, $f=1$ and  $(\lambda^{(1)},\lambda^{(2)}) =((1), (3,2,1))$.  We have
\begin{equation}\label{tlar} \t^{\lambda}=\left (\young(2), \quad \young(234,56,7)\right)
\quad \text{and} \quad \t_{\lambda}=\left (\young(2), \quad \young(257,36,4)\right).\end{equation}
\end{example}
For each $\lambda\in \Lambda^f_{r, s}$, let $\Std(\lambda^{(i)})$ be
the set of standard $\lambda^{(i)}$-tableaux which are obtained from
usual standard tableaux by using $f+j$ instead of $j$. Let
$\Std(\lambda)=\Std(\lambda^{(1)})\times \Std(\lambda^{(2)})$. If
$\s, \t\in \Std(\lambda)$ with $\s=(\s_1, \s_2)$ and $\t=(\t_1,
\t_2)$, we define
$$
\n_{\s\t}=\sigma(g_{d(\s_1)} g^*_{d(\s_2)})\n_{\lambda^{(1)}}
\n_{\lambda^{(2)}} g_{d(\t_1)}  g^*_{d(\t_2)},
$$
where $\sigma$ is the one given in Lemma~\ref{inv}. Let
$\mathscr{B}_{r,s}^{f}$  be the two-sided ideal of
$\mathscr{B}_{r,s}$ generated by $e^f$. Let
$\mathscr{B}_{r,s}^{\unrhd(f,\lambda)}$ be the two sided ideal of
$\mathscr{B}_{r,s}$ generated by $\mathscr{B}_{r,s}^{f+1}$ and all
$e^f \n_{\s \t}$ with $\s,\t\in \Std(\mu)$ and $(f,\mu) \unrhd (f,
\lambda)$. Define
$$\mathscr{B}_{r,s}^{\rhd(f,\lambda)}=\sum_{(f, \mu)\rhd (f, \lambda)}\mathscr{B}_{r,s}^{\unrhd(f,\mu)}.$$

The following  result follows from Proposition~\ref{tool1}
 and Lemma~\ref{quo1}, immediately.

\begin{Prop}\label{cell-m} Suppose  $(f, \lambda)\in \Lambda_{r, s}$. We have
\begin{enumerate} \item  $\Delta^R(f,\lambda)$ is a right
$\mathscr {B}_{r,s}$-module if $\Delta^R(f,\lambda)$  is the $R$-submodule of
$\mathscr{B}_{r,s}^{\unrhd(f,\lambda)}/~\mathscr{B}_{r,s}^{\rhd(f,\lambda)}$
spanned by $\{e^f\n_{\ts^\lambda \ss} g_d+
\mathscr{B}_{r,s}^{\rhd(f,\lambda)}|(\s, d) \in \Std(\lambda) \times
\mathscr{D}_{r,s}^f\}$;
\item $ \Delta^L(f,\lambda)$ is a left
$\mathscr {B}_{r,s}$-module if  $\Delta^L(f,\lambda)$  is the $R$-submodule of
$\mathscr{B}_{r,s}^{\unrhd(f,\lambda)}/~\mathscr{B}_{r,s}^{\rhd(f,\lambda)}$
spanned by $\{\sigma(g_d) e^f\n_{\ss\ts^\lambda } +
\mathscr{B}_{r,s}^{\rhd(f,\lambda)}|(\s, d) \in \Std(\lambda) \times
\mathscr{D}_{r,s}^f\}$.
\end{enumerate}
\end{Prop}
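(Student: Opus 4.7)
The plan is to prove (a); part (b) then follows by applying the anti-involution $\sigma$ of Lemma~\ref{inv} (which sends $e^f\n_{\ts^\lambda\ss}g_d$ to $\sigma(g_d)\n_{\ss\ts^\lambda}e^f$, using Lemma~\ref{fixef}) together with the left analogue Corollary~\ref{tool2}. For (a) the strategy is to factor the closure problem into two independently controlled pieces: the right action of $\mathscr B_{r,s}$ on the ``tail'' $e^fg_d$, supplied by Proposition~\ref{tool1}, and the right action of the subalgebra $\mathscr B_{r,s}(f)$ on the Murphy factor $\n_{\ts^\lambda\ss}$, which via the isomorphism $\mathscr B_{r,s}(f)/\langle e_{f+1}\rangle\cong\H_{r-f}\otimes\H_{s-f}$ of Lemma~\ref{quo1} is reduced to cellularity of the Murphy basis (Theorem~\ref{cellh}).

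The first step is to check that $e^f$ is central in $\mathscr B_{r,s}(f)$: each $e_j$ with $j\le f$ commutes with $g_k,g_k^*$ for $k>f$ by Lemma~\ref{tool}(a) and with the $e_i$'s for $i\le f+1$ by Lemma~\ref{tool}(d),(g). Since $\n_{\ts^\lambda\ss}$ is built only from generators of $\mathscr B_{r,s}(f)$, $e^f$ slides freely past it. Then for any $x\in\mathscr B_{r,s}$, Proposition~\ref{tool1} produces an expansion
\[
 e^f g_d\cdot x\;=\;\sum_{d'\in\mathscr D_{r,s}^f}a_{d'}\,e^f g_{d'},\qquad a_{d'}\in\mathscr B_{r,s}(f),
\]
and pulling $\n_{\ts^\lambda\ss}$ through yields
\[
 e^f\n_{\ts^\lambda\ss}g_d\cdot x\;=\;\sum_{d'}e^f\,\n_{\ts^\lambda\ss}\,a_{d'}\,g_{d'}.
\]

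The core step is then to rewrite each $e^f\n_{\ts^\lambda\ss}a_{d'}$ as an $R$-linear combination of elements $e^f\n_{\ts^\lambda\ss'}$ modulo $\mathscr B_{r,s}^{\rhd(f,\lambda)}$. I would decompose $a_{d'}=h_{d'}+i_{d'}$ where $h_{d'}$ is a lift of the image of $a_{d'}$ in $\H_{r-f}\otimes\H_{s-f}$ and $i_{d'}$ lies in the two-sided ideal generated by $e_{f+1}$. Since $e^f e_{f+1}=e^{f+1}$ and $e^f$ is central in $\mathscr B_{r,s}(f)$, the $i_{d'}$-contribution falls in $\mathscr B_{r,s}^{f+1}\subseteq\mathscr B_{r,s}^{\rhd(f,\lambda)}$. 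For the $h_{d'}$-contribution, cellularity of the Murphy basis applied in each tensor factor gives
\[
 \n_{\ts^\lambda\ss}\,h_{d'}\;\equiv\;\sum_{\ss'\in\Std(\lambda)}r_{\ss,\ss'}(h_{d'})\,\n_{\ts^\lambda\ss'}\pmod{(\H_{r-f}\otimes\H_{s-f})^{\rhd\lambda}},
\]
and multiplication by $e^f$ on the left sends the dominance ideal on the right into $\mathscr B_{r,s}^{\rhd(f,\lambda)}$ by the very definition of the latter.

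The main obstacle, as I see it, is the bookkeeping of the two simultaneous ``higher'' error sources -- the $\rhd\lambda$ error from Theorem~\ref{cellh} and the $e^{f+1}$-ideal error from Lemma~\ref{quo1} -- verifying that both genuinely land inside $\mathscr B_{r,s}^{\rhd(f,\lambda)}$ under the poset structure on $\Lambda_{r,s}$, i.e.\ matching the product poset $\Std(\lambda^{(1)})\times\Std(\lambda^{(2)})$ with the indexing set of the spanning set. Once this filtration matching is in hand, everything else amounts to routine commutations from Lemma~\ref{tool}.
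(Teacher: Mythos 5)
Your proof is correct and fills in exactly the details the paper leaves implicit — the paper simply declares the result ``follows from Proposition~\ref{tool1} and Lemma~\ref{quo1}, immediately.'' Your argument — exploiting the centrality of $e^f$ in $\mathscr{B}_{r,s}(f)$, applying Proposition~\ref{tool1} to move the right action of $x$ through $e^f g_d$ into $\mathscr{B}_{r,s}(f)$-coefficients, splitting those coefficients via Lemma~\ref{quo1} into a Hecke-algebra lift plus an $\langle e_{f+1}\rangle$-error, and then invoking cellularity of the Murphy basis (Theorem~\ref{cellh}) for the Hecke part while discarding the $e^{f+1}$-part into $\mathscr{B}_{r,s}^{f+1}\subseteq\mathscr{B}_{r,s}^{\rhd(f,\lambda)}$ — is precisely the intended route, and part~(b) via $\sigma$ and Lemma~\ref{fixef} is likewise correct.

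One slip worth flagging: in your final paragraph you speak of ``matching the product poset $\Std(\lambda^{(1)})\times\Std(\lambda^{(2)})$''; the tableaux index basis vectors, not the poset. The relevant poset comparison is on bipartitions under the product dominance order of equation~\eqref{poset}, i.e.\ $\mu \rhd \lambda$ means $\mu^{(i)}\unrhd\lambda^{(i)}$ in each coordinate with at least one strict — and this is exactly what guarantees that the $(\H_{r-f}\otimes\H_{s-f})^{\rhd\lambda}$-error, after left multiplication by $e^f$ and right multiplication by $g_{d'}$, lands in $\mathscr{B}_{r,s}^{\unrhd(f,\mu)}\subseteq\mathscr{B}_{r,s}^{\rhd(f,\lambda)}$, using that $\mathscr{B}_{r,s}^{\unrhd(f,\mu)}$ is a two-sided ideal.
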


For each $(f, \lambda)\in \Lambda_{r, s}$, we define  $I(f,
\lambda)=\Std(\lambda)\times \mathscr D^f_{r, s}$. For any $(\s
,e), (\t, d)\in I(f, \lambda)$, we define
\begin{equation}\label{cellbasis} C_{(\s, e)(\t, d)}=\sigma(g_e)
e^f\n_{\s\t}g_d.\end{equation}

As we explained before, the following result can be obtained from \cite[Theorem~6.13]{ Enyang2}.

\begin{Theorem} \label{celb}Let $\mathscr
{B}_{r,s}$ be the quantized wall Brauer algebra over $R$. Then
$\mathcal C$ is a cellular $R$-basis of $\mathscr {B}_{r,s}$ over
the poset $\Lambda_{r, s}$, where $$\mathcal C=\cup_{(f, \lambda)\in
\Lambda_{r,s}} \{C_{(\s, e)(\t, d)}\mid (\s, e), (\t, d) \in I(f,
\lambda)\}.$$  The required anti-involution $\sigma$ is the one
given in Lemma~\ref{inv}.\end{Theorem}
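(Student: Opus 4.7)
The strategy is to verify the three axioms of Definition~\ref{GL} for $\mathcal C$ by exploiting the filtration of $\mathscr{B}_{r,s}$ by the two-sided ideals $\mathscr{B}_{r,s}^f$, together with the known cellular structure of the Hecke algebra factors that control the quotients. For spanning, Proposition~\ref{tool1} and Corollary~\ref{tool2} together show that every element of $\mathscr{B}_{r,s}^f$ is an $R$-linear combination of products $\sigma(g_e)\,e^f\, h\, g_d$ with $e, d \in \mathscr{D}_{r,s}^f$ and $h \in \mathscr{B}_{r,s}(f)$. Working modulo $\mathscr{B}_{r,s}^{f+1}$, Lemma~\ref{quo1} lets us replace $h$ by its image in $\H_{r-f}\otimes \H_{s-f}$, and Theorem~\ref{cellh} applied to both tensor factors writes that image as an $R$-linear combination of products $\n_{\s\t}$ indexed by $(\s,\t)\in \Std(\lambda)^2$, $\lambda\in\Lambda^f_{r,s}$, modulo Hecke cell ideals. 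Reassembling produces exactly the elements of $\mathcal C$ at level $f$; cells of $\H_{r-f}\otimes \H_{s-f}$ at partitions $\mu\rhd\lambda$ correspond to terms in $\mathscr{B}_{r,s}^{\rhd(f,\lambda)}$, so iterating over $f$ gives spanning.

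For linear independence I would count
\[
|\mathcal C|=\sum_{f=0}^{\min\{r,s\}}\!\! |\mathscr D_{r,s}^f|^2 \!\!\sum_{\lambda\in \Lambda_{r,s}^f}\!\! |\Std(\lambda)|^2 = \sum_{f=0}^{\min\{r,s\}}\binom{r}{f}^2\binom{s}{f}^2(f!)^2(r-f)!(s-f)!
\]
and compare with the known $R$-rank of $\mathscr{B}_{r,s}$ established by Leduc and Kosuda--Murakami (which counts walled Brauer diagrams and equals the same quantity); combined with spanning this forces $\mathcal C$ to be an $R$-basis. For the anti-involution axiom, Lemma~\ref{fixef} gives $\sigma(e^f)=e^f$; cellularity of the Murphy basis in each Hecke factor gives $\sigma(\n_{\s\t})=\n_{\t\s}$; and since $\n_{\s\t}$ involves only $g_i, g_j^*$ with $i,j\geq f+1$, Lemma~\ref{tool}(a) ensures $e^f$ commutes with $\n_{\s\t}$. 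Applying $\sigma$ to $C_{(\s,e)(\t,d)}=\sigma(g_e)e^f \n_{\s\t}g_d$ and rearranging then yields $\sigma(C_{(\s,e)(\t,d)})=C_{(\t,d)(\s,e)}$.

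The main obstacle is the multiplication axiom, which demands that for every $a\in \mathscr{B}_{r,s}$,
\[
C_{(\s,e)(\t,d)}\,a \equiv \!\!\!\sum_{(\u, d')\in I(f,\lambda)}\!\!\! r_{(\t,d),(\u,d')}(a)\, C_{(\s,e)(\u,d')} \pmod{\mathscr{B}_{r,s}^{\rhd(f,\lambda)}},
\]
with coefficients independent of $(\s,e)$. By linearity it suffices to check this on the generators $e_1$, $g_i$, $g_j^*$. Proposition~\ref{cell-m}(a) already tells us that right multiplication descends to a well-defined action on $\Delta^R(f,\lambda)$; independence from $(\s,e)$ then follows because the left factor $\sigma(g_e)e^f \n_{\t^\lambda \s_1}\n_{\t^\lambda \s_2}$ plays no role in the right action modulo $\mathscr{B}_{r,s}^{\rhd(f,\lambda)}$. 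The delicate computation is the straightening of $\n_{\lambda^{(1)}}\n_{\lambda^{(2)}}g_d\cdot a$ modulo higher cells when $a$ is a generator crossing the wall or involving $e_1$; here one invokes the antisymmetrizer identity \eqref{sgnindex}, Lemma~\ref{tool}, and the rewriting rules derived in the proof of Proposition~\ref{tool1} via \eqref{gf}--\eqref{gf1}, with the excess terms falling into $\mathscr{B}_{r,s}^{f+1}\subseteq \mathscr{B}_{r,s}^{\rhd(f,\lambda)}$ through the $e_fe_{f+1}$-factorisations in Lemma~\ref{tool}(d)--(f).
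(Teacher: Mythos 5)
Your proposal is correct and follows essentially the same strategy as the paper's proof: use Proposition~\ref{cell-m} (itself resting on Proposition~\ref{tool1}, Corollary~\ref{tool2}, Lemma~\ref{quo1} and the Murphy basis) to get spanning, count cardinalities against the known rank $(r+s)!$ to get independence, and invoke Proposition~\ref{cell-m} together with $\sigma$ for the multiplication and anti-involution axioms. You supply more of the bookkeeping that the paper compresses into ``by Proposition~\ref{cell-m} and Lemma~\ref{inv}'' — in particular, the explicit closed form for $|\mathcal C|$, the commutation $e^f\n_{\t\s}=\n_{\t\s}e^f$ needed for $\sigma(C_{(\s,e)(\t,d)})=C_{(\t,d)(\s,e)}$, and the observation that the left factor $\sigma(g_e)\sigma(g_{d(\s_1)}g^*_{d(\s_2)})$ can be pulled out of the right action because $\mathscr B_{r,s}^{\rhd(f,\lambda)}$ is a two-sided ideal, which gives independence of the structure constants from $(\s,e)$. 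The only cosmetic difference is that you cite Leduc and Kosuda--Murakami for the rank while the paper cites the dimension count for walled Brauer algebras in \cite{CVDM}; both sources give the same number $(r+s)!$.
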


\begin{proof}  Suppose $0\le f\le
\min\{r, s\}$. By Proposition~\ref{cell-m},  $\mathscr{B}_{r,s}^{f}
/\mathscr{B}_{r,s}^{f+1}$ is spanned by $C_{(\s, e)(\t, d)}+
\mathscr{B}_{r,s}^{f+1}$, for all  $(\s, e), (\t, d) \in I(f,
\lambda)$  and $ \lambda\in \Lambda_{r, s}^f$. So, $\mathscr B_{r,
s}$ is spanned by $\mathcal C$.  Counting the dimension of the
walled Brauer algebra $B_{r, s}$ in \cite{CVDM} yields the equality
$\# \mathcal C=(r+s)!$. So, $\mathcal C$ is $R$-linear independence.
Finally, by Proposition~\ref{cell-m} and Lemma~\ref{inv}, $\mathcal C$ is a cellular
basis in the sense of \cite{GL}.
\end{proof}

For each $(f, \lambda)$, we use $C(f, \lambda)$ to denote the
right  cell module of $\mathscr B_{r, s}$ with respect to the cellular
basis in Theorem~\ref{celb}. We denote $\lambda'$ by $(\mu^{(1)},\mu^{(2)})$ where $\mu^{(i)}$ is the conjugate of $\lambda^{(i)}$, $i=1, 2$.

\begin{Prop} \label{classcell} For each $(f, \lambda)\in \Lambda_{r, s}$, let $\tilde C(f, \lambda):=e^f \m_{\lambda'}g_{d(\t_{\lambda'})}
 \n_\lambda \mathscr B_{r,s} \pmod{ \mathscr \B_{r,s}^{f+1}}$. As right $\mathscr B_{r, s}$-module,   $C(f, \lambda)\cong \tilde C(f, \lambda)$.
\end{Prop}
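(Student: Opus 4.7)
The plan is to construct a surjective right $\mathscr B_{r,s}$-module homomorphism $\phi\colon C(f,\lambda)\to\tilde C(f,\lambda)$ via left-multiplication by $\m_{\lambda'} g_{d(\t_{\lambda'})}$, and then conclude $\phi$ is an isomorphism by a rank comparison. Set $z_{\lambda'}:=\m_{\lambda'}g_{d(\t_{\lambda'})}\n_\lambda$. Since each of $\m_{\lambda'}$, $g_{d(\t_{\lambda'})}$, $\n_\lambda$ is built from generators $g_i, g_j^*$ with $i, j\ge f+1$, Lemma~\ref{tool}(a) gives $e^f z_{\lambda'}=\m_{\lambda'}g_{d(\t_{\lambda'})}\cdot e^f\n_\lambda$, so $\tilde C(f,\lambda)$ is the cyclic right $\mathscr B_{r,s}$-module generated by $\m_{\lambda'}g_{d(\t_{\lambda'})}\cdot e^f\n_\lambda+\mathscr B_{r,s}^{f+1}$.

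We define $\phi$ on the cellular basis of $C(f,\lambda)$ (Proposition~\ref{cell-m}) by
$$\phi\bigl(e^f\n_\lambda g_{d(\s_1)}g^*_{d(\s_2)}g_d+\mathscr B_{r,s}^{\rhd(f,\lambda)}\bigr):=e^fz_{\lambda'}g_{d(\s_1)}g^*_{d(\s_2)}g_d+\mathscr B_{r,s}^{f+1},$$
or equivalently as the right $\mathscr B_{r,s}$-module map induced by $x\mapsto \m_{\lambda'}g_{d(\t_{\lambda'})}\cdot x$ on the cyclic submodule of $\mathscr B_{r,s}/\mathscr B_{r,s}^{f+1}$ generated by $e^f\n_\lambda$. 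Surjectivity is immediate from the construction.

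The main obstacle is proving $\phi$ is well-defined: given $b\in\mathscr B_{r,s}$ with $e^f\n_\lambda b\in\mathscr B_{r,s}^{\rhd(f,\lambda)}$, we must establish $\m_{\lambda'}g_{d(\t_{\lambda'})}\cdot e^f\n_\lambda b\in\mathscr B_{r,s}^{f+1}$. By Theorem~\ref{celb}, modulo $\mathscr B_{r,s}^{f+1}$ the ideal $\mathscr B_{r,s}^{\rhd(f,\lambda)}$ is spanned by cellular basis elements $\sigma(g_e)e^f\n_{\s\t}g_d$ with $\mu\rhd\lambda$ in $\Lambda_{r,s}^f$. Writing $\n_{\s\t}=\sigma(g_{d(\s_1)}g^*_{d(\s_2)})\n_\mu g_{d(\t_1)}g^*_{d(\t_2)}$ and commuting the Hecke factors past $e^f$ (Lemma~\ref{tool}(a)), it suffices to show
$$e^f\cdot\m_{\lambda'}g_{d(\t_{\lambda'})}\n_\mu=0\quad\text{for every }\mu\rhd\lambda\text{ in }\Lambda_{r,s}^f.$$
Since $\H_{r-f}$ and $\H_{s-f}^*$ commute (Definition~\ref{wbmw}(g)), $\m_{\lambda'}g_{d(\t_{\lambda'})}\n_\mu$ factors as $[\m_{\lambda^{(1)'}}g_{d(\t_{\lambda^{(1)'}})}\n_{\mu^{(1)}}]\cdot[\m_{\lambda^{(2)'}}^*g^*_{d(\t_{\lambda^{(2)'}})}\n_{\mu^{(2)}}^*]$. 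The hypothesis $\mu\rhd\lambda$ in the product order forces $\mu^{(i)}\rhd\lambda^{(i)}$ for at least one index $i$; for that $i$, $\mu^{(i)}\rhd\lambda^{(i)}$ implies $\lambda^{(i)'}\not\trianglelefteq\mu^{(i)'}$ by conjugation of partitions, and the classical Hecke-algebra fact $\m_\alpha h\,\n_\beta=0$ for $\alpha\not\trianglelefteq\beta'$ (see~\cite{Mur}), applied with $\alpha=\lambda^{(i)'}, \beta=\mu^{(i)}$, kills that factor; hence the whole product vanishes.

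For injectivity, compare ranks. Proposition~\ref{classsp} in each Hecke factor, together with Proposition~\ref{classspe}, identifies $z_{\lambda'}\cdot(\H_{r-f}\otimes\H_{s-f}^*)$ with the Hecke cell module $C(\lambda^{(1)})\otimes C(\lambda^{(2)})$, a free $R$-module of rank $\#\Std(\lambda)$. Combined with Proposition~\ref{tool1}, this yields that $\{e^fz_{\lambda'}g_{d(\s_1)}g^*_{d(\s_2)}g_d+\mathscr B_{r,s}^{f+1}:(\s,d)\in I(f,\lambda)\}$ is an $R$-basis of $\tilde C(f,\lambda)$. Since $\#I(f,\lambda)=\rank_R C(f,\lambda)$, $\phi$ is an isomorphism.
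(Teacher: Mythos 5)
Your approach is genuinely different from the paper's. The paper produces an $R$-basis of $\tilde C(f,\lambda)$ directly (Proposition~\ref{tool1} and Lemma~\ref{quo1} to reduce to $\mathbf H(f)$, then Proposition~\ref{classsp} for a basis and Proposition~\ref{classspe} to identify it with $C(\lambda')$-data), and never constructs an explicit module map. You instead define $\phi$ explicitly as left multiplication by $\m_{\lambda'}g_{d(\t_{\lambda'})}$ and verify well-definedness plus a rank count. That makes the isomorphism concrete, which is a genuine gain, but the price is that the well-definedness step needs real care — and that is where your argument has a gap.

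The problematic step is the reduction ``commuting the Hecke factors past $e^f$, it suffices to show $e^f\m_{\lambda'}g_{d(\t_{\lambda'})}\n_\mu=0$.'' The cellular basis elements $\sigma(g_e)e^f\n_{\s\t}g_d$ of $\mathscr B_{r,s}^{\rhd(f,\lambda)}$ carry the prefix $\sigma(g_e)$ with $e\in\mathscr D_{r,s}^f$, which involves $g_i, g_j^*$ with indices down to $1$. These do \emph{not} commute with $\m_{\lambda'}g_{d(\t_{\lambda'})}\in\mathbf H(f)$ (all indices $\ge f+1$), so $\m_{\lambda'}g_{d(\t_{\lambda'})}$ cannot be pushed past $\sigma(g_e)$ to land next to $\n_\mu$; Lemma~\ref{tool}(a) only moves it past $e^f$, not past $\sigma(g_e)$. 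Worse, the inclusion you are implicitly asserting, namely $\m_{\lambda'}g_{d(\t_{\lambda'})}\cdot\sigma(g_e)e^f\n_{\s\t}g_d\in\mathscr B_{r,s}^{f+1}$ for every cellular generator of $\mathscr B_{r,s}^{\rhd(f,\lambda)}$, is simply false in general: already in $\mathscr B_{3,1}$ with $f=1$, $\lambda=((1,1),\emptyset)$, $\mu=((2),\emptyset)$ and $e=s_1s_2$ one computes a nonzero element (and here $\mathscr B_{3,1}^{2}=0$). What saves the proof is that one does not need the statement for \emph{all} of $\mathscr B_{r,s}^{\rhd(f,\lambda)}$, only for the subset $e^f\n_\lambda\mathscr B_{r,s}\cap\mathscr B_{r,s}^{\rhd(f,\lambda)}$. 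Using $e^f\mathscr B_{r,s}=N_f$ (Proposition~\ref{tool1}), the absorption $\langle e_{f+1}\rangle e^f\subseteq\mathscr B_{r,s}^{f+1}$, and cellularity of $\mathbf H(f)$, every element $e^f\n_\lambda b$ has cellular expansion modulo $\mathscr B_{r,s}^{f+1}$ supported only on terms $e^f\n_{\u\v}g_d$ with \emph{trivial} left prefix $e=1$; on such terms the prefix is $\sigma(g_{d(\u_1)}g^*_{d(\u_2)})\in\mathbf H(f)$, which \emph{does} commute with $e^f$ and can be absorbed as the ``$h$'' in the classical fact $\m_\alpha h\n_\beta=0$. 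You invoke that fact with the $h$ correctly, but without first establishing the restriction to $e=1$ the reduction is unjustified, and as written it relies on a false containment.

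A minor, fixable secondary point: the sufficiency condition should be stated as $\m_{\lambda'}g_{d(\t_{\lambda'})}\,h\,\n_\mu=0$ for arbitrary $h\in\mathbf H(f)$ rather than the special case $h=1$, since the factor $\sigma(g_{d(\u_1)}g^*_{d(\u_2)})$ sits between; you clearly have the right general Hecke fact in mind, so this is only a matter of phrasing once the main gap above is filled.
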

\begin{proof}  By Propositions~\ref{tool1},   $e^f \m_{\lambda'}g_{d(\t_{\lambda'})}
 \n_\lambda\mathscr B_{r,s} $ is spanned by $ e^f \m_{\lambda'}g_{d(\t_{\lambda'})}
 \n_\lambda \mathscr B_{r, s}(f) g_d $ for all $d\in \mathscr D_{r, s}^f$.
 By Lemma~\ref{quo1}, we can use $\mathscr H_{r-f}\otimes \mathscr H_{s-f}$ instead of  $\mathscr B_{r, s}(f)$
 in  $\tilde C(f, \lambda)$. Using Proposition~\ref{classsp} yields a basis of  $e^f \m_{\lambda'}g_{d(\t_{\lambda'})}
 \n_\lambda \mathscr B_{r,s} \pmod{ \mathscr \B_{r,s}^{f+1}}$. Now, required isomorphism follows from
Proposition~\ref{classspe}, immediately.\end{proof}

\section{Inductions and Restrictions }
In this section,  unless otherwise stated, we always consider $\mathscr B_{r,s}$ over a field $\kappa$. We will   describe   certain restrictions and inductions  of the cell modules
of $\mathscr B_{r, s}$. This is motivated by Doran, Wales Hanlon's work on Brauer algebras over $\mathbb C$ in \cite{DWH}.

\begin{Lemma}\label{efe} Let $\mathscr B_{r, s}$ be over
$\kappa$. We have
\begin{enumerate} \item  $e_1\mathscr{B}_{r,s}e_1=
{\mathscr{B}}_{r,s}(1)e_1$.
\item If $s\ge 2$, then $\tilde e_{12} \mathscr{B}_{r,s}\tilde e_{1,2}= {\mathscr{B}}_{r,s}(1)\tilde
e_{12} $ and $(\tilde e_{1,2})^2 =\tilde e_{1,2}$ where $\tilde e_{1,2}=\rho^{-1} e_1 g_1^*$.
\item If $r\ge 2$, then  $ {f_{2,1}} \mathscr{B}_{r,s} {f_{2,1}}={\mathscr{B}}_{r,s}(1)
{f_{21}} $ and  $({f_{2,1}})^2 ={f_{2,1}}$  where ${f_{2,1}}=\rho^{-1} e_1 g_1$.
\end{enumerate}
\end{Lemma}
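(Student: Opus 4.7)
The plan is to first establish~(a) and then deduce~(b) and~(c) from it by twisting with the invertible elements $g_1^*$ and $g_1$. For~(a), the inclusion $\mathscr B_{r,s}(1) e_1 \subseteq e_1 \mathscr B_{r,s} e_1$ uses commutativity together with Definition~\ref{wbmw}(e): every generator of $\mathscr B_{r,s}(1)$ commutes with $e_1$ by Lemma~\ref{tool}(a),(g), so for any $y \in \mathscr B_{r,s}(1)$ one has
$$y e_1 \;=\; \rho^{-1} y (e_1 g_1 e_1) \;=\; \rho^{-1} (y e_1) g_1 e_1 \;=\; \rho^{-1} e_1 (y g_1) e_1 \;\in\; e_1 \mathscr B_{r,s} e_1.$$
For the reverse inclusion I invoke Proposition~\ref{tool1} at $f=1$: since $e_1 \in N_1$ and $N_1$ is a right $\mathscr B_{r,s}$-module we get $e_1 \mathscr B_{r,s} \subseteq N_1$, and conversely each generator $e_1 g_d$ of $N_1$ lies in $e_1 \mathscr B_{r,s}$ (and $\mathscr B_{r,s}(1)$ commutes with $e_1$), giving $N_1 \subseteq e_1 \mathscr B_{r,s}$. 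Hence $e_1 \mathscr B_{r,s} = \sum_{d \in \mathscr D_{r,s}^1} \mathscr B_{r,s}(1) e_1 g_d$, and right-multiplication by $e_1$ reduces the problem to showing $e_1 g_d e_1 \in \mathscr B_{r,s}(1) e_1$ for every $d = s_{1, i_1} s^*_{1, j_1} \in \mathscr D_{r,s}^1$. The required formulas, split according to whether $i_1$ and $j_1$ equal $1$ or exceed $1$, are precisely those already derived in the proof of Proposition~\ref{tool1}; in each case the output is visibly of the form $z\, e_1$ with $z \in \mathscr B_{r,s}(1)$ after moving $e_1$ past the commuting factors $e_2, g_{2, i_1}, g_{2, j_1}^*$.

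For~(b), the idempotency $\tilde e_{1,2}^2 = \tilde e_{1,2}$ follows at once from Definition~\ref{wbmw}(l), namely $\tilde e_{1,2}^2 = \rho^{-2} e_1 g_1^* e_1 g_1^* = \rho^{-2}(\rho e_1) g_1^* = \tilde e_{1,2}$. For the bimodule identity, invertibility of $g_1^*$ in $\mathscr B_{r,s}$ gives $g_1^* \mathscr B_{r,s} = \mathscr B_{r,s}$, whence
$$\tilde e_{1,2} \mathscr B_{r,s} \tilde e_{1,2} \;=\; \rho^{-2} e_1 g_1^* \mathscr B_{r,s} e_1 g_1^* \;=\; \rho^{-2} (e_1 \mathscr B_{r,s} e_1) g_1^*,$$
which by~(a) equals $\rho^{-2} \mathscr B_{r,s}(1) e_1 g_1^* = \rho^{-1}\mathscr B_{r,s}(1)\tilde e_{1,2}= \mathscr B_{r,s}(1) \tilde e_{1,2}$ since $\rho^{-1}$ is a unit. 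Part~(c) is handled identically, with $g_1^*$ replaced by $g_1$ and Definition~\ref{wbmw}(l) replaced by Definition~\ref{wbmw}(e).

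The main obstacle is the case-by-case computation of $e_1 g_{1, i_1} g^*_{1, j_1} e_1$ in the $\subseteq$ half of~(a), which requires the mixed braid relations Definition~\ref{wbmw}(n),(o) together with Lemma~\ref{tool}(c),(d); but since these computations have already been carried out explicitly in the proof of Proposition~\ref{tool1}, the present proof reduces to an assembly of earlier results with careful bookkeeping of which factors land in $\mathscr B_{r,s}(1)$.
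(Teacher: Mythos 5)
Your proposal is correct and follows essentially the same route as the paper: you obtain the inclusion $\mathscr B_{r,s}(1) e_1 \subseteq e_1 \mathscr B_{r,s} e_1$ from $e_1 = \rho^{-1} e_1 g_1 e_1$, invoke Proposition~\ref{tool1} at $f=1$ to reduce the reverse inclusion to the computation of $e_1 g_d e_1$ for $d \in \mathscr D_{r,s}^1$ (the same formulas the paper reuses), and derive (b) and (c) from (a) by twisting with the units $g_1^*$ and $g_1$. The only point you leave unaddressed is the degenerate base case $r=s=1$, where $g_1$ does not exist and $\mathscr B_{1,1}(1) = \kappa$; the paper disposes of it separately before assuming $r \geq 2$.
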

\begin{proof}  If $r+s=2$, then $r=s=1$ and $\mathscr B_{1, 1} (1)=\kappa$. In this case, we have (a) by $e_1^2=\delta e_1$.
Suppose $r+s\ge 3$. By Remark~\ref{rbigs}, we can assume $r\ge 2$. Then   $e_1=\rho^{-1} e_1 g_1 e_1$. We have
${\mathscr{B}}_{r,s}(1)e_1=e_1{\mathscr{B}}_{r,s}(1)  g_1 e_1\subseteq e_1\mathscr{B}_{r,s}e_1$.
 By
Proposition~\ref{tool1} for $f=1$, each element in $e_1\mathscr{B}_{r,s}e_1$
can be written as a linear combination of elements in
${\mathscr{B}}_{r,s}(1)e_1g_de_1$ with
$d\in\mathscr{D}_{r,s}^1$. Note that $g_d= g_{1,i}g_{1,j}^*$ for
some positive integers $i, j$. By Definition~\ref{wbmw}(d)(k), we
need only to deal with the case $i, j\in \{1, 2\}$.

 If $\{i, j\}\cap
\{1\}\neq \emptyset$, by Definition~\ref{wbmw}(e) or (f) or (l), $e_1 g_d e_1\in \mathscr{B}_{r,s}(1)e_1$.
Otherwise, by Definition~\ref{wbmw}(a)(l) and Lemma~\ref{tool}(d), we have
$$e_1 g_d e_1=e_1g_{1}g_{1}^*e_1=e_1 ( g_1^{-1} +(q-q^{-1}))  g_1^* e_1=e_1e_2+\rho(q-q^{-1})
e_1\in  \mathscr{B}_{r,s}(1)   e_1.$$ So,
$ \mathscr{B}_{r,s}(1) e_1\supseteq e_1\mathscr{B}_{r,s}e_1$, and (a) follows.
(b)-(c) follow from (a), immediately.
\end{proof}

\begin{Cor} \label{leftright} As algebras over  $\kappa$, we have
$ \mathscr{B}_{r,s}(1) \tilde
e_{12}\cong  \mathscr{B}_{r,s}(1)\cong
 \mathscr{B}_{r,s}(1) {f_{2,1}}$.
\end{Cor}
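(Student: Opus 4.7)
The plan is to prove the first isomorphism $\mathscr{B}_{r,s}(1)\cong\mathscr{B}_{r,s}(1)\tilde e_{1,2}$ in detail and then remark that the second is handled by the same argument, substituting $f_{2,1}=\rho^{-1}e_1g_1$ for $\tilde e_{1,2}=\rho^{-1}e_1g_1^*$ and Definition~\ref{wbmw}(e) for~(l) throughout.

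First I would introduce the $R$-linear map $\phi\colon\mathscr{B}_{r,s}(1)\to\mathscr{B}_{r,s}(1)\tilde e_{1,2}$ by $\phi(x)=x\tilde e_{1,2}$. Surjectivity is immediate, and by Lemma~\ref{efe}(b) the codomain equals the corner algebra $\tilde e_{1,2}\mathscr{B}_{r,s}\tilde e_{1,2}$, which is a subalgebra of $\mathscr{B}_{r,s}$ with identity $\tilde e_{1,2}$ (the idempotence being the second half of Lemma~\ref{efe}(b)). Only multiplicativity and injectivity of $\phi$ then remain.

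Multiplicativity of $\phi$ is equivalent to the pivot identity
\begin{equation*}
\tilde e_{1,2}\,y\,\tilde e_{1,2}=y\,\tilde e_{1,2}\qquad\text{for every }y\in\mathscr{B}_{r,s}(1),
\end{equation*}
which I would verify on the generators of $\mathscr{B}_{r,s}(1)$ listed in Lemma~\ref{subiso1}, namely $e_2$, $g_i$ with $2\le i<r$ and $g_j^*$ with $2\le j<s$. For $g_i$ ($i\ge 2$) and $g_j^*$ ($j\ge 3$) the identity is immediate because these generators commute with both $e_1$ and $g_1^*$ by Definition~\ref{wbmw}(d),(g),(i),(k), leaving only the pivot $e_1g_1^*e_1=\rho e_1$ from~(l) to apply. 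The case $y=e_2$ adds the ingredient $e_1e_2=e_2e_1$ from Lemma~\ref{tool}(g); the only case with any content is $y=g_2^*$, in which one pushes $g_2^*$ across $e_1$ via~(k) to expose $e_1g_1^*e_1$ and then applies~(l). The extension to arbitrary products follows from the short chain
\begin{equation*}
\tilde e_{1,2}y_1y_2\tilde e_{1,2}=\tilde e_{1,2}y_1\tilde e_{1,2}y_2\tilde e_{1,2}=y_1\tilde e_{1,2}y_2\tilde e_{1,2}=y_1y_2\tilde e_{1,2},
\end{equation*}
whose three equalities apply the inductive hypothesis to $y_2$, $y_1$ and $y_2$ in turn.

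For injectivity I would argue that $\phi(x)=0$ forces $xe_1=0$ by invertibility of $g_1^*$, and then appeal to Proposition~\ref{tool1} at $f=1$: since the identity lies in $\mathscr{D}_{r,s}^1$ (take $i_1=j_1=1$), the element $e_1$ belongs to $\bar V_{r,s}^1$; comparing $\dim N_1$ read from the cellular basis of Theorem~\ref{celb} against $|\mathscr{D}_{r,s}^1|\cdot\dim\mathscr{B}_{r,s}(1)$ via Lemma~\ref{rcs} identifies $\bar V_{r,s}^1$ as a free $\mathscr{B}_{r,s}(1)$-basis of $N_1$, so that $x=0$. This last step is the main obstacle, since Proposition~\ref{tool1} records only that $\bar V_{r,s}^1$ spans $N_1$; the freeness requires a dimension count not stated there, whereas the multiplicativity computation is a straightforward application of the Hecke-type and mixed relations already in hand.
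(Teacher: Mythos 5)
Your multiplicativity argument is correct and is more careful than the paper itself, which simply declares that $x\mapsto x\tilde e_{1,2}$ is an algebra homomorphism. The pivot identity $\tilde e_{1,2}\,y\,\tilde e_{1,2}=y\,\tilde e_{1,2}$ on the generators $e_2$, $g_i$ ($i\ge 2$), $g_j^*$ ($j\ge 2$) of $\mathscr{B}_{r,s}(1)$, together with the word induction you record, is exactly what is needed, and each generator check is a routine application of Definition~\ref{wbmw}(d),(g),(i),(k),(l) and Lemma~\ref{tool}(g). Your overall strategy (explicit map, surjectivity is clear, injectivity by a dimension/basis count) is also the paper's strategy.

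The gap you flag on injectivity is real, and your proposed patch actually overreaches. You want to deduce $xe_1=0\Rightarrow x=0$ from the claim that $\bar V_{r,s}^1$ is a \emph{free} $\mathscr{B}_{r,s}(1)$-basis of $N_1$, and to ``read $\dim N_1$ from the cellular basis of Theorem~\ref{celb}.'' But $N_1=\mathscr{B}_{r,s}(1)\,e_1\,\mathscr{B}_{r,s}$ is only a $(\mathscr{B}_{r,s}(1),\mathscr{B}_{r,s})$-bimodule, not a two-sided ideal of $\mathscr{B}_{r,s}$, so its dimension is not directly visible in the cellular filtration; and freeness of $N_1$ on all of $\bar V_{r,s}^1$ is strictly more than the statement you actually need (that $\mathscr{B}_{r,s}(1)\to\mathscr{B}_{r,s}(1)e_1$ is injective), so the ``reduction'' is really a restatement plus extra baggage. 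The route the paper is gesturing at with ``cellular bases yield bases'' is shorter: since $e_1$ commutes with every generator of $\mathscr{B}_{r,s}(1)$, right-multiplication by $e_1$ sends a cellular basis element $\sigma(g_e)\,(e_2\cdots e_{f+1})\,\n_{\s\t}\,g_d$ of $\mathscr{B}_{r,s}(1)$ (indices from the shifted $\mathscr{D}_{r-1,s-1}^f$, $\lambda\in\Lambda_{r-1,s-1}^f=\Lambda_{r,s}^{f+1}$) to $\sigma(g_e)\,e^{f+1}\,\n_{\s\t}\,g_d$, which is precisely a cellular basis element of $\mathscr{B}_{r,s}$ in Theorem~\ref{celb} once one identifies the shifted $\mathscr{D}_{r-1,s-1}^f$ with the subset of $\mathscr{D}_{r,s}^{f+1}$ whose first factor $s_{1,i_1}s^*_{1,j_1}$ is the identity. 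Distinct input indices give distinct output indices, so the images are linearly independent in $\mathscr{B}_{r,s}$, hence $x\mapsto xe_1$ is injective; right-multiplying further by the unit $g_1^*$ gives injectivity of $\phi$ directly, with no detour through $N_1$.
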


\begin{proof} Using cellular bases for $\mathscr B_{r, s}$ and $\mathscr{B}_{r,s}(1) $ yields bases for  $\mathscr{B}_{r,s}(1) \tilde
e_{12}$ and $\mathscr{B}_{r,s}(1) {f_{2,1}}$. In particular, we have
$$ \dim_\kappa \mathscr{B}_{r,s}(1) \tilde
e_{12}=\dim_\kappa  \mathscr{B}_{r,s}(1)=\dim_\kappa\mathscr{B}_{r,s}(1) {f_{2,1}}.$$   So, the homomorphism from  $ \mathscr{B}_{r,s}(1)$ to
$ \mathscr{B}_{r,s}(1) \tilde e_{12}$ (resp.  $\mathscr{B}_{r,s}(1) {f_{2,1}}$ ) sending $x$ to $x \tilde e_{1,2}$ (resp. $x  f_{2,1}$)  is the required algebra  isomorphism.\end{proof}

For any finite dimensional algebra  $A$ over $\kappa$, let $
\text{$A$-mod}$ be the category of left $A$-modules. Since we are considering $\mathscr B_{r, s}$, each left  $\mathscr B_{r, s}$-module  can be considered as a
right  $\mathscr B_{r, s}$-module via anti-involution $\sigma$ in Lemma~\ref{inv}.

 In the remaining part of this paper, we use $\mathfrak e_{r, s} $ to
denote either $\tilde e_{1, 2}$ or $f_{2, 1}$ in Lemma~\ref{eequ}.  By Lemma~\ref{efe}(b)-(c),
Corollary~\ref{leftright}  and standard arguments in \cite[Sect.
6]{Gr}, we have  the Schur functor $\F_{r,s}$ and the functor
$\G_{r,s}$
$$\begin{aligned} & \F_{r,s}:
 \mathscr{B}_{r,s}\text{-mod}\longrightarrow\mathscr{B}_{r,s}(1)\text{-mod},\\
& \G_{r, s}: \mathscr{B}_{r,s}(1)\text{-mod}
\longrightarrow \mathscr{B}_{r,s}\text{-mod},\\
\end{aligned} $$
such that for any left $\mathscr B_{r, s}$-module $M$ and any left $
{\mathscr B}_{r,s}(1)$-module  $N$,
$$\F_{r,s}(M)=\mathfrak e_{r, s}  M, \quad \text{and}\quad  \G_{r,s}(N)= \mathscr{B}_{r,s}
\mathfrak e_{r,s} \otimes_{\mathscr{B}_{r,s}(1)} N.$$
We remark that the right (resp. left) action of ${\mathscr{B}}_{r,s}(1)$ on  $\mathscr{B}_{r,s}
\mathfrak e_{r,s}$ (resp.  $\mathfrak e_{r, s}  M$) is given by $$(\mathscr{B}_{r,s}
\mathfrak e_{r,s}) \circ h=\mathscr{B}_{r,s}
\mathfrak e_{r,s} h\mathfrak  e_{r,s}=\mathscr{B}_{r,s} h
\mathfrak e_{r,s}$$ (resp. $h\circ \mathfrak e_{r, s}  M= \mathfrak e_{r, s}h  \mathfrak e_{r, s} M=h\mathfrak e_{r, s}  M$) for any $h\in {\mathscr{B}}_{r,s}(1) $.

For the simplification of notation, we use $\F$, $\G$ and $\mathfrak
e$  instead of $\F_{r, s}$, $\G_{r, s}$ and $\mathfrak e_{r, s}$. We
also use $\Hom$ instead of $\Hom_{\mathscr B_{r, s}}$ if there is no
confusion. By Lemma~\ref{subiso1},  $ {\mathscr{B}}_{r+1,s+1}(1)\cong \mathscr B_{r,
s}$.  By abuse of notation, we will use the same notation to denote
the cellular basis, cell modules et al.  for  $
{\mathscr{B}}_{r+1,s+1}(1)$.

Unlike what we did before, we consider the left cell modules in the remaining part of this section. As we mentioned before, left $\mathscr B_{r, s}$-modules can also
 be considered as right $\mathscr B_{r, s}$-modules. We remark that
Lemma~\ref{functors} for   walled Brauer algebras has been given  in \cite{CVDM}.

\begin{Lemma}\label{functors} Suppose that
$(f,\lambda)\in \Lambda_{r,s}$ and $(\ell,\mu)\in
\Lambda_{r+1,s+1}$. \begin{enumerate} \item $\F \G=1$,
\item  $\F(C (f,\lambda))\cong C(f-1,\lambda)$,
\item $\G(C(f,\lambda))\cong C(f+1,
\lambda)$,
\item
$\Hom (\mathscr{B}_{r+1,s+1} \mathfrak e,  C (\ell,\mu))\cong
\mathfrak  e C(\ell,\mu) $,
\item $\Hom(\G(C(f,\lambda)),C(\ell,\mu))\cong
\Hom( C(f,\lambda), \F
  (C(\ell,\mu)))$.
\end{enumerate}\end{Lemma}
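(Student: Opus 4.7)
Parts (d), (e), and (a) are consequences of classical Schur-functor formalism; parts (b) and (c) form the technical core, and I would handle (b) directly and obtain (c) from (a) together with (b).

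For (d), Lemma~\ref{efe}(b)(c) gives $\mathfrak e^2 = \mathfrak e$, so every $\phi \in \Hom(\mathscr B_{r+1,s+1}\mathfrak e, C(\ell,\mu))$ satisfies $\phi(\mathfrak e) = \phi(\mathfrak e^2) = \mathfrak e\,\phi(\mathfrak e) \in \mathfrak e\,C(\ell,\mu)$; the map $\phi \mapsto \phi(\mathfrak e)$ with inverse $m \mapsto (x\mathfrak e \mapsto xm)$ yields the isomorphism. Part (e) combines (d) with the standard tensor--Hom adjunction
\[
\Hom_{\mathscr B_{r+1,s+1}}\!\bigl(\mathscr B_{r+1,s+1}\mathfrak e \otimes_{\mathscr B_{r+1,s+1}(1)} C(f,\lambda),\, C(\ell,\mu)\bigr)\;\cong\;\Hom_{\mathscr B_{r+1,s+1}(1)}\!\bigl(C(f,\lambda),\,\Hom(\mathscr B_{r+1,s+1}\mathfrak e, C(\ell,\mu))\bigr).
\]
For (a), $\F\G(N) = \mathfrak e\,\mathscr B_{r+1,s+1}\,\mathfrak e \otimes_{\mathscr B_{r+1,s+1}(1)} N$, and by Lemma~\ref{efe}(b)(c) with Corollary~\ref{leftright}, the left factor equals $\mathscr B_{r+1,s+1}(1)\mathfrak e \cong \mathscr B_{r+1,s+1}(1)$, giving $\F\G(N) \cong N$.

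For (b), take the generator $v_\lambda := e^f\,\n_\lambda$ of the left cell module $C(f,\lambda)$ from Proposition~\ref{cell-m}(b) with $\s = \t^\lambda$ and $d = 1$. The crucial identity is $\mathfrak e\,e^f = e^f$, which follows from $\mathfrak e\,e^f = \rho^{-1}(e_1 g_1^* e_1)e_2\cdots e_f = \rho^{-1}\rho\,e^f$ via Lemma~\ref{tool}(c). Hence $\mathfrak e v_\lambda = (\mathfrak e\,e^f)\n_\lambda = v_\lambda$, so
\[
\F(C(f,\lambda)) \;=\; \mathfrak e\,\mathscr B_{r+1,s+1}\,v_\lambda \;=\; \mathfrak e\,\mathscr B_{r+1,s+1}\,\mathfrak e\,v_\lambda \;=\; \mathscr B_{r+1,s+1}(1)\,v_\lambda.
\]
Since $\n_\lambda$ involves only generators $g_i,g_j^*$ with $i,j \ge f+1 \ge 2$, it commutes with $e_1$ by Definition~\ref{wbmw}(d)(k), so $v_\lambda = e_1(\n_\lambda\,e_2\cdots e_f)$ with the parenthesized factor in $\mathscr B_{r+1,s+1}(1)$. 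Under Lemma~\ref{subiso1}'s shift $e_{1+i}\mapsto e_i$, $g_{1+i}\mapsto g_i$, $g_{1+j}^*\mapsto g_j^*$, this factor corresponds to the generator $\n_\lambda\,e^{f-1}$ of $C(f-1,\lambda)$ in $\mathscr B_{r,s}$, and matching cellular bases (Proposition~\ref{cell-m}) completes (b). For (c), once (b) is applied at level $f+1$, the identity $\mathfrak e\,v_\lambda = v_\lambda$ gives $v_\lambda \in \mathfrak e\,C(f+1,\lambda)$, whence $C(f+1,\lambda) = \mathscr B_{r+1,s+1}\,v_\lambda \subseteq \mathscr B_{r+1,s+1}\mathfrak e\,C(f+1,\lambda) = \G\F(C(f+1,\lambda))$; the counit $\G\F \to 1$ is therefore surjective on this cell module, and using (a) plus a dimension comparison via cellular bases one checks it is an isomorphism, so $\G(C(f,\lambda)) \cong \G\F(C(f+1,\lambda)) \cong C(f+1,\lambda)$.

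The main obstacle is the cellular-basis matching at the end of (b). One must verify that the basis of the cyclic module $\mathscr B_{r+1,s+1}(1)v_\lambda$ inside $C(f,\lambda)$, inherited from Proposition~\ref{cell-m}, is carried bijectively onto the cellular basis of $C(f-1,\lambda)$ (indexed by $\Std(\lambda) \times \mathscr D_{r,s}^{f-1}$) under Lemma~\ref{subiso1}'s algebra identification. This reduces to explicitly tracking how $\mathfrak e$ acts on the basis of $C(f,\lambda)$ indexed by $\Std(\lambda) \times \mathscr D_{r+1,s+1}^f$ and extracting the natural sub-indexing set in bijection with $\mathscr D_{r,s}^{f-1}$ via the combinatorics of Lemma~\ref{rcs}.
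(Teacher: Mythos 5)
Your treatment of (a), (d), and (e) mirrors the paper's argument and is correct. The real content lies in (b) and (c), and those are precisely the two places where your proposal stops short.

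For (b), you correctly locate the cyclic generator $v_\lambda = e^f\n_\lambda$, verify $\mathfrak e\,v_\lambda = v_\lambda$ via Lemma~\ref{tool}(c), and reduce to $\F(C(f,\lambda)) = \mathscr B_{r,s}(1)\,v_\lambda$. But the ``cellular-basis matching'' you flag as the main obstacle is not a formality to be deferred; it is the proof. The paper settles it by invoking Lemma~\ref{efe}(a), $e_1\mathscr B_{r,s}e_1 = \mathscr B_{r,s}(1)e_1$, which pins down an explicit basis of $\mathfrak e\,C(f,\lambda)$ of the form $\{e_1\sigma(g_d)(e_2\cdots e_f)\n_{\t\t^\lambda}\}$ with $d$ ranging over a shifted copy of $\mathscr D^{f-1}_{r,s}$, and then maps each such element to $\sigma(g_d)e_2\cdots e_f\n_{\t\t^\lambda}$ in $C(f-1,\lambda)$ under the identification of Lemma~\ref{subiso1}. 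Saying ``this reduces to explicitly tracking how $\mathfrak e$ acts on the basis'' restates the problem rather than solving it.

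For (c), your argument gives surjectivity of the counit $\G\F(C(f+1,\lambda)) \to C(f+1,\lambda)$, but injectivity does not follow from (a): applying $\F$ to the short exact sequence $0\to K\to \G\F(C(f+1,\lambda))\to C(f+1,\lambda)\to 0$ and using $\F\G = 1$ only yields $\mathfrak e K = 0$, which does not force $K = 0$. (Also note the intermediate equality $\mathscr B_{r+1,s+1}\mathfrak e\,C(f+1,\lambda) = \G\F(C(f+1,\lambda))$ is false as stated; the left side is the image of the counit, not the tensor product itself.) The paper instead constructs the map $\psi:\G(C(f,\lambda))\to C(f+1,\lambda)$ directly, sending $h\mathfrak e\otimes e_2\cdots e_{f+1}\n_\lambda\mathfrak e$ to $he_1e_2\cdots e_{f+1}\n_\lambda$, shows it is onto, and then proves $\dim_\kappa\G(C(f,\lambda))\le \dim_\kappa C(f+1,\lambda)$ by a concrete spanning argument: applying Corollary~\ref{tool2} together with the rewriting relations (\ref{gf})--(\ref{gf1}) to show that only those coset representatives $s_{1,i_1}s^*_{1,j_1}$ with $s_{1,i_1}s^*_{1,j_1}d\in \mathscr D^{f+1}_{r+1,s+1}$ contribute. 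Your proposal defers this entirely to an unspecified ``dimension comparison via cellular bases,'' which is where the work actually is.
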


\begin{proof} (a) follows from Lemma~\ref{efe} and Corollary~\ref{leftright}.
We prove  (b) under the assumption $f\ge 1$. Otherwise, the result is trivial
 since $\mathfrak e C(0, \lambda)=0$ and $C(-1,\lambda):=0$.

 By Lemma~\ref{efe}(a),
$\mathfrak e C(f, \lambda) $ has a basis $ e_1
\sigma(g_d) (e_2\cdots e_f)  \n_{\t\t^\lambda} + \mathscr B_{r, s}^{\rhd (f, \lambda)} $
where $d\in \mathscr D^{f-1}_{ r, s}$ and $\t\in \Std(\lambda)$.
In this case, $\mathscr  D^{f-1}_{ r, s}$ consists of elements obtained by using
$s_{i}$ instead of $s_{i-1}$ in those of  usual $ \mathscr D^{f-1}_{ r, s}$ in Lemma~\ref{rcs}.
So, the required isomorphism between  $\mathfrak eC(f, \lambda)$
and $ C(f-1, \lambda)$ sends $e_1 \sigma (g_d) e_2\cdots e_f
\n_{\t\t^\lambda} +\mathscr B_{r, s}^{\rhd (f, \lambda)}$ to $\sigma(g_d)
e_2\cdots e_f \n_{\t\t^\lambda} +  {\mathscr B}_{r, s}(1)^{\rhd (f-1,
\lambda)}$. This proves (b). By  general result for rings and idempotents, we have (d).
By the adjoint associativity of Hom and tensor functors together with   (d), we have (e).

By Corollary~\ref{leftright},  the cell  module of $ {\mathscr
B}_{r+1, s+1}(1)  \mathfrak e $ with respect to $(f, \lambda)$
can be identified with $C(f, \lambda)\mathfrak e $,
where $C(f, \lambda)$ is the corresponding cell module for
${\mathscr B}_{r+1, s+1}(1)$. Note that $g_1$ is invertible. So,
 $\psi:\G(C(f, \lambda))\rightarrow C (f+1, \lambda)$
sending $h\mathfrak e \otimes  e_2\cdots e_{f+1}
\n_\lambda \mathfrak e    $ to $h e_1 e_2\cdots e_{f+1}
\n_\lambda$
 is a homomorphism as left $\mathscr B_{r+1,
s+1}$-modules. Since $e_1 e_2\cdots e_{f+1} \n_\lambda$ is a
generator of $C(f+1, \lambda)$, $\psi$  is an epimorphism.  Using
Corollary~\ref{tool2} for $\mathscr B_{r+1, s+1} e_1$ and the basis
of $C(f, \lambda)$, we have that  each element in $\G(C(f,
\lambda))$ can be written as a linear combination of elements
$$\sigma ({g_{1, i_1}} g^*_{1, j_1})\mathfrak e\otimes\sigma(g_d) e_2\cdots
e_{f+1}\n_{\t\t^\lambda}\mathfrak e ,  \ \ (\t, d)\in
\Std(\lambda)\times {\mathscr D^{f}_{r, s}}, s_{1, i_1}s^*_{1,j_1}\in
\mathscr D^{1}_{r+1, s+1},
$$ where ${\mathscr D^{f}_{r, s}}$ is obtained from usual ${\mathscr D^{f}_{r, s}}$
by using $s_{i+1}, s_{j+1}^*$ instead of $s_i$, and $s_j^*$, for $1\le i\le r-1$ and $1\le j\le s-1$, respectively.
Note that $\mathfrak e h \mathfrak e=h\mathfrak e$ for all $h\in
\mathscr B_{r+1,s+1}(1)$. By (\ref{gf})--(\ref{gf1}),  we can keep
those $s_{1, i_1}s^*_{1,j_1}\in \mathscr D^{1}_{r+1, s+1}$ such that
$s_{1, i_1}s^*_{1,j_1} d\in \mathscr D^{f+1}_{r+1, s+1}$. So,
$\dim_\kappa \G(C(f, \lambda))\le \dim_\kappa C (f+1, \lambda)$,
forcing $\psi$ to be injective. This completes the proof of (c).
 \end{proof}

\begin{Lemma}\label{subiso}  Suppose  $r\ge 2$ (resp. $s\ge 2$). Let  $\tilde {\mathscr B}_{r-1, s}$
(resp.  $\tilde {\mathscr
B}_{r, s-1}$)  be the subalgebra of $ \mathscr B_{r, s}$
generated by $g_1^{-1} e_1g_1$, $g_i$, $2\le i\le
r-1$ and  $g^*_j$, $1\le j\le s-1$ (resp.  $(g_1^*)^{-1} e_1 g_1^*$, $g_i$ and $g^*_j$ with $1\le i\le r-1$
and $2\le j\le s-1$).  Then  $\tilde {\mathscr B}_{r-1, s}\cong \mathscr B_{r-1, s}$
and $\tilde {\mathscr B}_{r, s-1}\cong \mathscr B_{r, s-1}$.
\end{Lemma}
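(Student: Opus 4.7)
The plan is to construct an explicit isomorphism $\phi\map{\mathscr B_{r-1,s}}{\tilde{\mathscr B}_{r-1,s}}$, verify that the defining relations of Definition~\ref{wbmw} are preserved, and then establish injectivity via a dimension count using the cellular basis of Theorem~\ref{celb}. Concretely, set
\[
\phi(e_1)=g_1^{-1}e_1g_1,\qquad \phi(g_i)=g_{i+1}\ \ (1\le i\le r-2),\qquad \phi(g_j^*)=g_j^*\ \ (1\le j\le s-1);
\]
once $\phi$ extends to a homomorphism, its image is by construction exactly $\tilde{\mathscr B}_{r-1,s}$. The second isomorphism $\tilde{\mathscr B}_{r,s-1}\cong\mathscr B_{r,s-1}$ is handled symmetrically (using $(g_1^*)^{-1}e_1g_1^*$ and shifting only the starred generators), or alternatively deduced directly from Remark~\ref{rbigs}.

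Most defining relations are straightforward to verify. The Hecke-type braid and quadratic relations for $\phi(g_i)=g_{i+1}$ and $\phi(g_j^*)=g_j^*$ hold already in $\mathscr B_{r,s}$, and the commutations of $\phi(g_k)$ or $\phi(g_k^*)$ with $\phi(e_1)$ for $k\ne 1$ reduce to the fact that $g_{k+1}$ and $g_k^*$ each commute with both $g_1$ and $e_1$. The nontrivial cases are the three relations containing $e_1 g_\bullet e_1$. For relation~(e), the identity $(g_1^{-1}e_1g_1)g_2(g_1^{-1}e_1g_1)=\rho g_1^{-1}e_1 g_1$ is established by applying the braid-derived identity $g_1 g_2 g_1^{-1}=g_2^{-1}g_1 g_2$, the commutation $e_1 g_2=g_2 e_1$, and the target relation $e_1 g_1 e_1=\rho e_1$. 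Relation~(l) is immediate from $g_1 g_1^*=g_1^* g_1$. Relations~(m) and (n) under $\phi$ simplify, via the same braid identity together with $g_2 g_1^*=g_1^* g_2$ and $g_1 g_1^*=g_1^* g_1$, so that after cancelling common prefixes and suffixes both sides reduce to the corresponding relations~(m) and (n) of $\mathscr B_{r,s}$, which therefore supply the required equality. Thus $\phi$ extends to a surjective algebra homomorphism onto $\tilde{\mathscr B}_{r-1,s}$.

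Injectivity then follows from a dimension comparison. By Theorem~\ref{celb}, $\dim \mathscr B_{r-1,s}=(r-1+s)!$, and since $\phi$ is surjective it suffices to show that $\phi$ sends the cellular basis of $\mathscr B_{r-1,s}$ to a linearly independent set in $\mathscr B_{r,s}$. The strategy is to track how $\phi$ interacts with the ideal filtration: the image in $\mathscr B_{r,s}$ of the source ideal generated by $e^f$ lies in the two-sided ideal generated by $\phi(e^f)=(g_1^{-1}e_1 g_1)\phi(e_2)\cdots\phi(e_f)$, and after rewriting $\phi(e^f)$ in terms of the standard $e^f$ using the commutation formulas from Lemma~\ref{tool} and the manipulations from the proof of Proposition~\ref{tool1}, the images of distinct cellular basis elements exhibit distinguishable leading terms modulo the higher filtration level. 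The main obstacle is precisely this bookkeeping with the cell filtration and the coset representatives $\mathscr D_{r-1,s}^f$; once it is carried out, the lemma follows.
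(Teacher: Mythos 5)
Your proposal matches the paper's proof, which simply declares that the map $g_1^{-1}e_1g_1\mapsto e_1$, $g_i\mapsto g_{i-1}$, $g_j^*\mapsto g_j^*$ is the required isomorphism and calls the verification "easy to check"; you carry out the relation verification more carefully (the braid identity $g_1g_2g_1^{-1}=g_2^{-1}g_1g_2$ together with the commutations of $e_1,g_1^*$ with $g_2$ indeed reduces the nontrivial relations (e), (m), (n) to the corresponding relations of $\mathscr B_{r,s}$). The injectivity of $\phi$ is only sketched in your write-up, but the paper's own proof omits it entirely, so your argument is at least as complete as the source.
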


\begin{proof}It is easy to check the required isomorphism from   $\tilde {\mathscr B}_{r-1, s}$
(resp.   $\tilde {\mathscr B}_{r, s-1}$) to $\mathscr B_{r-1, s}$ (resp.  $\mathscr B_{r, s-1}$)  sends $g_1^{-1} e_1g_1$,
(resp. $(g_1^*)^{-1} e_1 g_1^*$) and $ g_i, g^*_j$
to $e_1$ and  $g_{i-1}, g^*_j$ (resp. $g_i, g^*_{j-1}$). \end{proof}

 If   $r\geq2$,  $\mathscr B_{r-1,
s}$ is a  subalgebra of $\mathscr B_{r, s}$ with $r-1>0$. So,  we consider restriction
and induction  functors as follows:
$$\begin{aligned}
{\Res}_{r, s}^L: &\quad \mathscr{B}_{r,s}\text{-mod} \rightarrow {\mathscr{B}}_{r-1,s}\text{-mod},\\
 {\Ind}_{r-1, s}^L:  & \quad {\mathscr{B}}_{r-1,s}\text{-mod} \rightarrow \mathscr{B}_{r,s}\text{-mod}.\\
\end{aligned}$$
Similarly, we assume that $s\ge 2$, we  consider induction and restriction
functors  as follows.
$$\begin{aligned}
{\Res}_{r, s}^R: & \quad \mathscr{B}_{r,s}\text{-mod} \rightarrow {\mathscr{B}}_{r,s-1}\text{-mod},\\
{\Ind}_{r, s-1}^R: &\quad  {\mathscr{B}}_{r,s-1}\text{-mod}  \rightarrow \mathscr{B}_{r,s}\text{-mod}.\\
\end{aligned}$$

For the simplification of notations, we will use  $\Res^L$ instead of $\Res_{r, s}^L$, etc.

\begin{Lemma}\label{eequ} Let $\mathscr B_{r,s}$ be defined over $\kappa$.
 \begin{enumerate}  \item If $r\ge 2$, then $  {\mathscr
B}_{ r, s} e_1 g_1  =\tilde{ \mathscr B}_{r-1, s}e_1 g_1$. \item If
$s\ge 2$, then $\mathscr B_{r, s} e_1 g_1^*  = \tilde {\mathscr
B}_{r, s-1} e_1 g_1^*$.
\end{enumerate}
\end{Lemma}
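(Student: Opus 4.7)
The inclusion $\tilde{\mathscr B}_{r-1, s}\, e_1 g_1 \subseteq \mathscr B_{r, s}\, e_1 g_1$ is immediate from Lemma~\ref{subiso}. Since $g_1$ is invertible in $\mathscr B_{r,s}$, right-multiplication by $g_1^{-1}$ shows that the reverse inclusion is equivalent to $\mathscr B_{r, s}\, e_1 \subseteq \tilde{\mathscr B}_{r-1, s}\, e_1$. The plan is to produce a tractable spanning set for $\mathscr B_{r, s}\, e_1$ via Corollary~\ref{tool2} with $f = 1$, and then verify each spanning element lies in $\tilde{\mathscr B}_{r-1, s}\, e_1$ by a one-line computation.

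Applied with $f = 1$, Corollary~\ref{tool2} exhibits a left $\mathscr B_{r,s}$-module $N_1$ that is, as a right $\mathscr B_{r,s}(1)$-module, spanned by $\{\sigma(g_d) e_1 : d \in \mathscr D^1_{r,s}\}$. Every generator of $\mathscr B_{r,s}(1)$---namely $e_2$, $g_i$ ($i \ge 2$), $g_j^*$ ($j \ge 2$)---commutes with $e_1$, by Lemma~\ref{tool}(a),(g) and Definition~\ref{wbmw}(d),(k). Consequently $e_1 \mathscr B_{r,s}(1) = \mathscr B_{r,s}(1) e_1$, and since $e_1 \in N_1$,
\[
\mathscr B_{r, s}\, e_1 \;=\; N_1 \;=\; \sum_{d \in \mathscr D^1_{r, s}} \sigma(g_d)\, \mathscr B_{r, s}(1)\, e_1.
\]
Two further ingredients are needed. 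First, $\mathscr B_{r, s}(1) \subseteq \tilde{\mathscr B}_{r-1, s}$: the generators $g_i$ ($i \ge 2$) and $g_j^*$ ($j \ge 2$) lie there by definition, while the identity $e_2 = g_1^* \tilde e_1 (g_1^*)^{-1}$ (valid because $g_1^*$ commutes with $g_1$) places $e_2$ there as well. Second, the base-case formula
\[
g_1 e_1 \;=\; \bigl(\rho^{-1}\tilde e_1 + (q-q^{-1})\bigr)\, e_1,
\]
which follows from the Hecke relation $g_1 = g_1^{-1} + (q-q^{-1})$ together with $\tilde e_1 e_1 = g_1^{-1}(e_1 g_1 e_1) = \rho\, g_1^{-1} e_1$ (so $g_1^{-1} e_1 = \rho^{-1}\tilde e_1 e_1$).

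Now fix $d = s_{1, i} s^*_{1, j} \in \mathscr D^1_{r, s}$ and $h \in \mathscr B_{r, s}(1)$. Then $\sigma(g_d) = g^*_{j, 1}\, g_{i, 1}$ with $g^*_{j, 1} \in \tilde{\mathscr B}_{r-1, s}$, so it suffices to show $g_{i, 1}\, h\, e_1 \in \tilde{\mathscr B}_{r-1, s}\, e_1$. For $i \ge 2$, applying $h e_1 = e_1 h$ twice and the base-case formula yields
\[
g_{i, 1}\, h\, e_1 \;=\; g_{i-1} \cdots g_1 e_1 h \;=\; g_{i-1} \cdots g_2 (g_1 e_1) h \;=\; \bigl(g_{i-1} \cdots g_2 (\rho^{-1}\tilde e_1 + (q-q^{-1})) h\bigr)\, e_1,
\]
and the prefix lies in $\tilde{\mathscr B}_{r-1, s}$ as a product of its generators; the case $i = 1$ reduces to $h e_1 \in \mathscr B_{r,s}(1) e_1 \subseteq \tilde{\mathscr B}_{r-1,s} e_1$. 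This proves (a). Part (b) is obtained from (a) via the isomorphism $\mathscr B_{r, s} \cong \mathscr B_{s, r}$ of Remark~\ref{rbigs}, which interchanges $g_1$ with $g_1^*$ and $\tilde{\mathscr B}_{r-1, s}$ with $\tilde{\mathscr B}_{r, s-1}$. The principal difficulty is the conceptual one of recognizing that Corollary~\ref{tool2} already supplies the right spanning set; a direct approach would entail a delicate induction involving braid relations with $g_2$.
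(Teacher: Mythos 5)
Your proof is correct and follows essentially the same route as the paper's: reduce (via invertibility of $g_1$) to showing $\mathscr B_{r,s}e_1\subseteq\tilde{\mathscr B}_{r-1,s}e_1$, use Corollary~\ref{tool2} with $f=1$ to obtain the spanning set $\sigma(g_d)\,\mathscr B_{r,s}(1)\,e_1$, and then verify $g_1e_1\in\tilde{\mathscr B}_{r-1,s}e_1$ by rewriting $g_1$ as $g_1^{-1}+(q-q^{-1})$ and using $e_1g_1e_1=\rho e_1$. You spell out the steps the paper leaves implicit (notably $\mathscr B_{r,s}(1)\subseteq\tilde{\mathscr B}_{r-1,s}$ via $e_2=g_1^*\tilde e_1(g_1^*)^{-1}$, and the base $i=1$ case), and you derive (b) from (a) by the $r\leftrightarrow s$ symmetry rather than repeating the argument, but the substance is the same.
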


\begin{proof}  For (a), it suffices to prove  $ {\mathscr
B}_{ r, s}e_1   \subseteq \tilde { \mathscr B}_{r-1, s}e_1$. By
Corollary~\ref{tool2}, each element in $ \mathscr B_{r, s} e_1$ can
be written as a linear combination of elements in $\sigma(g_d)
e_1 {\mathscr B}_{r, s}(1)$ where $d\in \mathscr D^1_{r, s}$.
Note that $d=s_{1, i} s^*_{1, j}$ for $i,
j\ge 1$. So, we need to verify $g_{1} e_1 \in \tilde {\mathscr
B}_{ r-1, s} e_1$. This is the case since $\rho g_1
e_1=g_1e_1g_1e_1=g_1^{-1} e_1g_1e_1+ \rho(q-q^{-1}) e_1$ and $g_1^{-1} e_1g_1\in\tilde{ \mathscr B}_{r-1, s}$.
Finally,  (b) can be proved similarly.
\end{proof}

\begin{Prop}\label{bim} Let $\mathscr B_{r, s}$ be over $\kappa$.
\begin{enumerate}  \item If $r\ge 2$, then ${\mathscr
B}_{ r, s} f_{21}\cong \tilde{ \mathscr B}_{r-1, s} $ as $(\tilde
{\mathscr B}_{r-1, s}, {\mathscr B}_{r, s}(1) f_{21})$-bimodules.
\item If $s\ge 2$, then $\mathscr B_{r, s} \tilde e_{12} \cong\tilde
{\mathscr B}_{r, s-1} $ as $(\tilde {\mathscr B}_{r-1, s},
{\mathscr B}_{r, s}(1) \tilde e_{12})$-bimodules.
\end{enumerate}
\end{Prop}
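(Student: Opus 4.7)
My plan is to prove part~(a); part~(b) is entirely analogous under the substitutions $r\leftrightarrow s$, $f_{2,1}\leftrightarrow \tilde e_{1,2}$, $g_1\leftrightarrow g_1^*$, with Lemma~\ref{eequ}(a) replaced by Lemma~\ref{eequ}(b). Define the $R$-linear map
$$\phi\colon \tilde{\mathscr B}_{r-1,s}\longrightarrow \mathscr B_{r,s}f_{2,1},\qquad \phi(y)=yf_{2,1},$$
and aim to show it is an isomorphism of bimodules. To make sense of the right action I first verify that $\mathscr B_{r,s}(1)\subseteq \tilde{\mathscr B}_{r-1,s}$: the generators $g_i$ ($2\le i\le r-1$) and $g_j^*$ ($2\le j\le s-1$) of $\mathscr B_{r,s}(1)$ lie in $\tilde{\mathscr B}_{r-1,s}$ by definition, while \eqref{eij} combined with $g_1g_1^*=g_1^*g_1$ (Definition~\ref{wbmw}(g)) gives $e_2=g_1^{-1}g_1^*e_1g_1(g_1^*)^{-1}=g_1^*(g_1^{-1}e_1g_1)(g_1^*)^{-1}\in\tilde{\mathscr B}_{r-1,s}$. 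Through the algebra isomorphism $\mathscr B_{r,s}(1)\cong \mathscr B_{r,s}(1)f_{2,1}$ of Corollary~\ref{leftright}, right multiplication by $\mathscr B_{r,s}(1)\subseteq \tilde{\mathscr B}_{r-1,s}$ then endows $\tilde{\mathscr B}_{r-1,s}$ with the required right $\mathscr B_{r,s}(1)f_{2,1}$-module structure.

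The left $\tilde{\mathscr B}_{r-1,s}$-linearity of $\phi$ is immediate. For right-linearity, Corollary~\ref{leftright} gives $xf_{2,1}\cdot yf_{2,1}=xyf_{2,1}$ in $\mathscr B_{r,s}$ for $x,y\in\mathscr B_{r,s}(1)$; setting $x=1$ forces $f_{2,1}hf_{2,1}=hf_{2,1}$ for every $h\in\mathscr B_{r,s}(1)$, whence $\phi(y)\cdot(hf_{2,1})=yf_{2,1}hf_{2,1}=yhf_{2,1}=\phi(yh)$. Surjectivity is immediate from Lemma~\ref{eequ}(a): multiplying $\mathscr B_{r,s}e_1g_1=\tilde{\mathscr B}_{r-1,s}e_1g_1$ by $\rho^{-1}$ gives $\mathscr B_{r,s}f_{2,1}=\tilde{\mathscr B}_{r-1,s}f_{2,1}=\phi(\tilde{\mathscr B}_{r-1,s})$.

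The main obstacle is injectivity, which I would establish by a dimension count. By Lemma~\ref{subiso} together with Theorem~\ref{celb}, $\dim_\kappa\tilde{\mathscr B}_{r-1,s}=\dim_\kappa\mathscr B_{r-1,s}=(r+s-1)!$, and right multiplication by the invertible element $g_1$ gives a $\kappa$-linear bijection $\mathscr B_{r,s}f_{2,1}\to \mathscr B_{r,s}e_1$ (up to the scalar $\rho$), so injectivity reduces to the upper bound $\dim_\kappa\mathscr B_{r,s}e_1\le(r+s-1)!$; the reverse inequality is automatic from surjectivity of $\phi$. The spanning set provided by Corollary~\ref{tool2} with $f=1$, namely $\{\sigma(g_d)e_1h:d\in\mathscr D^1_{r,s},\ h\in\mathscr B_{r,s}(1)\}$, has cardinality $|\mathscr D^1_{r,s}|\cdot\dim\mathscr B_{r,s}(1)=rs(r+s-2)!$, which exceeds $(r+s-1)!$; the core technical difficulty is thus to eliminate the redundancies. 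I would do this by transporting the cellular basis of $\mathscr B_{r-1,s}$ from Theorem~\ref{celb} through the isomorphism of Lemma~\ref{subiso} into $\tilde{\mathscr B}_{r-1,s}$ and then verifying, via the identity $f_{2,1}e_1=e_1$ (which follows at once from $e_1g_1e_1=\rho e_1$) together with the braid-type rewriting rules~\eqref{gf}--\eqref{gf1} and the relations of Lemma~\ref{tool}(c)--(f), that the resulting $(r+s-1)!$ vectors $\phi(y)$ are $\kappa$-linearly independent in $\mathscr B_{r,s}$, matching the upper bound and completing the proof.
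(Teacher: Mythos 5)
Your overall framework matches the paper's: you define the same map $\phi(y)=y f_{2,1}$, deduce surjectivity from Lemma~\ref{eequ}(a), compute $\dim_\kappa\tilde{\mathscr B}_{r-1,s}=(r+s-1)!$ via Lemma~\ref{subiso}, and plan to finish with a dimension comparison. The verification of the right $\mathscr B_{r,s}(1)f_{2,1}$-module structure and of $\mathscr B_{r,s}(1)\subseteq\tilde{\mathscr B}_{r-1,s}$ is also fine. However there are two problems.

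First, a small but real logical slip: surjectivity of $\phi$ gives $\dim_\kappa\mathscr B_{r,s}f_{2,1}\le\dim_\kappa\tilde{\mathscr B}_{r-1,s}=(r+s-1)!$, i.e.\ the \emph{upper} bound, and injectivity is equivalent to the \emph{lower} bound $\dim_\kappa\mathscr B_{r,s}e_1\ge(r+s-1)!$. You state the reduction the other way around. Your eventual plan (exhibit $(r+s-1)!$ independent vectors) would indeed give the lower bound, so the slip is in the bookkeeping rather than the strategy, but the intermediate sentences as written are not correct.

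Second, and more seriously, the crucial step — establishing $\dim_\kappa\mathscr B_{r,s}e_1=(r+s-1)!$ — is not actually carried out; you acknowledge it as ``the core technical difficulty'' and then only sketch that you would ``verify linear independence'' of the $(r+s-1)!$ images $\phi(y)$. That is precisely the hard part, and it is not obvious how to do it directly without going in circles through the very spanning-set reduction you point out is overcounting. The paper avoids a direct independence argument by a comparison trick you do not have: after proving the ``claim'' that every $\sigma(g_{d_1}g_d)e^f$ is a linear combination of $\sigma(g_a)e^f$ with $a\in\mathscr D^f_{r,s}$ (via Lemma~\ref{tool}(e), \eqref{gf}--\eqref{gf1}, and induction on $f$), it observes that the entire rewriting procedure transports verbatim, with $g_i,g_j^*$ replaced by $s_i,s_j^*$, to the classical walled Brauer algebra $B_{r,s}$. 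Hence $\dim_\kappa\mathscr B_{r,s}e_1=\dim_\kappa B_{r,s}e_1$, and the latter is $(r+s-1)!$ by \cite[Prop.~2.10]{CVDM}. Without this classical comparison (or some equivalent dimension input), your proposal does not close the gap and the proof is incomplete.
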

\begin{proof}
We compute the dimension  $ \mathscr B_{r, s} e_1$ via the cellular basis of $ {\mathscr B}_{r,
s}(1) $, which is obtained from that of $\mathscr B_{r-1, s-1}$ by
using $g_i, g_j^*$ and $e_2$ instead of $g_{i-1}, g^*_{j-1}$ and
$e_1$, respectively.

By Corollary~\ref{tool2}, each element in $ \mathscr B_{r, s} e_1$ can be
written as a linear combination of $\sigma (g_{d_1} g_d ) e_1 e_2\cdots e_{f}
\n_{\s\t} g_{d_{2}} $ where $\sigma (g_{d_1}) e_2\cdots e_{f}
\n_{\s\t} g_{d_{2}}$ ranges over all cellular basis elements of
$ {\mathscr B}_{r, s}(1)$ and $d\in \mathscr D_{r,s}^1$.
So, \begin{equation}\label {gde}g_d=g_{1, i_1} g^*_{1, j_1}, \text{ for some $i_1, j_1\ge 1$.}\end{equation}  We remark that
the previous $e_2\cdots e_f$ is $1$ if $f=1$.

We claim that each $\sigma(g_{d_1} g_d) e^f$ can be written as a
linear combination of $\sigma (g_a) e^f $, $a\in \mathscr D^f_{r,
s}$. In fact, we  prove the similar result for $e^f g_{d_1} g_{d}$ and
use anti-involution to get our claim. We prove it by induction on
$f$ as follows.

If $f=1$, there is nothing to be proved.  In this case, $d_1=1$. If
$f=2$, we write $d_1=s_{2, i_2} s^*_{2, j_2}$ for some $i_2, j_2\ge
2$. Since we are assuming (\ref{gde}), there is   nothing to be
proved if  $i_2>i_1$. In this case,  $d_1d \in \mathscr D_{r, s}^2$.
So, we assume
 $i_2\le i_1$. By Lemma~\ref{tool}(e) and Definition~\ref{wbmw}(g), we have
$$e_1e_2 g_{2, i_2} g^*_{2, j_2} g_{1, i_1} g_{1, j_1}^*=e_1e_2
g_{2, i_1} g_{1, i_2-1} {g_{1}^*} g^*_{2, j_2} g_{1,
j_1}^*.$$
So, our claim for $f=2$ follows from the
special case of (\ref{gf1}). Using the result for $f=2$ repeatedly
yields the result for general $f$.

 Now, we count the dimension of
$\mathscr B_{r, s}e_1$. In fact, if we use walled Brauer algebra
$B_{r,s}$ (see, e.g. in \cite{CVDM}), the classical limit of $\mathscr B_{r, s}$ instead of it,
and if we use $s_i$ and $s_j^*$ instead of  $g_i$ and $g_j^*$ in
a basis of  $\mathscr B_{r, s}e_1$, by our previous arguments, we will get a corresponding basis for  $ B_{r,s}e_1$.
So,  both $\mathscr B_{r, s}e_1$  and  $ B_{r,s}e_1$ have the same
dimension. By \cite[Prop.~2.10]{CVDM},   the dimension of $
B_{r,s}e_1$ is $(r+s-1)!$. So is $\mathscr B_{r, s}e_1$.  By Lemma~\ref{eequ}(a),   $\phi: \tilde {\mathscr
B}_{r-1, s} \rightarrow  \mathscr B_{r, s}\mathfrak e$, which  sends $h$ to $ h \mathfrak e $,   $h\in \tilde {\mathscr
B}_{r-1, s}$ is an epimorphism as left $\tilde {\mathscr B}_{r-1,
s}$-modules. Comparing the dimensions of $ \tilde {\mathscr
B}_{r-1, s}$ and $\mathscr B_{r, s}\mathfrak e
$ yields the required isomorphism as left $\tilde {\mathscr B}_{r-1,
s}$-modules.

Note that $\tilde {\mathscr B}_{r-1, s}\supset \mathscr B_{r, s}(1)$. So,   $\tilde {\mathscr B}_{r-1, s}$ is a right
$ \mathscr B_{r, s}(1)$-module. By Corollary~\ref{leftright}, it is a right $\mathscr B_{r,s}(1) \mathfrak e$-module.
More explicitly, if $h\in \tilde {\mathscr B}_{r-1, s}$ and $x\mathfrak e\in \mathscr B_{r,s}(1) \mathfrak e$ with $x\in \mathscr B_{r,s}(1) $, then the right action of
$x\mathfrak e$ on  $h$ is $h x$.
 Since $\mathfrak e x \mathfrak e=x \mathfrak e$ for any $x\in  {\mathscr B}_{r, s}(1) $, it is routine to check that
 $\phi$ is  a homomorphism  as right $\mathscr
B_{r, s}(1)\mathfrak e$-modules. This completes the proof of (a). We remark that (b) can be proved similarly.
\end{proof}

We identify  $\tilde {\mathscr B}_{r-1, s}$ (resp.   $\tilde {\mathscr B}_{r, s-1}$ ) with $\mathscr B_{r-1, s}$ (resp. $\mathscr B_{r, s-1}$) in the remaining part of this section. The following result follows from Proposition~\ref{bim}.

\begin{Prop}\label{Gind} $\Res^L \circ \G=\Ind^R$ and $\Res^R \circ \G =\Ind^L$.
\end{Prop}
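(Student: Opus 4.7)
My plan is to derive both identities directly from the bimodule isomorphisms of Proposition~\ref{bim} by rewriting the defining tensor product of $\G$. For the first identity, I would take $\mathfrak e = f_{2,1}$ in the construction of $\G$. Then for any left $\mathscr B_{r,s}(1)$-module $M$,
$$\G(M) = \mathscr B_{r,s}\, f_{2,1} \otimes_{\mathscr B_{r,s}(1)} M.$$
Using Corollary~\ref{leftright}, I would identify $\mathscr B_{r,s}(1)$ with $\mathscr B_{r,s}(1)f_{2,1}$ via $h \mapsto h f_{2,1}$, so the tensor product can be taken over $\mathscr B_{r,s}(1)f_{2,1}$ instead (this is consistent with the right action $(v\mathfrak e)\circ h = vh\mathfrak e$ defined just before Lemma~\ref{functors}, since $\mathfrak e h \mathfrak e = h\mathfrak e$). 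Proposition~\ref{bim}(a) supplies a $(\tilde{\mathscr B}_{r-1,s},\,\mathscr B_{r,s}(1)f_{2,1})$-bimodule isomorphism $\mathscr B_{r,s}f_{2,1} \cong \tilde{\mathscr B}_{r-1,s}$, and substituting into the tensor product yields
$$\Res^L\bigl(\G(M)\bigr) \cong \tilde{\mathscr B}_{r-1,s} \otimes_{\mathscr B_{r,s}(1)f_{2,1}} M$$
as left $\tilde{\mathscr B}_{r-1,s}$-modules.

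Next, I would identify this with $\Ind^R(M)$ by tracing through the algebra isomorphisms. Lemma~\ref{subiso} gives $\tilde{\mathscr B}_{r-1,s} \cong \mathscr B_{r-1,s}$, and Lemma~\ref{subiso1} composed with Corollary~\ref{leftright} identifies $\mathscr B_{r,s}(1)f_{2,1}$ with the subalgebra of $\mathscr B_{r-1,s}$ playing the role of $\mathscr B_{r-1,s-1}$ in the definition of the restriction functor. Under these identifications the tensor product becomes $\mathscr B_{r-1,s}\otimes_{\mathscr B_{r-1,s-1}} M$, i.e.\ $\Ind^R(M)$.

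For the second identity $\Res^R\circ\G = \Ind^L$, I would run the parallel argument, taking $\mathfrak e = \tilde e_{1,2}$ and invoking Proposition~\ref{bim}(b) in place of (a), together with the corresponding instance of Lemma~\ref{subiso} giving $\tilde{\mathscr B}_{r,s-1}\cong\mathscr B_{r,s-1}$.

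The main obstacle will be the bookkeeping of the various algebra isomorphisms: verifying that the inclusion $\mathscr B_{r,s}(1)\hookrightarrow \tilde{\mathscr B}_{r-1,s}$ implicit in the right action in Proposition~\ref{bim}(a) matches, after applying the isomorphisms from Lemmas~\ref{subiso1} and~\ref{subiso}, the subalgebra inclusion used to define $\Ind^R$. In particular, one has to check that $e_2\in\mathscr B_{r,s}(1)$—which is not a listed generator of $\tilde{\mathscr B}_{r-1,s}$—actually lies in $\tilde{\mathscr B}_{r-1,s}$ via the identity $e_2 = g_1^{*}\bigl(g_1^{-1}e_1g_1\bigr)(g_1^{*})^{-1}$, and that this element translates correctly under the isomorphism to the generator of the inducing subalgebra. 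Once this compatibility is established, the proof reduces to associativity of the tensor product.
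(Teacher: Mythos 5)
Your overall plan matches what the paper intends (the paper's own proof is just ``follows from Proposition~\ref{bim}''): rewrite $\Res^L\G(M)$ as $\tilde{\mathscr B}_{r-1,s}\otimes_{\mathscr B_{r,s}(1)} M$ via the bimodule isomorphism of Proposition~\ref{bim}(a), and then match this against $\Ind^R(M)$ by chasing through Lemmas~\ref{subiso1} and~\ref{subiso}. You correctly flag the crucial compatibility check and the key identity $e_2 = g_1^*(g_1^{-1}e_1g_1)(g_1^*)^{-1}$, but then assert without verifying it that ``this element translates correctly under the isomorphism to the generator of the inducing subalgebra.'' It does not, at least not on the nose. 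The isomorphism $\theta\colon\tilde{\mathscr B}_{r-1,s}\to\mathscr B_{r-1,s}$ of Lemma~\ref{subiso} fixes $g_1^*$ and sends $g_1^{-1}e_1g_1\mapsto e_1$, so $\theta(e_2)=g_1^*e_1(g_1^*)^{-1}$, whereas the distinguished generator of the inducing subalgebra $\tilde{\mathscr B}_{r-1,s-1}$ in Lemma~\ref{subiso} is $(g_1^*)^{-1}e_1g_1^*$. These two elements are not equal: they differ by conjugation by $(g_1^*)^2$, and $(g_1^*)^2=1+(q-q^{-1})g_1^*$ does not commute with $e_1$ for generic $q$ (the discrepancy disappears only in the classical limit $q=1$). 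Consequently $\theta(\mathscr B_{r,s}(1))$ is a subalgebra of $\mathscr B_{r-1,s}$ isomorphic to, but not equal to, $\tilde{\mathscr B}_{r-1,s-1}$, so the two tensor products cannot be identified by ``associativity'' alone.

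To close this gap you need a further argument beyond bookkeeping: for instance, show that induction from the two conjugate copies of $\mathscr B_{r-1,s-1}$ inside $\mathscr B_{r-1,s}$ gives naturally isomorphic functors (the conjugating element must be shown to act compatibly, since $(g_1^*)^{\pm 2}$ does not centralize $g_2^*$), or adjust the convention in Lemma~\ref{subiso} (replace $(g_1^*)^{-1}e_1g_1^*$ by $g_1^*e_1(g_1^*)^{-1}$, which generates an isomorphic subalgebra) so that the images match literally, or simply verify the equality at the level of cell modules via the branching rule of Theorem~\ref{branching}, which is all that is used downstream in Proposition~\ref{semi1} and Theorem~\ref{semimain}.
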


Given an $(f, \lambda)\in \Lambda_{r, s}$ with $f>0$ and
$\lambda=(\lambda^{(1)}, \lambda^{(2)})$,
  $$ \mathscr
R(\lambda^{(1)})=\{p_i\mid 1\le i\le a\}, \text{ and }   \mathscr
A(\lambda^{(2)})=\{q_j\mid  1\le j\le b\}.$$
In the remaining part of this section, we always
use  $\alpha^{(i)}$ (resp. $\beta^{(j)}$ ) to denote
$(\lambda^{(1)}\setminus \{p_i\}, \lambda^{(2)})$ (resp.
$(\lambda^{(1)}, \lambda^{(2)}\cup \{q_j\})$). In other words,  $\alpha^{(i)}$ is the bipartition obtained from
$\lambda$ by removing the node $p_i$. Similarly, $\beta^{(j)}$ is the bipartition   obtained from
$\lambda$ by adding the node $q_j$.
We arrange $p_i$'s and $q_j$'s such that
\begin{equation} \label{lo} (f,\alpha^{(1)})\rhd(f,\alpha^{(2)})\rhd\cdots\rhd (f,\alpha^{(a)})
\rhd (f-1, \beta^{(1)})\rhd\cdots\rhd
(f-1,\beta^{(b)}).
\end{equation}

\begin{Lemma} \label{nk}  Suppose  $(f, \lambda)\in \Lambda_{r,s}$ with
$\lambda=(\lambda^{(1)}, \lambda^{(2)})$.  Let
  $y_{\alpha^{(k)}}= g_{r, a_k}\n_{\lambda} e^f +\mathscr{B}_{r,s}^{\rhd(f,\lambda)}\in
C(f,\lambda)$ where
$a_k=f+\sum_{j=1}^\ell \lambda^{(1)}_j$ if $p_k\in \mathscr R(\lambda^{(1)})$ with  $p_k=(\ell,
\lambda^{(1)}_\ell)$ for some $\ell$. Then, there is an epimorphism
 $C(f, \alpha^{(k)} )\twoheadrightarrow N_k/N_{k-1}$, where $N_k=\sum_{j=1}^k \mathscr{B}_{r-1,s} y_{\alpha^{(j)}}
$, $1\le k\le a$ and $N_0=0$.\end{Lemma}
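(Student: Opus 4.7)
The plan is to follow Murphy's branching argument for Specht modules of Hecke algebras~\cite{Mur}, transposed to the cellular setting of $\mathscr B_{r,s}$ restricted to the subalgebra $\tilde{\mathscr B}_{r-1,s}\cong\mathscr B_{r-1,s}$ of Lemma~\ref{subiso}. The crucial simplification is that $e^f$ and the second factor $\n_{\lambda^{(2)}}$ both commute with every element of $\H_{r-1}$ (by Lemma~\ref{tool}(a) and Definition~\ref{wbmw}(g)), so the branching happens entirely inside the Hecke factor carrying $\n_{\lambda^{(1)}}$.

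First I would single out the cyclic generator $v_k := \n_{\alpha^{(k)}}e^f+\mathscr B_{r-1,s}^{\rhd(f,\alpha^{(k)})}$ of $C(f,\alpha^{(k)})$ — corresponding to $\s=\ts^{\alpha^{(k)}}$ and $d=1$ in the basis of Proposition~\ref{cell-m}(b) — and define a candidate map
$$
\phi_k\colon C(f,\alpha^{(k)})\longrightarrow N_k/N_{k-1},\qquad h\cdot v_k\longmapsto h\,y_{\alpha^{(k)}}+N_{k-1},\quad h\in\mathscr B_{r-1,s}.
$$
Surjectivity is then built into the very definition of $N_k$. The entire content of the lemma is well-definedness, i.e.\ the implication $h\n_{\alpha^{(k)}}e^f\in\mathscr B_{r-1,s}^{\rhd(f,\alpha^{(k)})}\Longrightarrow hg_{r,a_k}\n_\lambda e^f\in N_{k-1}+\mathscr B_{r,s}^{\rhd(f,\lambda)}$.

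The key technical step is the classical Murphy-type branching identity in $\H_r$: for each $h\in\H_{r-1}$ there exist $\tilde h\in\H_{r-1}$ and $h_1,\dots,h_{k-1}\in\H_r$ such that
$$
h\,g_{r,a_k}\,\n_{\lambda^{(1)}}\;\equiv\;g_{r,a_k}\,\tilde h\,\n_{\lambda^{(1)}}\;+\;\sum_{j<k}g_{r,a_j}\,h_j\,\n_{\lambda^{(1)}}\pmod{\H_r^{\rhd\lambda^{(1)}}},
$$
with $\tilde h\,\n_{\lambda^{(1)}\setminus\{p_k\}}\equiv h\,\n_{\lambda^{(1)}\setminus\{p_k\}}\pmod{\H_{r-1}^{\rhd(\lambda^{(1)}\setminus\{p_k\})}}$; the dominance order~\eqref{lo} on the removable nodes guarantees that the error summands correspond only to $\alpha^{(j)}$ with $j<k$. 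Right-multiplying by $\n_{\lambda^{(2)}}e^f$ and using that $\H_r^{\rhd\lambda^{(1)}}\,\n_{\lambda^{(2)}}e^f\subseteq\mathscr B_{r,s}^{\rhd(f,\lambda)}$ (immediate from Theorem~\ref{celb} and the definition of the cellular order on $\Lambda_{r,s}$) gives
$$
hg_{r,a_k}\n_\lambda e^f\;\equiv\;g_{r,a_k}\tilde h\n_\lambda e^f+\sum_{j<k}g_{r,a_j}h_j\n_\lambda e^f\pmod{\mathscr B_{r,s}^{\rhd(f,\lambda)}}.
$$
If $h$ annihilates $v_k$ then, by the same compatibility, the first term on the right lies in $\mathscr B_{r,s}^{\rhd(f,\lambda)}$, and each summand in the second term visibly lies in $\mathscr B_{r-1,s}\,y_{\alpha^{(j)}}\subseteq N_{k-1}$, which is what we need.

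The main obstacle is precisely justifying this last assertion — that $g_{r,a_j}h_j\n_\lambda e^f$ really lies in $\mathscr B_{r-1,s}\,y_{\alpha^{(j)}}$ modulo $\mathscr B_{r,s}^{\rhd(f,\lambda)}$. Since $g_{r,a_j}$ need not commute with $h_j$, one must commute $h_j$ back past $g_{r,a_j}$ using braid relations, producing further error terms that have to be absorbed either into $\mathscr B_{r,s}^{\rhd(f,\lambda)}$ (via the cellular filtration of Theorem~\ref{celb}) or into $N_i$ with $i<j$ (invoking the induction on $k$). Once this bookkeeping is carried through, $\phi_k$ is well-defined, surjectivity is automatic, and the lemma follows.
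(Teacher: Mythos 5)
Your proof takes essentially the same route as the paper: both reduce the problem to the classical branching rule for cell (Specht) modules of the Hecke algebra $\H_{r-f}$ acting through the first component $\n_{\lambda^{(1)}}$, and both realize the would-be epimorphism as right multiplication by a fixed element carrying $\n_{\alpha^{(k)}}e^f$ to $y_{\alpha^{(k)}}$. The paper makes this concrete with the explicit factorization $y_{\alpha^{(k)}} = \n_{\alpha^{(k)}} e^f h$, $h = g_{r,a_k}\sum_{i=b_k+1}^{a_k}(-q)^{a_k-i}g_{a_k,i}$, which is the explicit form of what you are gesturing at.

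The organizational difference is worth noting, because it is exactly where your proposed argument still has an open bookkeeping issue. You set up the map as $h'\cdot v_k\mapsto h'\,y_{\alpha^{(k)}}$ and then must prove well-definedness, which forces you to commute the error terms $h_j$ of the Murphy identity past $g_{r,a_j}$ and absorb the resulting mess; you flag this yourself as "the main obstacle." The paper avoids carrying out that commutation by working in the opposite order: first it uses Corollary~\ref{tool2} to peel off the $\sigma(g_d)$ with $d\in\mathscr D_{r-1,s}^f$, then reduces $\mathscr B_{r-1,s}(f)$ to its Hecke quotient via $he^f\equiv\psi_f(h)e^f\pmod{\mathscr B_{r,s}^{f+1}}$, and only then invokes the Hecke branching rule in the quotient $\H_{r-1-f}\otimes\H_{s-f}$. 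At that point the error terms are by construction lower in the filtration (either in $\mathscr B_{r,s}^{\rhd(f,\lambda)}$ or in $N_{k-1}$), so the spanning set $\{\sigma(g_d)\sigma(g^*_{d(\t)})\sigma(g_{d(\v)})y_{\alpha^{(k)}}+N_{k-1}\}$ for $N_k/N_{k-1}$ falls out directly, with no extra commutation lemma to prove. In short: your Murphy identity, if quoted in the module-filtration form (rather than as a coefficient identity for individual $h$), already contains the absorption of the error terms, and that is precisely the formulation the paper cites from Mathas. So the obstacle you identify is not a genuine new difficulty — it is the content of the reference — but as written your proof has a gap there, whereas the paper's order of operations closes it.
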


\begin{proof} Since  $y_{\alpha^{(k)}}\in C(f, \lambda)$, we have $N_k\subset C(f,
\lambda)$.  By Corollary~\ref{tool2},  $\mathscr B_{r-1, s} y_{\alpha^{(k)}}$ is spanned by $
\sigma(g_d)  \mathscr B_{r-1, s}(f)  y_{\alpha^{(k)}} $, $d\in \mathscr
D_{r-1, s}^f$.

 On the other hand, we have  $h e^f \equiv
\psi_f(h) e^f \pmod {\mathscr B_{r,s}^{f+1}}$ for any $h\in
{\mathscr B}_{r-1, s}(f)$, where $ \psi_f: {\mathscr B}_{r-1,
s}(f)\rightarrow \H_{r-1-f}\otimes \H_{ s-f}$ is the epimorphism with
kernel $\langle e_{f+1}\rangle$, the two-sided ideal of ${\mathscr
B}_{r-1, s}(f)$ generated by $e_{f+1}$. Using the branching rule for
the cell module $C(\lambda^{(1)})$ for the Hecke algebra $ \H_{r-f}$
(see, e.g. \cite{Ma}), we have that $N_k/N_{k-1}$ is spanned by all
$ \sigma (g_{d}) \sigma ({g^*_{d(\t)}}) \sigma (g_{d(\v)})
y_{\alpha^{(k)}}+N_{k-1}$
 where $\v\in \Std(\lambda^{(1)}\setminus \{p_k\})$,
 $\t\in
\Std(\lambda^{(2)})$ and $d\in \mathscr D_{r-1,
s}^f=\{d\in\mathscr{D}_{r,s}^f|(r) d = r \}$. Note that $
y_{\alpha^{(k)}}=\n_{\alpha^{(k)}} e^f h$, where
$$h=g_{r, a_k} \sum_{i=b_k+1}^{a_k} (-q)^{a_k-i} g_{a_k, i}\in \mathscr
B_{r, s},$$ where $b_k=f+\sum_{i=1}^{\ell-1}\lambda^{(1)}_i$.
So, the required epimorphism sends $\sigma (g_{d}) \sigma
({g^*_{d(\t)}}) \sigma(g_{d(\v)})\n_{\alpha^{(k)}} e^f$ to $\sigma
(g_{d}) \sigma ({g^*_{d(\t)}}) \sigma(g_{d(\v)})\n_{\alpha^{(k)}}
e^f h$.  \end{proof}

We need some combinatorial preparations before we prove the result on
the branching rule for $\mathscr B_{r, s}$. This is motivated by Enyang's work on Birman-Murakami-Wenzl algebras in \cite{Enyang1}

Recall that a composition $\mu$ of $n$ is a sequence of non-negative
integers $(\mu_1, \mu_2, \ldots)$ with $\sum_i \mu_i=n$. Given  a
partition $\lambda$ and a composition $\mu$ of $n$,  a
$\lambda$-tableau $\ssS$ of {\it content} (or {\it type}) $\mu$ is
the tableau obtained from $Y(\lambda)$ by inserting each box with
numbers $i, 1\le i\le n$, such that the number $i$ occurring in
$\ssS$ is $\mu_i$. If the entries in $\ssS$ are weakly increasing in
each row  and strictly increasing in each column, $\ssS$ is called a
semi-standard $\lambda$-tableau of content $\mu$. Let
$\bfT^{ss}(\la,\mu)$ be the set of all semi-standard
$\lambda$-tableaux of content $\mu$. If $\bfT^{ss}(\lambda,
\mu)\neq\emptyset$, then $\lambda\unrhd\mu$.

Let $\s$ be a $\lambda$-tableau and let $\mu$ be a composition. Then
$\mu(\s)$ is the $\lambda$-tableau of type $\mu$ which is obtained
from $\s$ by replacing each entry $i$  in $\s$ by $j$ if $i$ appears
in row $j$ of $\t^\mu$.

Suppose $\s\in \Std(\lambda)$ and $\lambda\in \Lambda^+(n)$. Let
$\s\!\downarrow_i$ be obtained from $\s$ by removing all entries
which are strictly bigger than $i$. Then $\s\!\downarrow_i$ is a
standard $\mu$-tableau for some partition $\mu\in \Lambda^+(i)$. In
this case, we use $\s_i$ instead of $\mu$.

The following result, which has already been used in the proof of
\cite[Coro.~5.4]{Enyang1}, can be verified, easily.

\begin{Lemma}\label{big} Assume  $\lambda, \mu\in \Lambda^+(n)$ with
$\mu_i=1$ and $i=l(\mu)$. Suppose $\ssS\in \bfT^{ss}(\lambda, \mu)$
and $\s\in \Std(\lambda)$ such that $\mu(\s)=\ssS$. Then
$\s_{n-1}\unrhd \nu$ where $\nu$ is obtained from $\mu$ by removing
the removable node with maximal row index. Further, $\s_{n-1}=\nu$
if and only if $\lambda $ is obtained from $\nu$ by adding an
addable node.
\end{Lemma}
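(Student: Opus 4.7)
The plan is to reduce everything to a partial-sum comparison. First, since $\mu_i=1$ and $i=l(\mu)$, row $i$ of $\t^\mu$ contains only the entry $n$; therefore the unique occurrence of the value $i$ in $\ssS=\mu(\s)$ marks the cell of $\s$ that houses $n$. The semistandardness of $\ssS$ (columns strictly increasing; rows weakly increasing, with $i$ the largest entry appearing) forces this $i$ to sit at the rightmost cell of some row $r$ with $r\leq i$, say at position $(r,\lambda_r)$. Hence $\s_{n-1}$ has shape $(\lambda_1,\dots,\lambda_{r-1},\lambda_r-1,\lambda_{r+1},\dots)$.

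Next, I would verify $\s_{n-1}\unrhd\nu$ by comparing the partial sums $A_k=\sum_{j=1}^{k}(\s_{n-1})_j$ with $B_k=\sum_{j=1}^{k}\nu_j$ in three ranges. For $k<r$, $A_k=\sum_{j=1}^{k}\lambda_j\geq\sum_{j=1}^{k}\mu_j=B_k$, using $\lambda\unrhd\mu$, which holds because $\bfT^{ss}(\lambda,\mu)\neq\emptyset$. For $k\geq i$, both $A_k$ and $B_k$ equal $n-1$ (note $\lambda\unrhd\mu$ forces $l(\lambda)\leq l(\mu)=i$). The essential case is $r\leq k<i$, where $A_k=\sum_{j=1}^{k}\lambda_j-1$; by semistandardness every entry $\leq k$ of $\ssS$ lies in rows $\leq k$, so the first $k$ rows of $\ssS$ contain $\sum_{j=1}^{k}\mu_j$ entries $\leq k$ and therefore $\sum_{j=1}^{k}\lambda_j-\sum_{j=1}^{k}\mu_j$ entries strictly larger than $k$. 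Since the cell holding $i$ is itself one such entry (it lies in row $r\leq k$ and has value $i>k$), this count is at least $1$, giving $\sum_{j=1}^{k}\lambda_j\geq\sum_{j=1}^{k}\mu_j+1$, hence $A_k\geq B_k$.

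For the equality clause, $\s_{n-1}=\nu$ means every partial-sum inequality becomes an equality, and unraveling yields $\lambda_j=\nu_j$ for $j\neq r$ together with $\lambda_r-1=\nu_r$. Thus $\lambda$ is obtained from $\nu$ by inserting the single cell $(r,\nu_r+1)$, which is automatically an addable node of $\nu$ because $\lambda$ is itself a partition; the converse direction follows by reading the same identities backwards and noting that the row $r$ housing $i$ in $\ssS$ must match the row of the added node, since otherwise the strict inequality above would persist.

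The main obstacle I expect is the middle range $r\leq k<i$, in which the \emph{lost} cell $(r,\lambda_r)$ passing from $\lambda$ to $\s_{n-1}$ and the \emph{lost} cell $(i,1)$ passing from $\mu$ to $\nu$ must be balanced against one another. The semistandardness-based counting of entries exceeding $k$ in the first $k$ rows is precisely what supplies the required extra $+1$ in the dominance inequality, and packaging this observation cleanly is the crux of the argument.
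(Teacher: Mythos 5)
The paper offers no proof of this lemma (it is attributed to Enyang and declared ``easily verified''), so there is nothing in the source to compare against; I will assess your argument on its own. Your proof of the dominance $\s_{n-1}\unrhd\nu$ is correct and cleanly organized. The counting step in the range $r\le k<i$ is exactly the crux, and you execute it properly: column strictness confines all entries $\le k$ of $\ssS$ to the first $k$ rows, so those rows contain exactly $\sum_{j\le k}\mu_j$ such entries and hence $\sum_{j\le k}\lambda_j-\sum_{j\le k}\mu_j$ entries exceeding $k$, at least one of which is the cell housing $i$; this supplies the needed $+1$. Your proof of the forward implication, that $\s_{n-1}=\nu$ forces $\lambda$ to be $\nu$ plus an addable node, is also correct --- and this is the only direction actually used later, in the proof of Lemma~\ref{eqi0}.

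The converse implication, however, is false as stated, and your handwave ``otherwise the strict inequality above would persist'' does not repair it. Take $\mu=(2,1,1)$ (so $n=4$, $i=3$, $\nu=(2,1)$) and $\lambda=(3,1)$, with $\ssS$ the semistandard $\lambda$-tableau whose first row is $1,1,2$ and whose second row is $3$. The corresponding standard $\s$ has first row $1,2,3$ and second row $4$, so $\s_3=(3)\ne(2,1)=\nu$, even though $\lambda$ is obtained from $\nu$ by adding the addable node $(1,3)$. Your argument misidentifies where the slack sits: if $n$ lies in row $r$ of $\s$ while the node is added to $\nu$ in a row $r_0<r$, then the equalities $A_k=B_k$ can fail already for $k<r$ --- the range controlled only by $\lambda\unrhd\mu$ --- not in the middle range $r\le k<i$ your counting governs. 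So ``reading the identities backwards'' does not pin down $r=r_0$. The lemma should be read (and proved) as one-directional: if $\lambda$ is not obtained from $\nu$ by adding an addable node, then $\s_{n-1}\rhd\nu$; this is what is true, what your counting establishes, and what the paper uses.
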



\begin{Lemma}\label{zbeta} For each $k$, $1\le k\le b$, define
$$z_{\beta^{(k)}} = \sum_{j=d_k}^{c_k} (-q)^{j-c_k} g^*_{j, c_k}  (g^*_{f, c_k})^{-1} g_{r, f}\n_\lambda
e^f+ \mathscr{B}_{r,s}^{\rhd(f,\lambda)}\in C(f,\lambda),$$ where   $c_k=f+\sum_{j=1}^\ell \lambda_j^{(2)}$,  $d_k=f+\sum_{j=1}^{\ell-1}
 \lambda^{(2)}_j+1$
if
 $\beta^{(k)}=(\lambda^{(1)},  \lambda^{(2)}\cup \{q_k\})$ with  $q_k=(\ell, \lambda_\ell^{(2)}+1)\in \mathscr A(\lambda^{(2)})$.
 Then  $z_{\beta^{(k)}}\in
\mathscr B_{r-1, s} z_{\beta^{(b)}}$ for any $k, 1\le k\le b$.\end{Lemma}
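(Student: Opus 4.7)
The plan is reverse induction on $k$. The base case $k=b$ is trivial. For the inductive step from $k+1$ down to $k$, the goal is to exhibit an explicit element $h_k$ in the subalgebra of $\mathscr{B}_{r-1,s}$ generated by the $g_j^*$'s for which
\[
h_k\, z_{\beta^{(k+1)}} \equiv z_{\beta^{(k)}} + \sum_{j>k} \alpha_{k,j}\, z_{\beta^{(j)}} \pmod{\mathscr{B}_{r,s}^{\rhd(f,\lambda)}}
\]
for some scalars $\alpha_{k,j}\in R$. Viewed in the cell module $C(f,\lambda)$, the induction hypothesis places every $z_{\beta^{(j)}}$ with $j>k$ inside $\mathscr{B}_{r-1,s}\, z_{\beta^{(b)}}$; together with $h_k\in \mathscr{B}_{r-1,s}$, this forces $z_{\beta^{(k)}}$ into the same submodule.

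Write $q_k=(\ell,\lambda^{(2)}_\ell+1)$ and $q_{k+1}=(\ell',\lambda^{(2)}_{\ell'}+1)$ with $\ell<\ell'$, so that $c_k<c_{k+1}$. Up to the fixed tail $g_{r,f}\n_\lambda e^f$, the only difference between $z_{\beta^{(k)}}$ and $z_{\beta^{(k+1)}}$ is that the column index $c_{k+1}$ has been slid down to $c_k$ and the summation range retracted accordingly. The natural candidate for $h_k$ is therefore a product of adjacent $g_j^*$'s realizing this shift, adjusted by the appropriate antisymmetrization factor so that the computation reduces to one internal to the Hecke algebra $\H_s$.

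The key algebraic input is a Garnir-type identity for the antisymmetrizer $\n_{\lambda^{(2)}}$: combining the quadratic relation $(g^*_j-q)(g^*_j+q^{-1})=0$ with $\n_{\lambda^{(2)}}g^*_j=-q^{-1}\n_{\lambda^{(2)}}$ for $s^*_j\in \mathfrak{S}_{\lambda^{(2)}}$, and performing standard braid manipulations, produces exactly the required sum $\sum_{j=d_k}^{c_k}(-q)^{j-c_k}g^*_{j,c_k}$ with the correct coefficients, alongside correction terms. These corrections split into two categories: (i) contributions whose cellular-basis expansion involves a semistandard tableau of shape $\mu\rhd\lambda$, and so land inside $\mathscr{B}_{r,s}^{\rhd(f,\lambda)}$; and (ii) contributions matching $z_{\beta^{(j)}}$ (up to scalars) for some $j>k$, arising when an unabsorbed $g_j^*$ effectively places the added node at an addable position of $\lambda^{(2)}$ strictly below $q_k$ in the ordering \eqref{lo}. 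The dominance-order criterion powering category (i) is the content of the argument behind Lemma~\ref{big}, applied to $\lambda^{(2)}$.

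The main technical obstacle will be the bookkeeping of this decomposition. One must track how $h_k$ threads past the fixed prefix $(g^*_{f,c_{k+1}})^{-1}g_{r,f}$ and the idempotent $e^f$. The commutation relations $g^*_j e_i=e_i g^*_j$ for $i\neq j-1,j$ from Definition~\ref{wbmw}(k) and Lemma~\ref{tool}(a), together with $g^*_ig_k=g_k g^*_i$ from Definition~\ref{wbmw}(g), let $h_k$ pass harmlessly through this prefix and through $e^f$. Once that is accomplished, the whole computation collapses to a manipulation internal to $\H_s$, where the Garnir identities are classical; the ensuing case analysis is what ensures that no correction terms escape the two controlled categories, thereby closing the induction.
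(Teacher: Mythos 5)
The paper's proof is a one-line exact computation; your proposal imports machinery that isn't needed and leaves a gap it never closes. The key point you're missing is that nothing has to be \emph{produced} by Garnir-type identities: the sum $\sum_{j=d_k}^{c_k}(-q)^{j-c_k}g^*_{j,c_k}$ is already sitting in the definition of $z_{\beta^{(k)}}$. All that is required is to split off the tail with the elementary factorisation $g^*_{f,s}=g^*_{f,c_k}\,g^*_{c_k,s}$, hence $(g^*_{f,c_k})^{-1}=g^*_{c_k,s}(g^*_{f,s})^{-1}$, which gives the exact identity in $C(f,\lambda)$
\[
z_{\beta^{(k)}} \;=\; \Bigl(\sum_{j=d_k}^{c_k}(-q)^{j-c_k}\,g^*_{j,c_k}\,g^*_{c_k,s}\Bigr)\, z_{\beta^{(b)}}.
\]
The bracketed element involves only the $g^*_i$ with $1\le i\le s-1$, so it lies in $\mathscr B_{r-1,s}$, and the lemma follows. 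There is no induction on $k$, no correction terms, no semistandard tableaux or appeal to Lemma~\ref{big}, and nothing that has to ``land in'' $\mathscr B_{r,s}^{\rhd(f,\lambda)}$.

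By contrast, your reverse induction from $k+1$ to $k$ is actually harder than going directly from $b$ to $k$: you would have to undo the prefix $(g^*_{f,c_{k+1}})^{-1}$ and install $(g^*_{f,c_k})^{-1}$ by left multiplication, and you give no reason why an $h_k\in\mathscr B_{r-1,s}$ achieving this (even up to your category-(i) and category-(ii) corrections) should exist. You acknowledge the gap yourself (``the main technical obstacle will be the bookkeeping of this decomposition''), but that bookkeeping is never done, and the proposed classification of correction terms is asserted rather than established. Your instinct that the operator premultiplying $z_{\beta^{(b)}}$ should be built from the $g^*_j$'s was right; the Garnir/dominance scaffolding you erected around it is solving a problem the lemma does not pose, and as written the argument does not go through.
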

\begin{proof} By definition,  \begin{equation}\label{zb} z_{\beta^{(b)}}  =  g_{r,f} (g^*_{f,
 s})^{-1}  \n_{\lambda} e^{f} +\mathscr B_{r, s}^{\rhd(f, \lambda)}.\end{equation}
 It is routine to check $z_{\beta^{(k)}} =\sum_{j=d_k}^{c_k} (-q)^{j-c_k} g_{j, c_k}^*
 g_{c_k, s}^*z_{\beta^{(b)}}\in
\mathscr B_{r-1, s} z_{\beta^{(b)}}$ for any $k, 1\le k\le b$.\end{proof}

\begin{Lemma}\label{eqi0}
Suppose $(f-1, \mu)\in \Lambda_{r-1, s}$ with
$\mu=(\lambda^{(1)}, \mu^{(2)})\unrhd \beta^{(b)}$
and $\mu\neq \beta^{(i)}$, $1\le i\le b$.  Write $\beta^{(b)}=(\lambda^{(1)}, \nu)$.
If $\t\in \Std(\lambda^{(1)})$ and  $\s\in \nu^{-1} (\ssS)$ with $\ssS\in \bfT^{ss}(\mu^{(2)}, \nu)$, then
$\n_{\mu^{(2)}} g^*_{d(\s) }
 (g^*_{f, s})^{-1} g_{r, f} \n_{\t\t^{\lambda^{(1)}}} e^f \in  \mathscr B_{r, s}^{\rhd (f, \lambda)}$.
\end{Lemma}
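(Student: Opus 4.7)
We aim to prove
$$Y := \n_{\mu^{(2)}}g^*_{d(\s)}(g^*_{f,s})^{-1}g_{r,f}\n_{\t\t^{\lambda^{(1)}}}e^f \in \mathscr{B}_{r,s}^{\rhd(f,\lambda)}.$$
Since $Y$ lies in $\mathscr{B}_{r,s}^f$ already, it suffices to show that modulo $\mathscr{B}_{r,s}^{f+1}$, $Y$ expands in the cellular basis of Theorem~\ref{celb} using only basis elements indexed by bipartitions $(\lambda^{(1)},\eta) \in \Lambda_{r,s}^f$ with $\eta \rhd \lambda^{(2)}$. The proof splits into a combinatorial step establishing the required dominance of shapes via Lemma~\ref{big} and an algebraic step propagating this dominance through the cellular basis.

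\emph{Combinatorial step.} By the ordering (\ref{lo}) on $\mathscr{A}(\lambda^{(2)})$, $q_b$ is the addable node with largest row index, so it lies in a new row. Hence $\nu = \lambda^{(2)} \cup \{q_b\}$ satisfies $\nu_{l(\nu)} = 1$, and Lemma~\ref{big} applied to $(\mu^{(2)}, \nu, \ssS, \s)$ gives $\s_{s-1} \unrhd \lambda^{(2)}$. The equality case would force $\mu^{(2)}$ to be obtained from $\lambda^{(2)}$ by adding an addable node, i.e., $\mu = \beta^{(j)}$ for some $j$, contradicting the hypothesis. Therefore $\s_{s-1} \rhd \lambda^{(2)}$ strictly.

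\emph{Algebraic step.} By Lemma~\ref{tool}(a) and Definition~\ref{wbmw}(d),(k), each of $g_f,\ldots,g_{r-1}$ and $g^*_f,\ldots,g^*_{s-1}$ commutes with $e_1,\ldots,e_{f-1}$, so
$$Y=\n_{\mu^{(2)}}g^*_{d(\s)}e^{f-1}(g^*_{f,s})^{-1}g_{r,f}\n_{\t\t^{\lambda^{(1)}}}e_f.$$
The factor $\n_{\mu^{(2)}}g^*_{d(\s)}$ is a Murphy-type cellular basis element of the $*$-Hecke subalgebra on positions $f,\ldots,s$, representing a vector in the cell module of shape $\mu^{(2)}$ of size $s-f+1$. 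Right multiplication by the cycle $(g^*_{f,s})^{-1}$, which sends the largest entry $s$ to the position $f$, followed by absorption of $e_f$ via Lemma~\ref{tool}(d)--(f) and the rewriting techniques used in the proof of Proposition~\ref{tool1}, passes from this size-$(s-f+1)$ cell module to a filtration by Hecke cell modules of size $s-f$; the relevant filtration piece corresponds to the partition obtained by removing from $\mu^{(2)}$ the box containing $s$ of $\s$, namely $\s_{s-1}$. Consequently, modulo $\mathscr{B}_{r,s}^{f+1}$, $Y$ expands in the cellular basis of Theorem~\ref{celb} using only basis elements indexed by bipartitions $(\lambda^{(1)},\eta)$ with $\eta \unrhd \s_{s-1}$. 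Combined with the combinatorial step, each such $(\lambda^{(1)},\eta) \rhd \lambda$, so $Y \in \mathscr{B}_{r,s}^{\rhd(f,\lambda)}$.

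\emph{Main obstacle.} The technical crux is the ``absorption of $e_f$'' step: making precise the transition from a Murphy element of shape $\mu^{(2)}$ of size $s-f+1$ to cellular basis elements indexed by partitions of size $s-f$, via the combined action of the cycle $(g^*_{f,s})^{-1}$ and the wall element $e_f$, and showing that the shapes appearing dominate $\s_{s-1}$. Once that bookkeeping is in place, the strict dominance $\s_{s-1} \rhd \lambda^{(2)}$ from the combinatorial step immediately delivers the membership.
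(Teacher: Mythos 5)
Your combinatorial step matches the paper's exactly: apply Lemma~\ref{big} with the observation that $q_b$ adds a new row to $\lambda^{(2)}$, obtaining $\s_{s-1}\rhd\lambda^{(2)}$ strictly since $\mu$ is not one of the $\beta^{(i)}$.

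The algebraic step, however, is where your argument has a genuine gap, and you acknowledge as much by labelling it the ``main obstacle.'' What you sketch is also not the mechanism the paper uses. You attribute the passage from a size-$(s-f+1)$ shape to a size-$(s-f)$ shape to ``absorption of $e_f$'' via Lemma~\ref{tool}(d)--(f), but in the paper's proof $e_f$ (and $e^f$) play no active role whatsoever in that reduction. The entire decrement happens inside the $*$-Hecke algebra: one writes $d(\s)=s_{b_k,s}\,d(\u)$ with $\u=\s\!\downarrow_{s-1}$ (here $b_k$ records the row of $\s$ containing $s$), and then a short Murphy-type commutation identity yields
$$\n_{\mu^{(2)}}\,g^*_{d(\s)}\,(g^*_{f,s})^{-1}\;=\;\sum_{j=b_{k-1}+1}^{b_k}(-q)^{j-b_k}\,g^*_{j,b_k}\,(g^*_{f,b_k})^{-1}\,\n_{\s_{s-1}}\,g^*_{d(\bar\u)},$$
where $\bar\u$ is $\u$ with entries shifted up by one. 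The global cycle $(g^*_{f,s})^{-1}$ is exactly what effects the index shift from $\{f,\dots,s-1\}$ to $\{f+1,\dots,s\}$; the surplus entry $s$ is absorbed into the harmless left factor $g^*_{j,b_k}(g^*_{f,b_k})^{-1}$, not into $e_f$. Once $\n_{\s_{s-1}}$ appears, since $\s_{s-1}\rhd\lambda^{(2)}$ and $e^f\n_{(\lambda^{(1)},\s_{s-1})}$ generates the ideal $\mathscr B_{r,s}^{\unrhd(f,(\lambda^{(1)},\s_{s-1}))}\subseteq\mathscr B_{r,s}^{\rhd(f,\lambda)}$, membership follows immediately; no branching-filtration bookkeeping for the big ideal $\mathscr B_{r,s}^{f+1}$ is needed. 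So the conceptual target you identify (reduce to $\s_{s-1}$ and invoke strict dominance) is correct, but the route you propose through $e_f$ and Proposition~\ref{tool1}-style rewriting is both unestablished and not how the reduction actually works.
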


\begin{proof}
For any $\s\in \nu^{-1}(\ssS)$ let $\u=\s\!\!\downarrow_{s-1}$. By
Lemma~\ref{big}, $\s_{s-1}\rhd \lambda^{(2)}$. For the
simplification of notation, we denote $\s_{s-1}$ by $\tau$. We write
$d(\s)=s_{b_k, s} d(\u)$ where  $b_k=f-1+\sum_{i=1}^k \mu^{(2)}_i$
if we assume that  $s$ is in the $k$th row of $\s$. So,
$$\n_{\mu^{(2)}} g^*_{d(\s)} (g^*_{f, s})^{-1}=\sum_{j=b_{k-1}+1}^{b_k}(-q)^{j-b_k} g^*_{j, b_k}   (g^*_{f, b_k})^{-1} \n_\tau g^*_{d(\bar\u)}, $$
where $\bar \u$ is obtained by using $i$ instead of  $i-1$ in $\u$ for all possible $i$'s. Now, the result follows from $\tau \rhd \lambda^{(2)}$.
\end{proof}

\begin{Lemma}\label{farc} For any  $b\in \mathscr{B}_{r-1,s}  e^{f-1}  \cap\mathscr {B}_{r-1,s}^f$, \begin{equation}\label{keyy}b \n_{\beta^{(b)}}(g_{f, s}^*)^{-1} g_{r, f}  e_f+\mathscr B_{r, s}^{\rhd{f, \lambda}}\in N_a.\end{equation}
\end{Lemma}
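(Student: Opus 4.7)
The plan is to reduce the statement, via a cellular expansion of $b$ and commutation properties of $e^{f-1}$, to a form where Lemma~\ref{eqi0} can be invoked to kill most contributions, leaving only terms that can be matched with the generators $y_{\alpha^{(k)}}$ of $N_a$.

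First I would write $b = b_0 e^{f-1}$ using the hypothesis $b\in\mathscr{B}_{r-1,s}\, e^{f-1}$, and exploit Lemma~\ref{tool}(a),(g) to commute $e^{f-1}$ past the factors $\n_{\beta^{(b)}}$, $(g^*_{f,s})^{-1}$, and $g_{r,f}$ (which involve only generators indexed $\ge f$) and merge $e^{f-1}e_f = e^f$. This reduces the expression to
\begin{equation*}
b\,\n_{\beta^{(b)}}(g^*_{f,s})^{-1} g_{r,f}\, e_f \;=\; b_0\,\n_{\beta^{(b)}}(g^*_{f,s})^{-1} g_{r,f}\, e^f.
\end{equation*}

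Next, I would use Theorem~\ref{celb} to expand $b$ modulo $\mathscr{B}_{r-1,s}^{f+1}$ as a linear combination of cellular basis elements of cell type $(f,\mu)$ with $\mu\in\Lambda^f_{r-1,s}$ (the condition $b\in\mathscr{B}_{r-1,s}^f$ guaranteeing that only level-$f$ terms appear). By linearity it suffices to treat a single such basis element. Multiplication by $\n_{\beta^{(b)}}(g^*_{f,s})^{-1} g_{r,f}\, e^f$ is then expanded using the Murphy-type multiplication rules for the two Hecke subalgebras---one rule for each component of the bipartition---producing a sum of terms whose second-component factor has precisely the shape $\n_{\mu'^{(2)}} g^*_{d(\s)} (g^*_{f,s})^{-1} g_{r,f} \n_{\t\t^{\lambda^{(1)}}} e^f$ treated in Lemma~\ref{eqi0}, with $\mu'=(\lambda^{(1)},\mu'^{(2)})\unrhd \beta^{(b)}$.

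Applying Lemma~\ref{eqi0}, every term with $\mu'\neq \beta^{(i)}$ for all $i$ falls into $\mathscr{B}_{r,s}^{\rhd(f,\lambda)}$ and can be discarded. For the surviving contributions I would invoke the Garnir-type identity \eqref{gf1}, the semistandard-tableau analysis underlying Lemma~\ref{big}, and the explicit form $y_{\alpha^{(k)}}=g_{r,a_k}\,\n_\lambda\, e^f$ from Lemma~\ref{nk} to identify each remaining product, up to the left action of $\mathscr{B}_{r-1,s}$, as a multiple of some $y_{\alpha^{(k)}}$; this puts the whole expression into $N_a$. The hard part will be this last step: the combinatorial bookkeeping required to match the index shifts induced by $g_{r,f}$ and $(g^*_{f,s})^{-1}$ against the shape $(\lambda^{(1)},\lambda^{(2)}\cup\{q_b\})$ of $\beta^{(b)}$, and the repeated Garnir-type reductions needed to bring each surviving product into the canonical form $g_{r,a_k}\,\n_\lambda\, e^f$ defining the generators of $N_a$.
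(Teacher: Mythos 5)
Your opening reduction (commuting $e^{f-1}$ past $\n_{\beta^{(b)}}$, $(g^*_{f,s})^{-1}$, $g_{r,f}$ and merging with $e_f$ to get $e^f$) is sound, but the remainder of your plan departs from what actually makes this lemma work, and I don't think it can be completed as stated.

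The central misstep is the proposal to invoke Lemma~\ref{eqi0}. That lemma is the complementary tool in the proof of Lemma~\ref{ab}: there, one splits $x\in\mathscr B_{r-1,s}(f-1)$ into the part in $\H_{r-f}\otimes\H_{s-f+1}$ (handled by Lemma~\ref{eqi0}) and the part in the ideal $\langle e_f\rangle\subset\mathscr B_{r-1,s}(f-1)$ (handled by the present Lemma~\ref{farc}). Lemma~\ref{eqi0} treats terms of the form $\n_{\mu^{(2)}}g^*_{d(\s)}(g^*_{f,s})^{-1}g_{r,f}\n_{\t\t^{\lambda^{(1)}}}e^f$ where the first component is fixed to be $\lambda^{(1)}$, $\mu^{(2)}$ dominates the second component of $\beta^{(b)}$, and $\s$ encodes a semistandard tableau. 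But $b\in\mathscr B_{r-1,s}^f$ has cell-type $(f,\mu)$ for bipartitions $\mu$ of $(r-1-f,s-f)$ — neither the shape of the bipartition nor the position of $e^f$ matches the hypotheses of Lemma~\ref{eqi0}. Expanding $b$ via Theorem~\ref{celb} and then multiplying by $\n_{\beta^{(b)}}(g^*_{f,s})^{-1}g_{r,f}e^f$ does not produce terms of that Murphy-type form, so the "kill most contributions" step has no foundation.

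What is actually needed — and what your proposal omits — is a second use of Corollary~\ref{tool2} and the cellular structure of $\mathscr B_{r-1,s}(f-1)$ to write $b$ as a combination of $\sigma(g_d)\sigma(g_{f,i}g^*_{f,j})\,h\,e^f g_{f,i_1}g^*_{f,j_1}$ with $h\in\mathscr B_{r-1,s}(f)$, then the commutation $\n_{\beta^{(b)}}g_{r,f}(g^*_{f,s})^{-1}=g_{r,f}(g^*_{f,s})^{-1}\n_\lambda$, and then the one crucial computation: $e^f g_{f,i_1}g^*_{f,j_1}(g^*_{f,s})^{-1}g_{r,f}e_f\n_\lambda$. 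This splits into two cases. If $j_1<s$, the product contains $e^{f+1}$ and thus lies in $\mathscr B_{r,s}^{f+1}\subseteq\mathscr B_{r,s}^{\rhd(f,\lambda)}$, so the contribution is already zero. If $j_1=s$, the expression collapses to $\rho\,g_{r,f+1}g_{f+1,i_1+1}e^f\n_\lambda$, which sits inside the Hecke subalgebra acting on $C(\lambda^{(1)})$; membership in $N_a$ then follows directly from the classical branching rule for Specht modules. Your proposed Garnir/semistandard-tableau bookkeeping is not needed once this case split is made, and without it there is no way to see where the factor $e^{f+1}$ (and hence the vanishing) comes from.
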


 \begin{proof} By Corollary~\ref{tool2}, any $b\in \mathscr{B}_{r-1,s}  e^{f-1} $ can be written as a linear combination of elements
$\sigma (g_d) e^{f-1} {\mathscr B}_{r-1, s}(f-1)$ with $d\in
\mathscr D_{r-1, s}^{f-1}$.  Then we use the cellular basis of
${\mathscr B}_{r-1, s}(f-1)$ to write any $b\in \mathscr{B}_{r-1,s}
e^{f-1} \cap\mathscr {B}_{r-1,s}^f$ as a linear combination of
elements
 \begin{equation} \label{key-1}\sigma (g_d) e^{f-1}  \sigma (g_{f, i} g^*_{f, j} ) h e_f g_{f, i_1} g^*_{f, j_1}=\sigma (g_d)\sigma (g_{f, i} g^*_{f, j} ) he^f g_{f, i_1} g^*_{f, j_1}
\end{equation}
with $f\le i, i_1\le r-1$, $f\le  j, j_1\le s$ and $h\in \mathscr B_{r-1, s}(f)$.
 We denote  $b$ by one of  elements in \eqref{key-1}. Note that
 \begin{equation}\label{key-2}  \n_{\beta^{(b)}}
 g_{r,f} (g^*_{f,
 s})^{-1}= g_{r,f} (g^*_{f,
 s})^{-1}  \n_{\lambda}.\end{equation}
where $  \n_{\lambda}\in \mathscr B_{r, s}(f)$ and $\n_{\beta^{(b)}}\in \mathscr B_{r-1, s}(f-1)$ (see \eqref{lo}). In order to prove \eqref{keyy}, by \eqref{key-1},
it suffices to prove
\begin{equation}\label{keyyy} e^f g_{f, i_1} g^*_{f, j_1} (g^*_{f, s})^{-1} g_{r,f}  e_f \n_{\lambda}   +\mathscr B_{r, s}^{\rhd{f, \lambda}}  \in N_a.\end{equation}
 We have
 $$
 e^f g_{f, i_1} g^*_{f, j_1} (g^*_{f, s})^{-1}
g_{r, f} e_f\n_\lambda  =\begin{cases} (g_{f+1, s}^*)^{-1} g_{r, f+1} e^{f+1} g^*_{f+1,
j_1+1}\n_\lambda , & \text{ if $j_1<s$},\\
\rho g_{r,
f+1} g_{f+1, i_1+1} e^f \n_\lambda,  & \text{ if $j_1=s$}.\\
\end{cases}
 $$
  Obviously, if $j_1<s$, then  $e^f g_{f, i_1} g^*_{f, j_1} (g^*_{f, s})^{-1} g_{r,f}  e_f \n_{\lambda}   +\mathscr B_{r, s}^{\rhd{f, \lambda}} =0\in N_a$.

 If $j_1=s$, we have $ g_{r, f+1} g_{f+1, i_1+1} n_{\lambda^{(1)}}\in C(\lambda^{(1)})$, where   $C(\lambda^{(1)})$
 is the cell module of $\mathscr H_{r-f}(f)$ with respect to $\lambda^{(1)}$ in section~3. So, \eqref{keyyy} follows from
   branching rule for cell module $C(\lambda^{(1)})$  of Hecke algebra $\mathscr H_{r-f}$ in \cite{Ma}. \end{proof}

Suppose  $\s$ is a standard tableau with entries in $f+1, f+2,
\cdots, r$.  In the following, let  $\tilde\s$ be the standard
tableau  obtained from $\s$ by using $i-1$ instead of $i$ in $\s$,
for all $i$,  $f+1\le i\le r$.

\begin{Lemma} \label{ab} For each $k, 1\le k\le b$, let  $M_k=N_a+\sum_{j=1}^k \mathscr B_{r-1, s} z_{\beta^{(j)}}$, where
$z_{\beta^{(j)}}$ is defined in Lemma~\ref{zbeta}. Then
 $M_b/N_a$ is spanned by $\{   \sigma (g_v) \sigma(g^*_{d(t)})\sigma(g_{d(\tilde
\s)}) z_{\beta^{(k)}} +N_a| (\tilde s, \t) \in \Std(\beta^{(k)}),
1\le k\leq b, v\in \mathscr{D}_{r-1,s}^{f-1} \}$.
\end{Lemma}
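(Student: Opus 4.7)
By Lemma~\ref{zbeta} we have $M_b = N_a + \mathscr B_{r-1,s}\,z_{\beta^{(b)}}$, so it suffices to show that every element $x\,z_{\beta^{(b)}}$ with $x\in\mathscr B_{r-1,s}$ is, modulo $N_a+\mathscr B_{r,s}^{\rhd(f,\lambda)}$, in the span of the proposed elements. I first rewrite
$$z_{\beta^{(b)}}\equiv e^{f-1}\,g_{r,f}(g^*_{f,s})^{-1}\,e_f\,\n_\lambda\pmod{\mathscr B_{r,s}^{\rhd(f,\lambda)}},$$
using that $\n_\lambda$ commutes with $e^f=e^{f-1}e_f$ (since $\n_\lambda$ is supported on generators with index $\ge f+1$) and that $g_{r,f}$ and $(g^*_{f,s})^{-1}$ commute with $e_1,\ldots,e_{f-1}$ (since they are supported on generators with index $\ge f$); both facts follow from Lemma~\ref{tool}(a).

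Applying Corollary~\ref{tool2} to the subalgebra $\mathscr B_{r-1,s}$, every element of $\mathscr B_{r-1,s}\,e^{f-1}$ is a linear combination of $\sigma(g_v)\,e^{f-1}\,h$ with $v\in\mathscr D^{f-1}_{r-1,s}$ and $h\in\mathscr B_{r-1,s}(f-1)$. By Lemma~\ref{quo1} I decompose $h=h_0+h_1$, where $h_1$ lies in the two-sided ideal of $\mathscr B_{r-1,s}(f-1)$ generated by $e_f$ and $h_0$ is a lift of an element of $\H_{r-f}\otimes\H_{s-f+1}$. The $h_1$-contribution places $\sigma(g_v)\,e^{f-1}\,h_1$ inside $\mathscr B_{r-1,s}\,e^{f-1}\cap\mathscr B_{r-1,s}^f$; combined with~\eqref{zb}, Lemma~\ref{farc} absorbs it into $N_a$.

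For the $h_0$-part, I expand $h_0$ in the Murphy cellular basis of $\H_{r-f}\otimes\H_{s-f+1}$ (Theorem~\ref{cellh}), indexed by bipartitions $\mu=(\mu^{(1)},\mu^{(2)})$ and pairs $(\s_1,\s_2),(\t_1,\t_2)\in\Std(\mu)$. A cellular/dominance argument within $C(f,\lambda)$ shows that only $\mu\unrhd\beta^{(b)}$ contributes modulo $\mathscr B_{r,s}^{\rhd(f,\lambda)}$, and the subcase $\mu^{(1)}\rhd\lambda^{(1)}$ forces the contribution into $\mathscr B_{r,s}^{\rhd(f,\lambda)}$ via a standard Hecke-algebra dominance argument on $\n_{\mu^{(1)}}\n_{\lambda^{(1)}}$. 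The remaining subcase has $\mu^{(1)}=\lambda^{(1)}$: if $\mu^{(2)}\ne\lambda^{(2)}\cup\{q_k\}$ for every $k$, then Lemma~\ref{eqi0} kills the term directly; if $\mu=\beta^{(k)}$, commuting $g_{d(\t_1)}\,g^*_{d(\t_2)}$ past $g_{r,f}(g^*_{f,s})^{-1}\,e_f\,\n_\lambda$ (straightforward since only the ``boundary'' factor $g^*_f$ in $g^*_{d(\t_2)}$ interacts non-trivially with $(g^*_{f,s})^{-1}$) and using the signed sum built into $\n_{\lambda^{(2)}\cup\{q_k\}}$ rewrites the Murphy element into a scalar multiple of $\sigma(g^*_{d(\s_2)})\,\sigma(g_{d(\s_1)})\,z_{\beta^{(k)}}$. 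Restoring the outer $\sigma(g_v)$ and identifying $\tilde\s=\s_1$, $\t=\s_2$ gives the desired spanning element.

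The main obstacle is the last identification: the collapse of a Murphy basis element with $\mu=\beta^{(k)}$ to a scalar multiple of $z_{\beta^{(k)}}$ rests on matching the signed sum $\sum_{j=d_k}^{c_k}(-q)^{j-c_k}g^*_{j,c_k}$ in the definition of $z_{\beta^{(k)}}$ with the analogous signed sum implicit in $\n_{\lambda^{(2)}\cup\{q_k\}}$ after it commutes past $(g^*_{f,s})^{-1}$. This is the combinatorial bookkeeping analogous to Enyang's treatment of Birman--Murakami--Wenzl algebras in~\cite{Enyang1}, and carrying it out carefully (for both the ``in-row'' and ``not-in-row'' positions of $q_k$) will occupy the bulk of the proof.
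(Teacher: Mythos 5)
Your overall skeleton matches the paper's argument exactly: reduce $M_b/N_a$ to $\mathscr B_{r-1,s}\,z_{\beta^{(b)}}$, rewrite $z_{\beta^{(b)}}$ via \eqref{zb} and $\n_{\beta^{(b)}}$, apply Corollary~\ref{tool2} to $\mathscr B_{r-1,s}e^{f-1}$, split $h\in\mathscr B_{r-1,s}(f-1)$ into the $\langle e_f\rangle$-part (absorbed into $N_a$ by Lemma~\ref{farc}) and the $\H_{r-f}\otimes\H_{s-f+1}$-part, and handle the latter by dominance plus Lemma~\ref{eqi0}. This is precisely what the paper does.

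There is one subtle but genuine deviation worth flagging. You propose to expand $h_0$ itself in the Murphy cellular basis of $\H_{r-f}\otimes\H_{s-f+1}$ and then argue by dominance after right-multiplying by $\n_{\beta^{(b)}}g_{r,f}(g^*_{f,s})^{-1}e_f$. The paper instead expands $h_0\,\n_{\beta^{(b)}}$ directly via the semi-standard (Murphy) decomposition of the permutation-type ideal $\H\,\n_{\beta^{(b)}}$, yielding terms $\n_{\tilde\s\tilde\v}\,\n_{\t\u}$ with $\u\in(\gamma^{(b)})^{-1}(\ssS)$, $\ssS\in\bfT^{ss}(\mu^{(2)},\gamma^{(b)})$. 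That semi-standard constraint on $\u$ is exactly the hypothesis of Lemma~\ref{eqi0}, so the lemma applies verbatim. Expanding $h_0$ alone, the products $\n_{\s\t}\,\n_{\beta^{(b)}}$ land in $\H^{\unrhd\beta^{(b)}}$, but the resulting $\t$-indices are a priori arbitrary standard tableaux and you do not automatically recover the semi-standard constraint that Lemma~\ref{eqi0} requires; one has to re-express them in the semi-standard form before invoking the lemma. Once you make that adjustment, the route coincides with the paper's, and the final collapse to $\sigma(g_{d(\tilde\s)})\sigma(g^*_{d(\t)})z_{\beta^{(k)}}$ is the short explicit chain of congruences written out at the end of the paper's proof.
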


\begin{proof} For any $k, 1\le k\le b$, we have
$$\begin{aligned} z_{\beta^{(b)}} & =  g_{r,f} (g^*_{f,
 s})^{-1}  \n_{\lambda} e^{f} +\mathscr B_{r, s}^{\rhd(f, \lambda)}=\n_{\beta^{(b)}}
 g_{r,f} (g^*_{f,
 s})^{-1}e^f +\mathscr B_{r, s}^{\rhd(f, \lambda)},\\
 z_{\beta^{(k)}} & =\sum_{j=d_k}^{c_k} (-q)^{j-c_k} g_{j, c_k}^*
 g_{c_k, s}^*z_{\beta^{(b)}}, \\
 \end{aligned}
 $$
where $c_k$ and $d_k$ are defined in Lemma~\ref{zbeta}. So, $M_b$ is
generated by $z_{\beta^{(b)}}$ and $N_a$. Note that
$$ \mathscr B_{r-1, s}  z_{\beta^{(b)}} = \mathscr B_{r-1, s}  e^{f-1} \n_{\beta^{(b)}}
 g_{r,f} (g^*_{f,
 s})^{-1}e_f +\mathscr B_{r, s}^{\rhd(f, \lambda)}.$$
Applying
Corollary~\ref{tool2} to $\mathscr B_{r-1, s}e^{f-1}$, we have that
$ \mathscr B_{r-1, s}  z_{\beta^{(b)}} $ is spanned by $$
\sigma(g_d) e^{f-1} {\mathscr B}_{r-1, s}(f-1) \n_{\beta^{(b)}}
 g_{r,f} (g^*_{f,
 s})^{-1}e_f +\mathscr B_{r, s}^{\rhd(f, \lambda)},$$
 where $d$ ranges over all elements in $\mathscr D^{f-1}_{r-1, s}$.
For $x\in   \mathscr {B}_{r-1,s}(f-1)$, we can assume that  $x$ is in either    $ \H_{r-f}\otimes \H_{s-f+1} $
or $ \mathscr {B}_{r-1,s}^1(f-1)$, where $\mathscr {B}_{r-1,s}^1(f-1)$ is the two-sided ideal of
$\mathscr {B}_{r-1,s}(f-1)$ generated by $e_f$.

If $x\in  \mathscr {B}_{r-1,s}^1(f-1)$, then $\sigma(g_d) e^{f-1} x\in \mathscr B_{r-1, s } e^{f-1}\cap \mathscr B_{r-1, s}^f$.
By Lemma~\ref{farc},  $$
\sigma(g_d) e^{f-1} x \n_{\beta^{(b)}}
 g_{r,f} (g^*_{f,
 s})^{-1}e_f +\mathscr B_{r, s}^{\rhd(f, \lambda)}\in N_a.$$

Write $\beta^{(i)}=(\lambda^{(1)}, \gamma^{(i)})$. If $x\in \H_{r-f}\otimes\H_{ s-f+1}$, then   $ e^{f-1}
x \n_{\beta^{(b)}}
 g_{r,f} (g^*_{f,
 s})^{-1}e_f +\mathscr B_{r, s}^{\rhd (f, \lambda)}$ can be written
 as a linear combination of elements $e^{f-1} \n_{\tilde \s\tilde \v}  \n_{\t
 \u} g_{r, f} (g^*_{f, s})^{-1}e_f+\mathscr B_{r, s}^{\rhd (f, \lambda)}$ where $\tilde \s\in \Std(\lambda^{(1)})$, $\tilde \v=\tilde \t^{\lambda^{(1)}}$,  $\t\in \Std(\gamma^{(b)})$,  $\u\in (\gamma^{(b)})^{-1} (\ssS)$ with
  $\ssS\in \bfT^{ss}(\mu,\gamma^{(b)} ) $.
By Lemma~\ref{eqi0}, we can assume $ \mu\in \{\gamma^{(i)}\mid 1\le
i\le b\}$ in \eqref{lo}.  Further, if  $\mu\in \{\gamma^{(i)}\mid 1\le i\le b\}$, then
there is a unique $\ssS\in \bfT^{ss}(\gamma^{(i)}, \gamma^{(b)})$ such that
the type of $T$ is $\lambda^{(2)}$ where $T$ is obtained from $S$ by removing the node containing the  unique largest entry.
 Further, there is a unique $\u\in (\gamma^{(b)})^{-1} (S)$
such that $d(\u)=s^*_{c_i, s}$, where $c_i$ is defined in Lemma~\ref{zbeta}.
So, $$\begin{aligned} e^{f-1} & \n_{\tilde \s\tilde \v}  \n_{\t
 \u} g_{r, f} (g^*_{f, s})^{-1}e_f \equiv \n_{\tilde \s\tilde \v}  \n_{\t \t^{\gamma^{(i)}}} g_{c_i, s}^*
  g_{r, f} (g^*_{f, s})^{-1}e^f
\\ & \equiv \sigma(g_{d(\tilde\s)})\sigma(g^*_{d(\t)})  \n_{\lambda^{(1)}}\n_{\gamma^{(i)}} (g_{f, c_i}^*)^{-1} g_{r, f} e^f\\
& \equiv \sigma(g_{d(\tilde\s)})\sigma(g^*_{d(\t)}) \sum_{j=d_i}^{c_i} (-q)^{j-c_i} g^*_{j, c_i} (g_{f, c_i}^*)^{-1} g_{r, f} \n_{\lambda} e^f + \mathscr B_{r, s}^{\rhd(f, \lambda)} \\
& = \sigma(g_{d(\tilde\s)})\sigma(g^*_{d(\t)})z_{\beta^{(i)}}
\end{aligned} $$

 Therefore,  $M_b/N_a$ is
spanned by the elements, as required.
\end{proof}

The following result can be proved similarly.
\begin{Lemma} Let $M_0=N_a$.  Then $M_k/M_{k-1}$
is  spanned by $\{\sigma (g_v) \sigma(g^*_{d(t)})\sigma(g_{d(\tilde
\s)}) z_{\beta^{(k)}} +M_{k-1}| (\tilde \s, \t) \in
\Std(\beta^{(k)}),  v\in \mathscr{D}_{r-1,s}^{f-1} \}$, for all $k, 1\le k\le b$.
\end{Lemma}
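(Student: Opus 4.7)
The plan is to adapt the argument of Lemma~\ref{ab}, substituting $z_{\beta^{(k)}}$ for $z_{\beta^{(b)}}$ and $M_{k-1}$ for $N_a$ throughout. Since $M_k=M_{k-1}+\mathscr B_{r-1,s}\,z_{\beta^{(k)}}$ by definition, it suffices to analyse $\mathscr B_{r-1,s}\,z_{\beta^{(k)}}$ modulo $M_{k-1}$.

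First I would rewrite $z_{\beta^{(k)}}$ in the closed form $\n_{\beta^{(k)}}\,g_{r,f}\,(g^*_{f,c_k})^{-1}\,e^f \pmod{\mathscr B_{r,s}^{\rhd(f,\lambda)}}$, the direct analogue of (\ref{zb}), obtained using Lemma~\ref{tool}(a),(d). Applying Corollary~\ref{tool2} to $\mathscr B_{r-1,s}\,e^{f-1}$, an arbitrary element of $\mathscr B_{r-1,s}\,z_{\beta^{(k)}}$ is a linear combination of $\sigma(g_d)\,e^{f-1}\,x\,\n_{\beta^{(k)}}\,g_{r,f}\,(g^*_{f,c_k})^{-1}\,e_f$ with $d\in\mathscr D^{f-1}_{r-1,s}$ and $x$ in a cellular basis of $\mathscr B_{r-1,s}(f-1)$. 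By Lemma~\ref{quo1}, I separate $x$ into a piece in the two-sided ideal $\langle e_f\rangle$ and a piece in a complement identified with $\H_{r-f}\otimes\H_{s-f+1}$. For the ideal piece, the natural variant of Lemma~\ref{farc} with $\beta^{(k)}$ in place of $\beta^{(b)}$ places the result into $N_a\subseteq M_{k-1}$. For the Hecke piece, I expand using the Murphy bases of $\H_{r-f}$ and $\H_{s-f+1}$; the analogue of Lemma~\ref{eqi0} eliminates contributions from shapes $\mu\rhd\beta^{(k)}$ with $\mu\notin\{\beta^{(i)}\mid i\le k\}$, and the remaining contributions are $\mathscr B_{r-1,s}$-multiples of $z_{\beta^{(i)}}$ for $i\le k$, arranged via the ordering (\ref{lo}). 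Since $\mathscr B_{r-1,s}\,z_{\beta^{(i)}}\subseteq M_{k-1}$ for $i<k$, only the $i=k$ contributions survive in the quotient, producing exactly the claimed spanning set.

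The main obstacle is confirming that the combinatorial step (the $\beta^{(k)}$-analogue of Lemma~\ref{eqi0}) goes through: one must verify that the argument using Lemma~\ref{big}, which restricts the semistandard types appearing after reduction modulo $\mathscr B_{r,s}^{\rhd(f,\lambda)}$, is insensitive to whether the filled shape is $\gamma^{(b)}$ or $\gamma^{(k)}$. This should hold because the proof of Lemma~\ref{eqi0} depends only on the presence of a unique maximal entry in the type and on the dominance ordering of the removable nodes, both of which carry over when $b$ is replaced by $k$; the ordering (\ref{lo}) then sorts the resulting $z_{\beta^{(i)}}$ into the filtration $M_0\subseteq M_1\subseteq\cdots\subseteq M_b$ in the correct way.
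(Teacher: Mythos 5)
Your proposal follows the paper's intended route (the paper proves Lemma 4.9, on $M_b/N_a$, in detail and then asserts the present filtration refinement can "be proved similarly"), and the decomposition into an $\langle e_f\rangle$-piece and a Hecke-piece, with the Murphy expansion in $\H_{r-f}\otimes\H_{s-f+1}$ and the use of the dominance order, is exactly the right adaptation. Two points deserve to be made explicit. First, the dominance claim you invoke is the one that makes the filtration work: since $\ssS$ has type $\gamma^{(k)}$, the Murphy expansion only produces shapes $\mu^{(2)}\unrhd\gamma^{(k)}$, and the ordering in \eqref{lo} gives $\gamma^{(1)}\rhd\cdots\rhd\gamma^{(b)}$, so among the $\gamma$-shapes only $\gamma^{(i)}$ with $i\le k$ can occur; this is why everything lands in $M_k$ and why the leftover mod $M_{k-1}$ involves only $z_{\beta^{(k)}}$. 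Second, your worry about the $\beta^{(k)}$-analogue of Lemma~\ref{eqi0} is well-founded as a point of care: Lemma~\ref{big} is stated under the hypothesis $\mu_{l(\mu)}=1$, and $\gamma^{(k)}$ does \emph{not} satisfy this when $k<b$. However, the argument still goes through, because the step actually used is that deleting the box containing the maximal entry of $\s$ yields a semistandard tableau of type $\gamma^{(k)}$ with its last part decreased by one, i.e.\ of type $\lambda^{(2)}$ only after the terms in $\mathscr B_{r,s}^{\rhd(f,\lambda)}$ are discarded; the dominance $\s_{s-1}\unrhd\nu$ and the ``iff'' criterion for equality are verified by exactly the same restriction argument, without using $\mu_{l(\mu)}=1$. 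If you spell out that generalization of Lemma~\ref{big}, the rest of your sketch is correct and matches the paper.
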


\begin{Theorem}\label{branching} Suppose $(f, \lambda)\in \Lambda_{r, s}$ with $\lambda=(\lambda^{(1)},
\lambda^{(2)})$. Let $N_i$, $1\le i\le a$ be defined in Lemma~\ref{nk}.
For $1\le j\le b$, Let $N_{a+j}=M_j$, where $M_j$ is defined in Lemma~\ref{ab}. Then
$$0\subset N_1\subset N_2\subset\cdots \subset N_a\subset N_{a+1}\subset \cdots \subset N_{a+b}=C(f, \lambda)$$ is a
filtration of $\mathscr B_{r-1, s}$-modules such that
$$N_{i}/N_{i-1}\cong \begin{cases} C(f, \alpha^{(i)}), & \text{if $1\le i\le a$,}\\
                                     C(f-1, \beta^{(i-a)}),  & \text{if $a+1\le i\le a+b$.}\end{cases} $$\end{Theorem}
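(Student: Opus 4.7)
The plan is to exploit the generating elements $y_{\alpha^{(k)}}$ from Lemma~\ref{nk} and $z_{\beta^{(j)}}$ from Lemma~\ref{zbeta} to build the filtration, produce a surjection from each cell module $C(f,\alpha^{(i)})$ or $C(f-1,\beta^{(j)})$ onto the corresponding successive subquotient, and close the argument by a dimension count that forces these surjections to be isomorphisms and $N_{a+b}=C(f,\lambda)$.

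First, by construction each $N_k$ is a $\mathscr B_{r-1,s}$-submodule of $C(f,\lambda)$, so the chain of inclusions is automatic. For $1\le k\le a$, the epimorphism $C(f,\alpha^{(k)})\twoheadrightarrow N_k/N_{k-1}$ is furnished directly by Lemma~\ref{nk}. For $a+1\le k\le a+b$, the lemma immediately preceding the statement produces a spanning set of $M_{k-a}/M_{k-a-1}$ parametrised by the indexing set $\Std(\beta^{(k-a)})\times \mathscr D_{r-1,s}^{f-1}$, which is exactly the one labelling the cellular basis of $C(f-1,\beta^{(k-a)})$. This yields a surjective $R$-linear map $C(f-1,\beta^{(k-a)})\twoheadrightarrow M_{k-a}/M_{k-a-1}$, and its $\mathscr B_{r-1,s}$-equivariance is verified by pushing the action of $\mathscr B_{r-1,s}(f-1)$ through the quotient $\psi_{f-1}\colon\mathscr B_{r-1,s}(f-1)\twoheadrightarrow\mathscr H_{r-f}\otimes\mathscr H_{s-f+1}$ coming from Lemma~\ref{quo1} and invoking the Murphy cellular description of cell modules for the Hecke factor, exactly as in the proof of Lemma~\ref{nk}.

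Second, I would match dimensions. Proposition~\ref{classcell} together with Lemma~\ref{rcs} gives
\[
\dim C(f,\lambda)=\binom{r}{f}\binom{s}{f}f!\, f_{\lambda^{(1)}} f_{\lambda^{(2)}},\qquad f_\mu:=|\Std(\mu)|.
\]
Combining the restriction branching identity $\sum_{i=1}^{a}f_{\lambda^{(1)}\setminus p_i}=f_{\lambda^{(1)}}$ with the Pieri-type induction identity $\sum_{j=1}^{b}f_{\lambda^{(2)}\cup q_j}=(s-f+1)\,f_{\lambda^{(2)}}$, together with the elementary identities $\binom{r}{f}=\binom{r-1}{f}+\binom{r-1}{f-1}$ and $\binom{s}{f-1}(s-f+1)=\binom{s}{f}f$, collapses $\sum_{i=1}^{a}\dim C(f,\alpha^{(i)})+\sum_{j=1}^{b}\dim C(f-1,\beta^{(j)})$ to $\dim C(f,\lambda)$.

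The dimension equality then forces each surjection above to be an isomorphism and forces the filtration to exhaust $C(f,\lambda)$, delivering both $N_{a+b}=C(f,\lambda)$ and the asserted identification of successive quotients. The main obstacle is the second family of surjections: unlike the $N_k/N_{k-1}$ case, which is cleanly packaged by Lemma~\ref{nk}, the $M_k/M_{k-1}$ step requires tracking the $\mathscr B_{r-1,s}$-action through the braid-type commutations built into the definition of $z_{\beta^{(k)}}$ and verifying that no stray terms escape the ideal $\mathscr B_{r,s}^{\rhd(f,\lambda)}$—this is precisely the content already absorbed into Lemmas~\ref{farc}--\ref{ab}. Once this is in hand, the remaining dimension bookkeeping is routine.
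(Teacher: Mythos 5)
There is a genuine logical gap in your argument. Your strategy is to (i) produce surjections from cell modules onto the successive subquotients, (ii) verify the total-dimension identity $\sum_i \dim C(f,\alpha^{(i)})+\sum_j \dim C(f-1,\beta^{(j)})=\dim C(f,\lambda)$, and then (iii) conclude that the surjections are isomorphisms \emph{and} that $N_{a+b}=C(f,\lambda)$. But step (iii) does not follow. From (i) you get $\dim N_k/N_{k-1}\le\dim C(f,\alpha^{(k)})$ (and similarly for the $\beta$'s), and hence $\dim N_{a+b}=\sum_k\dim N_k/N_{k-1}\le\dim C(f,\lambda)$; combined with the a priori inclusion $N_{a+b}\subseteq C(f,\lambda)$ this yields nothing new — it is a one-sided inequality. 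The dimension identity bounds $N_{a+b}$ from above, not below, so it cannot force the filtration to exhaust $C(f,\lambda)$. You are implicitly using the conclusion ($N_{a+b}=C(f,\lambda)$) to turn the inequality into an equality and then deducing that same conclusion.

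The missing step, and the crux of the paper's proof, is to show \emph{directly} that $M_b=C(f,\lambda)$ before invoking the dimension count. The paper does this by taking an arbitrary spanning element $\sigma(g_d)\sigma(g_{d(\s)})\sigma(g^*_{d(\t)})\n_\lambda e^f+\mathscr B_{r,s}^{\rhd(f,\lambda)}$ of $C(f,\lambda)$ with $d=s_{f,i_f}s^*_{f,j_f}\cdots s_{1,i_1}s^*_{1,j_1}\in\mathscr D_{r,s}^f$, and splitting into two cases: if $i_f=r$ the element lands in $\mathscr B_{r-1,s}z_{\beta^{(b)}}\subseteq M_b$ (using the identity \eqref{zb} for $z_{\beta^{(b)}}$), while if $i_f<r$ it already lies in $N_a\subseteq M_b$. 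Once $M_b=C(f,\lambda)$ is established, the inequality chain $\dim C(f,\lambda)=\sum_k\dim N_k/N_{k-1}\le\sum_i\dim C(f,\alpha^{(i)})+\sum_j\dim C(f-1,\beta^{(j)})=\dim C(f,\lambda)$ forces equality everywhere and so each epimorphism is an isomorphism. Without the equality $M_b=C(f,\lambda)$ at the outset, your dimension bookkeeping, however cleanly carried out, does not close the argument.
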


\begin{proof} We prove our result under the assumption $f>0$. Otherwise, since
$C(0, \lambda)\cong  S_\alpha\otimes  S_\beta$ where $\alpha $ and
$\beta$ are conjugates of  $\lambda^{(1)}$ and $\lambda^{(2)}$,
respectively, the result follows from the corresponding result for
Hecke algebra. See, e.g \cite{Ma}.

We have constructed a filtration of $M_b$ such that there is an
epimorphism from $C(f, \alpha^{(k)})$  (resp. $ C(f-1, \beta^{(k)}) $)
to $N_k/N_{k-1}$ (resp. $M_k/M_{k-1}$). We claim $M_b=C(f,
\lambda)$.

In fact, by definition of $M_b$, we have $ M_b\subseteq C(f,
\lambda)$. Note that any element in $C(f, \lambda)$ can be
expressed as a linear combination of elements $\sigma(g_d)\sigma(g_{d(\s)}) \sigma(g^*_{d(\t)}) \n_{\lambda} e^f+\mathscr B_{r, s}^{\rhd(f, \lambda)}$,   where $d\in
\mathscr D_{r,s}^f$ and $(\s, \t)\in \Std(\lambda)$. Further,
$d=s_{f, i_f} s^*_{f, j_f}\cdots s_{1, i_1} s^*_{1, j_1}$ with $i_k,
j_k\ge k$ and $i_f>i_{f-1}>\cdots>i_1$. If $i_f=r$,  then $$\sigma(g_d) \sigma(g_{d(\s)}) \sigma(g^*_{d(\t)})
\n_\lambda e^f\equiv
\sigma(g_{d_1}) \sigma(g_{d(\tilde \s)}) \sigma(g^*_{d(\t)}) g_{r, f} \n_\lambda e^f+\mathscr B_{r, s}^{\rhd(f, \lambda)}
\in \mathscr B_{r-1, s} z_{\beta^{(b)}} \subset M_b,$$
where $d_1=s^*_{f, j_f}s_{f-1, i_{f-1}}s^*_{f-1, j_{f-1}}\cdots s_{1, i_{1}}s^*_{1, j_{1}}$.
We remark that the inclusion follows from \eqref{zb}. If $i_f<r$, then   $$\sigma(g_d)\sigma(g_{d(\s)})\sigma(g^*_{d(\t)})
\n_\lambda e^f +\mathscr B_{r, s}^{\rhd(f, \lambda)}\in N_a\subset
M_b.$$ So,  $ M_b\supseteq C(f, \lambda)$ and hence $M_b=C(f,
\lambda)$.
So, $$
\dim_\kappa C(f, \lambda)=\sum_{i=1}^{a+b} N_i/N_{i-1}
\le \sum_{i=1}^a \dim_\kappa C(f, \alpha^{(i)})+
\sum_{j=1}^b \dim_\kappa C(f-1, \beta^{(j)})=\dim_\kappa C(f, \lambda),$$
where the last equality follows from  branching rule for cell modules for walled Brauer algebras in
\cite[Theorem~3.3]{CVDM}. So, $\dim_\kappa C(f, \alpha^{(i)})=\dim_\kappa N_i/N_{i-1}$ and $ \dim_\kappa C(f-1,
\beta^{(j)})=\dim_\kappa M_j/M_{j-1}$, forcing  $C(f, \alpha^{(i)})\cong N_i/N_{i-1}$ and $ C(f-1,
\beta^{(j)})\cong M_j/M_{j-1}$, for all possible $i$ and $j$. \end{proof}

\section{The irreducible $\mathscr{B}_{r,s}$-modules}\label{classification}
 In this section, we   classify the irreducible $\mathscr B_{r, s}$-modules
over an arbitrary field $\kappa$. First, we   briefly recall the representation theory
of cellular algebras~\cite{GL}.  At moment, we keep the notations in
Definition~\ref{GL}. So, $A$ is a cellular algebra over a
commutative ring $R$ containing $1$ with a cellular basis $\set{C_{\s\t}|\s,\t\in
T(\lambda), \lambda\in\Lambda}$. Unlike what we have done in section~4, we consider
the right $A$-module in this section. As we mentioned before, each left $A$-module can be considered as
as a right $A$-module. The motivation for using right $\mathscr B_{r, s}$-module is that bases of right cell modules of
 $\mathscr B_{r, s}$ can be used to classify singular vectors in the mixed tensor product of natural module and its dual over
 $U_q(\mathfrak{gl}_n)$. Details will be given in \cite{Rsong}.

Recall that each cell module $C(\lambda)$ of $A$ is the free $R$-module with basis
$\{C_\bfs\mid \bfs\in T(\lambda)\}$. In \cite{GL}, Graham and Lehrer have proved that every irreducible
$A$--module arises in a unique way as the simple
head of some cell module.  More explicitly, each
$C(\lambda)$ comes equipped with the invariant form
$\phi_{\lambda}$ which is determined by the equation
$$C_{\bfs\bft}
           C_{\bft'\bfs}
 \equiv\phi_{\lambda}\big(C_{\bft},
               C_{\bft'}\big)\cdot
        C_{\bfs\bfs}\pmod{ A^{\rhd \lambda}}.$$
 Consequently,
$$\Rad C(\lambda)
   =\set{x\in C(\lambda)|\phi_{\lambda}(x,y)=0\text{ for all }
                          y\in C(\lambda)}$$
is an $A$--submodule of $C(\lambda)$ and $D^\lambda=C(\lambda)/\Rad
C(\lambda)$ is either zero or absolutely irreducible. Graham and
Lehrer~\cite{GL} have proved the following result in \cite{GL}.

\begin{Theorem} \cite{GL}\label{cell-tool} Let $(A, \Lambda)$ be a cellular algebra over a field $\kappa$.
\begin{enumerate} \item  The set  $\{D^\lambda\mid D^\lambda\neq 0\}$  consists of a
complete set of pairwise non-isomorphic irreducible $A$-modules.
\item Let $G_\lambda$ be the Gram matrix with respect to the
invariant form $\phi_\lambda$ on $C(\lambda)$. Then $A$ is split
semisimple if and only if $\prod_{\lambda\in \Lambda}\det
G_\lambda\neq 0$ in $\kappa$. \end{enumerate}\end{Theorem}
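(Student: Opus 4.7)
The plan is to follow the standard Graham--Lehrer argument, since the assertion is a purely abstract statement about cellular algebras and does not use any specific structure of $\mathscr{B}_{r,s}$. I would split the proof cleanly into (a) and (b), treating (a) first since (b) will rest on the same bilinear-form analysis.

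For (a), the first step is to verify that $\Rad C(\lambda)$ is indeed an $A$-submodule: this follows from the defining identity of $\phi_\lambda$ together with the associativity $C_{\s\t}(C_{\t'\s}a) = (C_{\s\t}C_{\t'\s})a$ modulo $A^{\rhd\lambda}$, which forces $\phi_\lambda$ to be contravariant with respect to the anti-involution $*$. Once $\Rad C(\lambda)$ is a submodule, I would show that whenever $D^\lambda \neq 0$ it is absolutely irreducible: if $0 \neq \bar x \in D^\lambda$, pick $\bar y$ with $\phi_\lambda(\bar x,\bar y) \neq 0$ and use the cell relations to conclude that $\bar x \cdot A$ contains $\bar y$, hence equals $D^\lambda$; the absoluteness of irreducibility comes from the fact that $\phi_\lambda$ takes values in $\kappa$ and the argument is base-change stable. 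The next step is to separate the $D^\lambda$ for distinct $\lambda$: I would extract $\lambda$ intrinsically from $D^\lambda$ as the maximal element of $\Lambda$ such that $D^\lambda \cdot A^{\unrhd\lambda} \neq 0$, using the filtration of $A$ by the ideals $A^{\unrhd\mu}$. Finally, to show every simple $A$-module $L$ occurs, choose the largest $\lambda$ with $L \cdot A^{\unrhd\lambda} \neq 0$ and exhibit a nonzero map $C(\lambda) \to L$ via any $v \in L$ with $v\cdot C_{\s\t} \neq 0$; its image is $L$, so $L \cong D^\lambda$.

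For (b), the strategy is to compare dimensions. If $\prod_\lambda \det G_\lambda \neq 0$, then $\phi_\lambda$ is nondegenerate for every $\lambda$, so $\Rad C(\lambda) = 0$ and every cell module is simple. The cell filtration of the regular representation then shows
\[
\dim_\kappa A \;=\; \sum_{\lambda \in \Lambda} |T(\lambda)|^2 \;=\; \sum_{\lambda \in \Lambda} (\dim_\kappa D^\lambda)^2,
\]
which, combined with the fact that the pairwise non-isomorphic $D^\lambda$ are absolutely irreducible, forces $A$ to be split semisimple by Wedderburn. Conversely, if $A$ is split semisimple, then no $D^\lambda$ can vanish (else the above count drops below $\dim_\kappa A$) and each $C(\lambda)$, being a quotient-free extension of simples with simple head $D^\lambda$ and dimensional equality $\dim C(\lambda) = \dim D^\lambda$ coming from the Wedderburn count, must itself be simple; semisimplicity of $C(\lambda)$ together with the contravariance of $\phi_\lambda$ then gives $\Rad C(\lambda) = 0$, i.e.\ $\det G_\lambda \neq 0$.

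The main obstacle I anticipate is the irreducibility/distinctness portion of (a), i.e.\ showing that the $D^\lambda$ are absolutely irreducible and that $\lambda$ is recoverable from $D^\lambda$; the care needed is in choosing the right pairing element and then using the cell axiom (c) of Definition~\ref{GL} cleanly modulo $A^{\rhd\lambda}$. Part (b) is then essentially a dimension count once (a) is in hand, and the converse direction there is the only subtle point, since one must argue that semisimplicity propagates from $A$ to each $C(\lambda)$.
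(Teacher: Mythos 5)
Your proposal is a correct reconstruction of the standard Graham--Lehrer argument; the paper itself gives no proof of this theorem, merely citing \cite{GL}, and your sketch (contravariance of $\phi_\lambda$ makes $\Rad C(\lambda)$ a submodule, pairing with a nonzero value of $\phi_\lambda$ shows $D^\lambda$ is absolutely irreducible, $\lambda$ is recovered from the annihilator filtration by the ideals $A^{\unrhd\mu}$, and the dimension count $\dim A=\sum_\lambda|T(\lambda)|^2$ against $\sum(\dim D^\lambda)^2$ for part (b)) matches the source. The only soft spot is the phrase ``quotient-free extension of simples'' in the converse of (b); the cleaner statement is that split semisimplicity gives $\dim A=\sum_{D^\lambda\neq0}(\dim D^\lambda)^2\le\sum_\lambda|T(\lambda)|^2=\dim A$, forcing equality, hence $C(\lambda)=D^\lambda$ and $\det G_\lambda\neq0$ for every $\lambda$.
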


We remark that we will use Theorem~\ref{cell-tool} frequently in
sections~5-6.  Via Theorem~\ref{celb}, we have the notion of cell
modules for $\mathscr B_{r, s}$. We use Theorem~\ref{cell-tool}
 to classify the irreducible $\mathscr B_{r, s}$-module over $\kappa$. Let    $\phi_{f,
\lambda}$ be  the corresponding invariant form on $C(f, \lambda)$,
$(f, \lambda)\in \Lambda_{r, s}$.

By abuse of notations, we use  $ \mathscr H_{r-f}$ (resp. $\mathscr H_{s-f}$)
 to denote the subalgebra of ${\mathscr B}_{r,s}(f)$ in Lemma~\ref{quo1}. Then  $\{\n_{\s\t}|\s,
\t\in\Std(\lambda),\lambda\in\Lambda_{r,s}^f \}$ is a cellular basis of   $\mathscr H_{r-f}\otimes \mathscr H_{s-f}$.
 Let $\phi_{\lambda}$ be the invariant form on the cell module  $C(\lambda)$ of
$\mathscr H_{r-f}\otimes \mathscr H_{s-f}$ with respect to
$\lambda\in \Lambda_{r,s}^f$. In the following, we denote $\mathscr
H_{r-f}\otimes \mathscr H_{s-f}$  by $\mathbf H(f)$ and  use $\n_\t$
to denote $\n_{\t^\lambda\t}+\mathbf H(f)^{\rhd \lambda}$

\begin{Lemma}\label{bilequal} Let $\mathscr B_{r,s}$ be defined over $\kappa$.  Suppose  $(f,\lambda)\in\Lambda_{r,s} $.
\begin{enumerate} \item  If either  $r\neq s $ or $r=s$ and $f<r$, then $\phi_{f,\lambda}\neq 0\Leftrightarrow\phi_{\lambda}\neq 0$,
\item If $r=s=f$,  then $\phi_{f,0}=0 \Leftrightarrow \delta=0$.\end{enumerate}
\end{Lemma}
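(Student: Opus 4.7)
The plan is to unpack the defining identity of $\phi_{f,\lambda}$ and reduce the question to the non-vanishing of $\phi_\lambda$ on the Hecke-algebra cell module. Since $g_e$ is invertible, the scalar $\phi_{f,\lambda}(C_{(\t,d)}, C_{(\t',d')})$ is determined by
\[
e^f \n_{\s\t} g_d \sigma(g_{d'}) e^f \n_{\t'\s} \equiv \phi_{f,\lambda}(C_{(\t,d)}, C_{(\t',d')})\, e^f \n_{\s\s} \pmod{\mathscr B_{r,s}^{\rhd(f,\lambda)}}.
\]
Because $\n_{\s\t}, \n_{\t'\s}\in\mathbf H(f)$ commute with $e^f$ (Lemma~\ref{tool}(a)), the essential object is $e^f g_d \sigma(g_{d'}) e^f$ modulo $\mathscr B_{r,s}^{f+1}$. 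A mild extension of Lemma~\ref{efe}(a) yields the inclusion $e^f \mathscr B_{r,s} e^f \subseteq \mathscr B_{r,s}(f)\, e^f + \mathscr B_{r,s}^{f+1}$; combined with Lemma~\ref{quo1}, this produces a unique $h(d,d') \in \mathbf H(f)$ with
\[
e^f g_d \sigma(g_{d'}) e^f \equiv h(d,d')\, e^f \pmod{\mathscr B_{r,s}^{f+1}}.
\]
Using cellularity of $\mathbf H(f)$ (Theorem~\ref{cellh}), this translates into the clean formula
\[
\phi_{f,\lambda}(C_{(\t,d)}, C_{(\t',d')}) = \phi_\lambda\bigl(\n_\t \cdot h(d,d'),\ \n_{\t'}\bigr).
\]

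For (a), the hypothesis forces $r>f$ or $s>f$; assume $r>f$ (the other case is handled symmetrically via the $g_j^*$). Taking $d=1$ and $d' = s_f s_{f-1}\cdots s_1 \in \mathscr D_{r,s}^f$ (so $\sigma(g_{d'}) = g_1 g_2 \cdots g_f$), the key identity is $e^f g_1 g_2\cdots g_f\, e^f = \rho^f e^f$. This is proved by induction on $f$: using Lemma~\ref{tool}(e) to rewrite $e_1 e_2 g_1 = e_2 e_1 g_1^*$, then Definition~\ref{wbmw}(d),(g) to pass $g_1^*$ and $e_1$ past $g_2,\ldots,g_f$, and finally Lemma~\ref{tool}(c) ($e_1 g_1^* e_1 = \rho e_1$) to extract one factor $\rho$ and reduce to the $(f-1)$-case inside the subalgebra $\mathscr B_{r,s}(1) \cong \mathscr B_{r-1,s-1}$. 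Consequently $h(1, s_f\cdots s_1) = \rho^f$ is a unit in $R$, and
\[
\phi_{f,\lambda}(C_{(\t,1)}, C_{(\t', s_f\cdots s_1)}) = \rho^f\,\phi_\lambda(\n_\t,\n_{\t'}),
\]
so $\phi_{f,\lambda}\ne 0 \Leftrightarrow \phi_\lambda\ne 0$, independently of whether $\delta=0$.

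For (b), the equality $r=s=f$ forces $\mathbf H(f)=\kappa$, $\Lambda_{r,s}^f=\{0\}$, and $\mathscr B_{r,s}^{\rhd(f,0)}=0$, so the defining identity becomes a genuine equality $e^f g_d \sigma(g_{d'}) e^f = \phi_{f,0}(C_d, C_{d'})\, e^f$. Setting $d=d'=1$ gives $\phi_{f,0}(C_1,C_1)=\delta^f$, so $\delta\ne 0 \Rightarrow \phi_{f,0}\ne 0$. Conversely, $\mathscr D_{r,s}^f = \mathfrak G_f$ is generated by $s_i s_i^*$, and a direct case analysis using Lemmas~\ref{tool}(c),(e),(g) together with Definition~\ref{wbmw}(g) shows that every product $e^f g_d \sigma(g_{d'}) e^f$ lies in $\delta\, R\, e^f$: geometrically, the degeneracy $r=s=f$ forces loop closures, so each product introduces at least one factor of $\delta$. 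The principal technical obstacle is the inductive identity $e^f g_1\cdots g_f\, e^f = \rho^f e^f$ in part (a), which requires careful bookkeeping of the mixed braid-and-wall relations of $\mathscr B_{r,s}$.
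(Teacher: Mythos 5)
Your part (a) is sound and essentially parallels the paper's argument, though you've organized it more systematically. The reduction to a formula $\phi_{f,\lambda}\bigl(C_{(\t,d)},C_{(\t',d')}\bigr)=\phi_\lambda\bigl(\n_\t\cdot h(d,d'),\n_{\t'}\bigr)$ for some $h(d,d')\in\mathbf H(f)$, followed by the specialization $h(1,s_f\cdots s_1)=\rho^f$, cleanly yields both implications at once. The paper proves the two directions separately (its nonvanishing computation uses $g^*_{f+1,1}$ rather than your $g_1\cdots g_f$, which requires $s>f$; your choice requires $r>f$, and your remark that the two cases are symmetric handles the possibility $r>s=f$, which the paper's version implicitly glosses over). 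Your inductive proof of $e^f g_1\cdots g_f e^f=\rho^f e^f$ is correct: pull $e_3\cdots e_f$ to the left, apply $e_1e_2g_1=e_2e_1g_1^*$, slide $e_1$ and $g_1^*$ past $g_2,\ldots,g_f$, extract $\rho$ via $e_1g_1^*e_1=\rho e_1$, and recurse in $\mathscr B_{r,s}(1)$.

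For part (b), the direction $\delta\neq 0\Rightarrow\phi_{f,0}\neq 0$ via $\phi_{f,0}(C_1,C_1)=\delta^f$ is fine. The converse ($\delta=0\Rightarrow\phi_{f,0}=0$) is where you have a genuine gap. The claim that ``a direct case analysis \ldots shows that every product $e^f g_d\sigma(g_{d'})e^f$ lies in $\delta R\, e^f$'' is stated as a fact, but the justification (``geometrically, the degeneracy $r=s=f$ forces loop closures'') is a heuristic, not a proof; nothing in the lemma statements you cite produces the required factor of $\delta$ without further argument. Moreover, your stated ``mild extension'' $e^f\mathscr B_{r,s}e^f\subseteq\mathscr B_{r,s}(f)e^f+\mathscr B_{r,s}^{f+1}$ is too weak here: when $r=s=f$ the right-hand side collapses to $\kappa e^f$, which does not force $\phi_{f,0}$ to vanish. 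What you need is the sharper statement that an $f$-fold iteration of Lemma~\ref{efe}(a) (the first $f-1$ applications are legitimate since they occur in subalgebras $\mathscr B_{r,s}(k)\cong\mathscr B_{f-k,f-k}$ with $f-k\ge 2$) reduces $e^f\mathscr B_{f,f}e^f$ to $e^{f-1}\cdot e_f\mathscr B_{f,f}(f-1)e_f$, and the last factor is, via $\mathscr B_{f,f}(f-1)\cong\mathscr B_{1,1}=\kappa\cdot 1\oplus\kappa\cdot e_1$, exactly $\delta\kappa\, e_f$. So $e^f\mathscr B_{f,f}e^f=\delta\kappa\, e^f$, which vanishes when $\delta=0$. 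This is the paper's argument, and it is what your ``mild extension'' should have been sharpened to.
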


\begin{proof} If  $\phi_{\lambda}\neq 0$, then  $\phi_{\lambda} (\n_\s, \n_\t)\neq 0$
 for some  $\s, \t\in \Std(\lambda)$.  We have $\phi_{f,\lambda}\neq 0$ since
 $$\n_{\ts^\lambda \s}e^f g^*_{f+1,1} e^f \n_{\t \t^\lambda}  \equiv\phi_{\lambda}(\n_{\s}, \n_{\t
})\rho^{f}e^f \n_{\t^\lambda\t^\lambda} \pmod{
\mathscr{B}_{r,s}^{\rhd(f,\lambda)}}. $$

If $\phi_{f,\lambda}\neq 0$, then  $\phi_{f, \lambda}(e^f\n_{\s}
g_{d}, e^f \n_{\t} g_{e} )\neq 0$ for some $(\s, d), (\t, e)\in I(f,
\lambda)$. We have $\phi_{\lambda}\neq 0$. Otherwise,
$\n_{\t^\lambda \s} h \n_{\t\t^\lambda}\equiv 0 \pmod { \mathbf
H(f)^{\rhd \lambda}}$, for all $h\in \mathbf H(f)$. Since
$$e^f\n_{\t^\lambda \s}g_d \sigma (g_e)e^f \n_{\t\t^\lambda}\equiv
\n_{\t^\lambda \s} h \n_{\t\t^\lambda} e^f \pmod {\mathscr B_{r,
s}^{f+1}}, $$ we have   $\phi_{f, \lambda}(e^f\n_{\s} g_{d}, e^f
\n_{\t} g_{e} )= 0$ , a contradiction.
 This completes the proof of (a).

 Suppose  $r=s=f$ and   $\delta=0$.  Using
Lemma~\ref{efe}(a) repeatedly yields the following inclusion:
$e^fg_dg_e^*e^f\subseteq e^{f-1} e_f  {\mathscr B}_{f,f}(f-1)e_f$.
Therefore, we need to verify  $e_f {\mathscr B}_{f,f}(f-1)e_f=0$,
which is equivalent to the equality $e_1 \mathscr B_{1,1} e_1=0$. In
fact, this follows since $\mathscr B_{1,1}=\{1, e_1\}$ and
$\delta=0$.  Conversely, the result follows from the equalities $e^f
e^f=\delta^f e^f=0$.
\end{proof}

Let $e$ be the least positive integer such that $1+q^2+\cdots +q^{2(e-1)}=0$ in $\kappa$. If there
is no such an $e$, we set $e=\infty$.

   Suppose
$\lambda=(\lambda_1, \lambda_2, \cdots)$ is a partition. Recall that
$\lambda$ is $e$-restricted if $\lambda_i-\lambda_{i+1}<e$ for all
possible $i$. If $\lambda=(\lambda^{(1)}, \lambda^{(2)})$, then
$\lambda$ is said to be  $e$-restricted if both $\lambda^{(1)}$ and
$\lambda^{(2)}$ are $e$-restricted. The following result follows
from Lemma~\ref{bilequal}, immediately.

\begin{Theorem}\label{main1}
Let $\mathscr{B}_{r,s}$ be the quantized walled Brauer algebra over
the field $\kappa$.
\begin{enumerate}\item If either $\delta\neq 0$ or $\delta=0$ and $r\neq s$,
then the non-isomorphic irreducible $\mathscr{B}_{r,s}$--modules are
indexed by $\{(f, \lambda)\mid  0\le f\le \min\{r, s\}, \lambda \text{ being $e$-restricted}\}$.
\item If  $\delta=0$ and $r=s$,  then the non-isomorphic irreducible
$\mathscr{B}_{r,s}$--modules are indexed by  $\{(f, \lambda)\mid  0\le f<r, \lambda \text{ being $e$-restricted}\}$.
\end{enumerate}
\end{Theorem}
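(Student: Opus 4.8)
The plan is to derive Theorem~\ref{main1} as an immediate consequence of the classification of irreducible modules for cellular algebras (Theorem~\ref{cell-tool}(a)) combined with the reduction of the invariant forms $\phi_{f,\lambda}$ to the Hecke-algebra forms $\phi_\lambda$ established in Lemma~\ref{bilequal}. By Theorem~\ref{celb}, $\mathscr B_{r,s}$ is cellular over the poset $\Lambda_{r,s}$, so Theorem~\ref{cell-tool}(a) tells us that the non-isomorphic irreducible $\mathscr B_{r,s}$-modules are precisely the $D^{(f,\lambda)}=C(f,\lambda)/\Rad C(f,\lambda)$ for those $(f,\lambda)\in\Lambda_{r,s}$ with $D^{(f,\lambda)}\neq 0$, and $D^{(f,\lambda)}\neq 0$ if and only if $\phi_{f,\lambda}\neq 0$. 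Thus everything reduces to determining for which pairs $(f,\lambda)$ the form $\phi_{f,\lambda}$ is nonzero.

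First I would treat the case $r\neq s$, or $r=s$ with $f<\min\{r,s\}$ (equivalently $f<r$): here Lemma~\ref{bilequal}(a) gives $\phi_{f,\lambda}\neq 0 \Leftrightarrow \phi_\lambda\neq 0$, where $\phi_\lambda$ is the invariant form on the cell module $C(\lambda)$ of $\mathbf H(f)=\mathscr H_{r-f}\otimes\mathscr H_{s-f}$ attached to $\lambda=(\lambda^{(1)},\lambda^{(2)})\in\Lambda_{r,s}^f=\Lambda^+(r-f)\times\Lambda^+(s-f)$. Because the Murphy-type cellular basis of $\mathbf H(f)$ is the tensor product of the Murphy bases of the two Hecke factors, the Gram matrix $G_\lambda$ factors as $G_{\lambda^{(1)}}\otimes G_{\lambda^{(2)}}$, so $\phi_\lambda\neq 0$ iff $\phi_{\lambda^{(1)}}\neq 0$ and $\phi_{\lambda^{(2)}}\neq 0$. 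The classical Dipper--James theory of the Hecke algebra $\mathscr H_n$ (recalled via the parameter $e$, the least positive integer with $1+q^2+\cdots+q^{2(e-1)}=0$, or $e=\infty$) says $D^{\lambda^{(i)}}\neq 0$, i.e. $\phi_{\lambda^{(i)}}\neq 0$, precisely when $\lambda^{(i)}$ is $e$-restricted. Hence in this regime $\phi_{f,\lambda}\neq 0$ iff $\lambda$ is $e$-restricted, which gives part (a) of the theorem for all $f\le\min\{r,s\}$ when $\delta\neq 0$, and for all $f\le\min\{r,s\}$ when $\delta=0$ and $r\neq s$ (there is no exceptional top $f$ since $\min\{r,s\}<\max\{r,s\}$ means $\mathscr B_{r,s}(f)$ is never equal to $R$ for the relevant $f$; one checks the boundary $f=\min\{r,s\}$ still falls under Lemma~\ref{bilequal}(a) because $r\neq s$).

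Next I would handle the remaining case $r=s$, where the only pair not covered by Lemma~\ref{bilequal}(a) is $f=r=s$, for which $\lambda=0$ (the empty bipartition) and Lemma~\ref{bilequal}(b) gives $\phi_{f,0}=0\Leftrightarrow\delta=0$. So: if $\delta\neq 0$ then $\phi_{r,0}\neq 0$ and the pair $(r,0)$ contributes an irreducible module, and combined with the previous paragraph (applied to $f<r$) we get that all $e$-restricted $(f,\lambda)$ with $0\le f\le r=\min\{r,s\}$ index the irreducibles — note $0$ is vacuously $e$-restricted — giving part (a). If $\delta=0$ and $r=s$, then $\phi_{r,0}=0$, so the pair $(r,0)$ does \emph{not} contribute, and the irreducibles are indexed by the $e$-restricted $(f,\lambda)$ with $0\le f<r=\min\{r,s\}$, which is part (b).

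The routine verifications — that the Gram matrix of $\mathbf H(f)$ is the tensor product of the two Hecke Gram matrices, and the precise statement that over $\kappa$ the Hecke algebra $\mathscr H_n$ has $D^\mu\neq 0$ iff $\mu$ is $e$-restricted — are standard and can simply be cited (e.g. \cite{Ma}). I expect the only genuinely delicate point to be bookkeeping at the boundary value $f=\min\{r,s\}$: one must confirm that Lemma~\ref{bilequal}(a) really does apply there whenever $r\neq s$ (so that no spurious exclusion occurs), and, when $r=s$, that the $f=r$ case is governed \emph{only} by the vanishing of $\delta$ and not additionally by any $e$-restrictedness condition (which is automatic since $\lambda=0$). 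Once these edge cases are pinned down, the theorem follows by assembling the four cases above.
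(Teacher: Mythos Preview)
Your proposal is correct and follows essentially the same approach as the paper, which simply states that the result follows immediately from Lemma~\ref{bilequal}. You have spelled out in detail exactly the argument the paper leaves implicit: combine Theorem~\ref{cell-tool}(a) with Lemma~\ref{bilequal} and the standard Dipper--James classification of simple Hecke modules by $e$-restricted partitions.
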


\begin{rem}  Enyang~\cite{Enyang2} classified the irreducible $\mathscr B_{r, s}$-modules by
using the conditions $D^{f, \lambda}\neq 0$. However, there is no
further information about $(f, \lambda)$ in \cite{Enyang2}.
\end{rem}
The following result follows from Theorem~\ref{main1} and \cite[3.10]{GL}.
\begin{Cor}\label{main11}  Let $\mathscr{B}_{r,s}$ be the quantized walled Brauer algebra over
the field $\kappa$. Then $\mathscr B_{r,s}$ is quasi-hereditary in
the sense of \cite{CPS} if and only if $e> \max\{r,s\}$ and either
$\delta\neq 0$,  or $\delta=0$ and $r\neq s$.
\end{Cor}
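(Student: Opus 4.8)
The plan is to derive this from Theorem~\ref{main1} and the standard quasi-heredity criterion for cellular algebras, \cite[3.10]{GL}: a cellular algebra over a field whose cell datum is indexed by a poset $\Lambda$ is quasi-hereditary in the sense of \cite{CPS} if and only if $D^\lambda\neq 0$ for every $\lambda\in\Lambda$ (equivalently, the number of pairwise non-isomorphic irreducibles equals $|\Lambda|$). By Theorem~\ref{celb} the cell datum of $\mathscr B_{r,s}$ is indexed by $\Lambda_{r,s}$, so I must determine exactly when $D^{f,\lambda}\neq 0$ for all $(f,\lambda)\in\Lambda_{r,s}$.

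I would first record the elementary fact that every partition of an integer $n$ is $e$-restricted if and only if $n<e$: for $n<e$ any $\mu\vdash n$ has $\mu_i-\mu_{i+1}\le\mu_1\le n<e$, while the one-row partition $(n)$ has $(n)_1-(n)_2=n\ge e$ once $e\le n$. Since a pair in $\Lambda_{r,s}$ has the form $(f,\lambda)$ with $\lambda=(\lambda^{(1)},\lambda^{(2)})$, $\lambda^{(1)}\vdash r-f$, $\lambda^{(2)}\vdash s-f$, and $\lambda$ is $e$-restricted exactly when both components are, the condition ``$\lambda$ is $e$-restricted for every $(f,\lambda)\in\Lambda_{r,s}$'' is equivalent to ``$r-f<e$ and $s-f<e$ for all $0\le f\le\min\{r,s\}$'', which (taking $f=0$) amounts to $e>\max\{r,s\}$.

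Next I would split according to the two cases of Theorem~\ref{main1}. If $\delta\neq 0$, or $\delta=0$ and $r\neq s$, then Theorem~\ref{main1}(1) gives $D^{f,\lambda}\neq 0$ precisely when $\lambda$ is $e$-restricted, so by the preceding paragraph $\mathscr B_{r,s}$ is quasi-hereditary if and only if $e>\max\{r,s\}$; explicitly, if $e\le\max\{r,s\}$ then one of $r,s$ is $\ge e$, say $r$ (the other case is symmetric by Remark~\ref{rbigs}), and the pair $(0,((r),\mu))\in\Lambda_{r,s}$ with $\mu\vdash s$ arbitrary has $\lambda$ not $e$-restricted, hence $D^{0,((r),\mu)}=0$. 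If instead $\delta=0$ and $r=s$, then $\Lambda_{r,r}$ contains the pair $(r,\lambda)$ with $f=r$, for which necessarily $\lambda^{(1)}=\lambda^{(2)}=\emptyset$; Theorem~\ref{main1}(2) indexes the irreducibles only by pairs with $f<r$, so $D^{r,\lambda}=0$ and $\mathscr B_{r,r}$ fails to be quasi-hereditary for every value of $e$, consistent with the corollary's hypothesis excluding the case $\delta=0$, $r=s$. Assembling the two cases yields the stated equivalence.

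This is essentially a bookkeeping comparison of the index set $\Lambda_{r,s}$ of the cellular basis against the list of nonzero $D^{f,\lambda}$ supplied by Theorem~\ref{main1}; the only delicate point is invoking \cite[3.10]{GL} in its correct form (quasi-hereditary $\iff$ every cell module has nonzero invariant form $\iff$ every $D^\lambda\neq 0$), and no genuine obstacle is expected.
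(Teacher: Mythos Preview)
Your proposal is correct and follows exactly the route the paper indicates: the corollary is stated to follow from Theorem~\ref{main1} and \cite[3.10]{GL}, and you have simply unpacked this, verifying that ``every $\lambda$ in $\Lambda_{r,s}^f$ is $e$-restricted for all $f$'' is equivalent to $e>\max\{r,s\}$ and that the case $\delta=0$, $r=s$ always fails because $D^{r,(\emptyset,\emptyset)}=0$. There is nothing to add.
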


\section{A criterion on the semi-simplicity of $\mathscr B_{r, s}$}
In this section, we give a necessary and sufficient condition for
$\mathscr B_{r, s}$ being semisimple over an arbitrary field
$\kappa$. We start by recalling Kosuda and Murakami's result as
follows.

\begin{Lemma}\label{KoM}\cite[6.7]{KM} Let $\mathscr B_{r, s}$ be defined over $\mathbb C$ with
$\rho=q^n$ and $e=\infty$. Then $\mathscr B_{r, s}$ is semisimple if
$n\ge r+s$.\end{Lemma}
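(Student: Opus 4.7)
The plan is to realize $\mathscr{B}_{r,s}$ as the centralizer of a semisimple action. Since $e=\infty$ (so $q$ is not a root of unity), the Drinfeld--Jimbo quantum group $U_q(\mathfrak{gl}_n)$ is semisimple on its category of finite-dimensional type-$\mathbf{1}$ modules, and in particular every tensor product of copies of the natural module $V=\mathbb{C}^n$ and its dual $V^*$ decomposes as a direct sum of simple modules.

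I would then use the mixed Schur--Weyl action of $\mathscr{B}_{r,s}$ on $M := V^{\otimes r}\otimes (V^*)^{\otimes s}$: the generators $g_i$ and $g_j^*$ act via $R$-matrices on adjacent tensor factors of $V^{\otimes r}$ and $(V^*)^{\otimes s}$ respectively, while $e_1$ acts through the composition of the evaluation $V\otimes V^*\to\mathbb{C}$ (on the $r$-th and first dual factors) with its coevaluation. The parameter $\rho=q^n$ enters because the quantum dimension of $V$ is $[n]_q=(q^n-q^{-n})/(q-q^{-1})=\delta$. This gives an algebra homomorphism $\Phi\colon\mathscr{B}_{r,s}\to\operatorname{End}_{U_q(\mathfrak{gl}_n)}(M)$, and by \cite{KM} this map is surjective for every $n$.

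The remaining task is to prove that $\Phi$ is injective when $n\geq r+s$. For this, I would decompose $M$ into $U_q(\mathfrak{gl}_n)$-isotypic components. By the quantum analogue of the classical mixed-tensor decomposition, the simple summands of $M$ are parametrized by bipartitions $(\lambda^{(1)},\lambda^{(2)})$ with $|\lambda^{(1)}|=r-f$, $|\lambda^{(2)}|=s-f$ for some $0\leq f\leq\min\{r,s\}$, subject to the length constraint $\ell(\lambda^{(1)})+\ell(\lambda^{(2)})\leq n$, and the multiplicity with which such a summand appears equals $\dim C(f,\lambda)$ by the branching chain in Theorem~\ref{branching}. When $n\geq r+s$ the length constraint is vacuous, because $\ell(\lambda^{(1)})+\ell(\lambda^{(2)})\leq(r-f)+(s-f)\leq r+s\leq n$. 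Therefore
\[
\dim\operatorname{End}_{U_q(\mathfrak{gl}_n)}(M)=\sum_{(f,\lambda)\in\Lambda_{r,s}}(\dim C(f,\lambda))^2=(r+s)!=\dim\mathscr{B}_{r,s},
\]
where the middle equality uses the cellular basis from Theorem~\ref{celb}. Hence $\Phi$ is an isomorphism, and $\mathscr{B}_{r,s}$ is semisimple as the endomorphism algebra of a semisimple module over a semisimple category.

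The main obstacle is the surjectivity of $\Phi$, which is the genuinely nontrivial content of mixed Schur--Weyl duality at the quantum level; this is established in \cite{KM} via an explicit analysis of singular vectors in $M$. The injectivity half, by contrast, reduces to the bipartition combinatorics above together with the standard dimension identity for cell modules, and should be the routine portion of the argument.
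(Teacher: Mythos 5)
The paper does not prove this lemma; it is cited verbatim from \cite[Theorem~6.7]{KM} and used only as a black box --- its sole role in the paper is to deduce, in Lemma~\ref{resi}, that $c_{r,s}$ acts by a scalar over the fraction field of $R$. Your proposal reconstructs the mixed Schur--Weyl duality argument that underlies the cited result, and the outline is essentially sound: semisimplicity of $M = V^{\otimes r}\otimes(V^*)^{\otimes s}$ at non-root-of-unity $q$, surjectivity of the $\mathscr{B}_{r,s}$-action, and the dimension identity $\sum(\dim C(f,\lambda))^2 = (r+s)! = \dim\mathscr{B}_{r,s}$ once the length constraint $\ell(\lambda^{(1)})+\ell(\lambda^{(2)})\leq n$ becomes vacuous for $n\geq r+s$.

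Two caveats. First, identifying the multiplicity of $L_{(f,\lambda)}$ in $M$ with $\dim C(f,\lambda)$ does not follow from Theorem~\ref{branching} alone: that theorem governs restriction of cell modules on the $\mathscr{B}_{r,s}$-side, and you must pair it with the parallel Pieri-type rule for $L_{(f,\lambda)}\otimes V$ on the $U_q(\mathfrak{gl}_n)$-side, check the base case $r=s=0$, and note that $n\geq r+s$ keeps the length constraint vacuous at every step of the induction so the two recursions really do match. Second, the genuinely nontrivial input --- surjectivity of $\Phi$ --- is itself taken from \cite{KM}, so your argument is a careful gloss of the cited reference rather than an independent proof; that is appropriate here, since the paper treats the statement as imported.
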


Let $c_{r, s}\in \mathscr B_{r, s}$ be defined in Proposition~\ref{ccen}.  We want to compute the action of
$c_{r, s}$  on each cell module of $C(f, \lambda)$ for all $(f,
\lambda)\in \Lambda_{r, s}$. We remark that
all cell modules in this sections are left cell modules. Via anti-involution $\sigma$ in Lemma~\ref{inv}, they can be considered as
right modules.

\begin{Defn} Suppose $(f, \lambda)\in \Lambda_{r, s}$ with
$\lambda=(\lambda^{(1)}, \lambda^{(2)})$. \begin{enumerate} \item If
$p$ is in the $i$th row and $j$th column of the Young diagram
$[\lambda^{(1)}]$, we define $$c(p)=\frac{1-q^{2k}}{q-q^{-1}},$$
where $k=\res(p)=j-i$.
\item If
$p$ is in the $i$th row and $j$th column of the Young diagram
$[\lambda^{(2)}]$, we define $$c(p)=\frac{1-q^{-2k}}{q^{-1}-q}, $$
where $k=\res(p)=j-i$.

\end{enumerate}
\end{Defn}

\begin{Lemma}\label{resi}  For  $(f, \lambda)\in \Lambda_{r, s}$, let
 $C(f, \lambda)$ be the cell module of $\mathscr B_{r,s}$ with respect to the cellular basis in Theorem~\ref{celb}.
  Then $c_{r, s}$ acts on  $C(f, \lambda)$
as scalar $f\delta-\rho^{-1}\sum_{p\in [\lambda^{(1)}]}
c(p)-\rho\sum_{p\in [\lambda^{(2)}]} c(p)\in R$.
 \end{Lemma}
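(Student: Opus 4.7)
The plan is to exploit that $c_{r,s}$ acts on $C(f,\lambda)$ by a single scalar $\kappa(f,\lambda)$---centrality of $c_{r,s}$ (Proposition~\ref{ccen}) together with cyclicity of the cell module forces any generator to be an eigenvector and determines the scalar. It remains to identify $\kappa(f,\lambda)$ with the claimed expression, and I would do so by induction on $r+s$, taking $r\ge s$ via Remark~\ref{rbigs}. The base case $r+s=2$ (so $r=s=1$ and $c_{1,1}=e_1$) is a direct check: on $C(1,\emptyset)$, $e_1$ acts as $\delta$, matching $f\delta=\delta$; on $C(0,((1),(1)))$, $e_1\in\mathscr{B}_{1,1}^{1}$ annihilates the cell module, matching the vanishing right--hand side (the unique box in each of $\lambda^{(1)},\lambda^{(2)}$ has content $0$).

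For the inductive step I would use the identity
\[
c_{r,s}-c_{r-1,s}=\sum_{j=1}^{s}\bar e_{r,j}-\rho^{-1}\sum_{j=1}^{r-1}g_{j,r}^{-1}g_{r,j+1}^{-1}
\]
recorded in the proof of Proposition~\ref{ccen}. The branching rule (Theorem~\ref{branching}) supplies a filtration of $C(f,\lambda)$ restricted to $\mathscr{B}_{r-1,s}$ whose subquotients are $C(f,\alpha^{(i)})$ (for $p_i\in\mathscr R(\lambda^{(1)})$) and $C(f-1,\beta^{(j)})$ (for $q_j\in\mathscr A(\lambda^{(2)})$), and on each of these $c_{r-1,s}$ acts by the inductive scalar. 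Since $c_{r,s}$ acts by $\kappa(f,\lambda)$ on the full module, the difference $c_{r,s}-c_{r-1,s}$ automatically acts by a scalar on every subquotient. The candidate formula pins these scalars down to
\[
\kappa(f,\lambda)-\kappa(f,\alpha^{(i)})=-\rho^{-1}c(p_i),\qquad \kappa(f,\lambda)-\kappa(f-1,\beta^{(j)})=\delta+\rho\, c(q_j),
\]
so the induction closes once one verifies these two eigenvalue identities by an explicit computation of $(c_{r,s}-c_{r-1,s})$ on the specific generators $y_{\alpha^{(i)}}$ (Lemma~\ref{nk}) and $z_{\beta^{(j)}}$ (Lemma~\ref{zbeta}) of the filtration pieces.

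The main obstacle is precisely this direct eigenvalue verification. For $y_{\alpha^{(i)}}=g_{r,a_i}\n_\lambda e^f+\mathscr{B}_{r,s}^{\rhd(f,\lambda)}$, each $\bar e_{r,j}$ pushed past $g_{r,a_i}$ lands in $\mathscr{B}_{r,s}^{f+1}\subseteq\mathscr{B}_{r,s}^{\rhd(f,\lambda)}$ because the strand at row $r>f$ gets absorbed into an additional $e$-factor via Lemma~\ref{tool}(a),(d); meanwhile the last Jucys--Murphy slice $\sum_{j<r}g_{j,r}^{-1}g_{r,j+1}^{-1}$ commutes through $g_{r,a_i}$ and, by the standard Hecke--algebra computation, acts on $\n_{\lambda^{(1)}}$ as the quantum content of the removed box $p_i$, producing the eigenvalue $-\rho^{-1}c(p_i)$ modulo $N_{i-1}$. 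For $z_{\beta^{(j)}}$, the prefactor $g_{r,f}$ routes strand $r$ into position $f+1$, so that among $\sum_j\bar e_{r,j}$ only contributions compatible with $\bar e_{r,r}$-type absorption survive and yield $\delta$; the Jucys--Murphy piece then relocates onto the $g^*$-side, where by Lemma~\ref{tool}(e)--(f) it acts on $\n_{\lambda^{(2)}}$ as the content of the added box $q_j$, contributing $\rho\, c(q_j)$ and combining to $\delta+\rho\, c(q_j)$. Both verifications are delicate but routine applications of the relations of Lemma~\ref{tool} together with the braid identities, with all lower--order corrections vanishing modulo the corresponding $N_{i-1}$ or $M_{j-1}$.
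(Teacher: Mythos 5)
Your route differs genuinely from the paper's. The paper does \emph{not} induct via the branching rule: after establishing that $c_{r,s}$ acts by a scalar, it computes $c_{r,s}\cdot e^f\n_\lambda$ directly, showing that $e_{i,i}e^f=\delta e^f$ for $i\le f$, that $\bar e_{i,j}e^f$ lies in $\mathscr B_{r,s}^{f+1}$ when $i,j>f$, and simplifying the remaining $\bar e_{i,j}e^f$ terms into products of $g$'s and $g^*$'s, so that the problem collapses to the standard Jucys--Murphy eigenvalue computation for $\H_{r-f}\otimes\H_{s-f}$ (as in \cite[3.32]{Ma}). This is done in one step, over $R$, on a single generator, and requires no filtration bookkeeping. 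Your proposal trades this single computation for an induction on $r+s$ through Theorem~\ref{branching}; the arithmetic identities
$\kappa(f,\lambda)-\kappa(f,\alpha^{(i)})=-\rho^{-1}c(p_i)$ and
$\kappa(f,\lambda)-\kappa(f-1,\beta^{(j)})=\delta+\rho\,c(q_j)$
are indeed consistent with the target formula, so the framework is coherent in outline.

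Two issues, however. First, the justification ``centrality together with cyclicity of the cell module forces any generator to be an eigenvector'' is not a proof: a central element need not act by a scalar on an arbitrary cyclic module. The paper proves scalar action by invoking Lemma~\ref{KoM} to get semisimplicity of $\mathscr B_{r,s}$ over the fraction field $F$ of $R$, so each $C(f,\lambda)$ is absolutely irreducible over $F$ and Schur's lemma applies; the scalar then lies in $R$ because both $c_{r,s}$ and the cell basis are defined over $R$. (Alternatively one can argue via the $(A,A)$-bimodule decomposition $A^{\unrhd\lambda}/A^{\rhd\lambda}\cong\Delta^L(\lambda)\otimes\Delta^R(\lambda)$, which forces left and right actions of a central element to be simultaneous scalars.) You need one of these; cyclicity alone does not deliver it. Second, the entire content of your inductive step lives in the claimed eigenvalue evaluations of $(c_{r,s}-c_{r-1,s})$ on $y_{\alpha^{(i)}}$ and $z_{\beta^{(j)}}$, which are only sketched. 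These are no easier than the paper's direct computation, and they are made more delicate by the requirement to work \emph{modulo $N_{i-1}$ or $M_{j-1}$} (which are given only as spans of generators, not by an explicit complementary basis) rather than merely modulo $\mathscr B_{r,s}^{\rhd(f,\lambda)}$. You also need to handle edge cases (e.g.\ $\lambda^{(1)}=\emptyset$ so $a=0$; $f=0$; $r=1$) where the filtration degenerates, and Theorem~\ref{branching} is stated over a field, so the induction must be run over $F$ and the scalar then pulled back to $R$. In short: your plan is plausible as architecture, but the scalar-action step is unjustified as written and the decisive computations are asserted, not performed.
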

\begin{proof} For any $(f, \lambda)\in \Lambda_{r, s}$, by Lemma~\ref{KoM}, $\det G_{f, \lambda}\neq 0$ under
some  specialization of  $R$. This implies that $\det G_{f,
\lambda}\neq 0$ over the field $F$ of the fractions of $R$. So,
$\mathscr B_{r, s}$ over $F$ is semisimple  and $C(f, \lambda)$ is
irreducible. Since $c_{r, s}$ is central, it acts on  $C(f,
\lambda)$ as scalar over $F$. Since both $C(f, \lambda)$ and $c_{r,
s}$ are defined over $R$, $c_{r, s}$ acts on $C(f, \lambda)$ as
scalar over $R$, too. So, we need only prove
\begin{equation}\label{identity} c_{r, s} y=(f\delta-\rho^{-1}\sum_{p\in [\lambda^{(1)}]}
c(p)-\rho\sum_{p\in [\lambda^{(2)}]} c(p)) y\end{equation} where
$y=e^f \n_\lambda+\mathscr B_{r, s}^{\rhd (f, \lambda)}$. It is easy to see that   $e_{i, i } e^f= \delta e^f$ for  $1\le i\le f$ and   $\bar e_{i, j} e^f \in \mathscr B_{r, s}^{f+1}$,  if $f<i, j$.
Further,
$$\bar e_{i, j } e^f=\begin{cases}
\rho^{-1} g_{j, i}^{-1} g_{i, j+1}^{-1} e^f, & \text{if
$1\le j\le f<i$,}\\
\rho  g^*_{j, i} g^*_{i+1, j} e^f, & \text{if $1\le i\le
f<j$,}\\
\rho g_{j, i}^*
g_{i+1, j}^*  e^f, & \text{if  $1\le i<j\le f$,}\\
\rho^{-1} g_{j, i}^{-1} g_{i, j+1}^{-1} e^f, & \text{if $1\le j<i\le
f$.}\\
\end{cases}$$
Therefore,
 $$c_{r, s} y=\{f\delta - \rho^{-1}\sum_{j=f+2}^r\sum_{i=f+1}^j
g_{w_{ij}}^{-1} -\rho \sum_{j=f+2}^s\sum_{i=f+1}^j
g^*_{w_{ij}}\} e^f \n_\lambda,$$
where $w_{ij}=(i, j)$, the
transposition which switches $i$ and $j$. Now, the result follows
from the arguments similar to those for Hecke algebras in
\cite[3.32]{Ma}.
\end{proof}

In the remaining part of this section, unless otherwise stated, we
assume that $e>\max\{r, s\}$. Otherwise, $\H_{r}\otimes \H_s$ is not
semisimple over $\kappa$. So is $\mathscr B_{r, s}$.

Let $\lambda$ and $\mu$ be two partitions. We write $\lambda\supseteq \mu$ if $\lambda_i\ge \mu_i$ for all possible $i$'s.
Let $[\lambda/\mu]$ be the skew Young diagram obtained from
$[\lambda]$ by removing nodes in $[\mu]$.

For general cellular algebra $(A, \Lambda)$,
we use $[C(\lambda): D^\mu]$
to denote the multiplicity of irreducible $A$-module  $D^\mu$ in the cell module  $C(\lambda)$.

\begin{Lemma}\label{zero1} Suppose $(0, \lambda), (1, \mu)\in \Lambda_{r, s}$.
  If $ [C(0, \lambda): C(1, \mu)]\neq 0 $,
 then $\lambda^{(i)}\supset \mu^{(i)}$ and  $[\lambda^{(i)}/\mu^{(i)}]=\{p_i\}$ for some $p_i\in \mathscr R(\lambda^{(i)})$ such that
$\rho^2=q^{2k}$ and $k= \res(p_1)+\res(p_2)$.
\end{Lemma}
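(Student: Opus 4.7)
The approach hinges on the centrality of $c_{r,s}$ from Proposition~\ref{ccen} together with the explicit formula for its action on each cell module given in Lemma~\ref{resi}. I interpret $[C(0,\lambda):C(1,\mu)]$ as the multiplicity of the simple head $D^{(1,\mu)}$ of $C(1,\mu)$ as a composition factor of $C(0,\lambda)$. Since $c_{r,s}$ is central, it acts by a single scalar on every simple $\mathscr{B}_{r,s}$-module, and this scalar is constant across all composition factors of any fixed module. Hence, if $D^{(1,\mu)}$ appears in $C(0,\lambda)$, then the scalar of $c_{r,s}$ on $C(0,\lambda)$ must equal the one on $C(1,\mu)$.

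Equating the two scalars via Lemma~\ref{resi} gives
$$-\rho^{-1}\sum_{p\in[\lambda^{(1)}]}c(p)-\rho\sum_{p\in[\lambda^{(2)}]}c(p)=\delta-\rho^{-1}\sum_{p\in[\mu^{(1)}]}c(p)-\rho\sum_{p\in[\mu^{(2)}]}c(p).$$
I would then substitute the explicit formulas for $c(p)$ on the two components, use $\delta=(\rho-\rho^{-1})/(q-q^{-1})$, and invoke the size constraints $|\lambda^{(i)}|-|\mu^{(i)}|=1$ (which follow from $(0,\lambda),(1,\mu)\in\Lambda_{r,s}$). A routine algebraic manipulation collapses the equality to the single key identity
$$Q_1(\lambda)-Q_1(\mu)=\rho^2\bigl(Q_2(\lambda)-Q_2(\mu)\bigr),$$
where $Q_1(\nu):=\sum_{p\in[\nu^{(1)}]}q^{2\res(p)}$ and $Q_2(\nu):=\sum_{p\in[\nu^{(2)}]}q^{-2\res(p)}$.

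The combinatorial heart of the argument, and the step I expect to be the main obstacle, is to deduce from this identity (together with the module-theoretic fact that $D^{(1,\mu)}$ actually occurs in $C(0,\lambda)$) that $\mu^{(i)}\subset\lambda^{(i)}$ with $[\lambda^{(i)}/\mu^{(i)}]=\{p_i\}$ for some $p_i\in\mathscr{R}(\lambda^{(i)})$. The algebraic identity by itself only constrains residue multisets and does not a priori force containment. My strategy is to view $Q_1(\lambda)-Q_1(\mu)$ and $Q_2(\lambda)-Q_2(\mu)$ as Laurent polynomials in $q$ and to treat $\rho$ as algebraically independent at the generic level of $R$, so that the identity can hold only after specialization $\rho^2=q^{2k}$ for some integer $k$, in which case both sides collapse to a single monomial $q^{2k}\cdot q^{-2\res(p_2)}$; matching residue multisets of partitions whose sizes differ by one then forces the box-removal structure. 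An alternative route is to restrict $C(0,\lambda)$ and $D^{(1,\mu)}$ to the Hecke subalgebra $\mathscr{H}_r\otimes\mathscr{H}_s\subset\mathscr{B}_{r,s}$, where $C(0,\lambda)$ becomes the Specht tensor product $S_{\lambda^{(1)'}}\otimes S_{\lambda^{(2)'}}$, and use the classical branching of Specht modules to control the possible $\mu^{(i)}$.

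Once the containment $\mu^{(i)}=\lambda^{(i)}\setminus\{p_i\}$ is established, the remaining step is immediate: we have $Q_1(\lambda)-Q_1(\mu)=q^{2\res(p_1)}$ and $Q_2(\lambda)-Q_2(\mu)=q^{-2\res(p_2)}$, so the key identity becomes $q^{2\res(p_1)}=\rho^2 q^{-2\res(p_2)}$, yielding $\rho^2=q^{2k}$ with $k=\res(p_1)+\res(p_2)$, as required.
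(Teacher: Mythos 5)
Your central character strategy, by itself, cannot establish the containment $\mu^{(i)}\subset\lambda^{(i)}$ with a single-box difference, and you correctly flag this as "the main obstacle" — but neither of your two proposed resolutions actually closes the gap. The suggestion to "treat $\rho$ as algebraically independent at the generic level of $R$" is circular: the conclusion $\rho^2=q^{2k}$ and the claim that both sides of the identity "collapse to a single monomial" are consequences of the box-removal structure, not routes to it, and over an arbitrary field $\kappa$ the ratio $\bigl(Q_1(\lambda)-Q_1(\mu)\bigr)/\bigl(Q_2(\lambda)-Q_2(\mu)\bigr)$ is simply some element of $\kappa$ with no a priori reason to be a power of $q^2$. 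Worse, the residue-multiset constraint you extract is genuinely weaker than containment: there exist pairs of partitions differing in size by one with no containment relation, and the identity does not rule them out without further input of a module-theoretic nature. Your "alternative route" — restricting to $\H_r\otimes\H_s$ — is aimed at the right kind of argument but is misdescribed (you would need an induced-module description of $\Res_{\H_r\otimes\H_s}C(1,\mu)$ and the Pieri rule, not Specht branching) and is left undeveloped.

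The paper's actual proof is structurally different: it proceeds by induction on $r+s$, restricting $C(0,\lambda)$ to $\mathscr B_{r-1,s}$ and invoking the branching Theorem~\ref{branching} to control which cell modules can host the relevant composition factor. The assumption $e>\max\{r,s\}$ (in force throughout Section~6) makes $\H_{r-1}\otimes\H_s$ semisimple, which pins down the composition factors of the restriction to be of the form $C(0,(\lambda^{(1)}\setminus\{p\},\lambda^{(2)}))$. The inductive hypothesis then forces containment one level down, and Lemma~\ref{resi} — the ingredient you lead with — enters only at the final comparison step, to argue $c(p)=c(\tilde p)$ and hence $p=\tilde p$ because distinct boxes have distinct residues when $e>\max\{r,s\}$. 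In other words, the central character in the paper is used as a rigidity lemma inside an induction, not as the source of the containment; your proof inverts that order and the argument does not go through.
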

\begin{proof} We prove our result  by induction on $r+s$. The case
$r+s=2$ is trivial. In this case, $\lambda=((1), (1))$ and
$\mu=(\emptyset, \emptyset)$. In general, we assume $r\ge 2$.
Otherwise, we switch the role between $r$ and $s$ in the following arguments.

 Since we are assuming $e>\max\{r, s\}$, we have $C(0, \lambda)
=D^{0, \lambda}$ for any $\lambda\in \Lambda_{r,s}^0$. We consider
the restriction of  $C(0, \lambda)$ to $\mathscr B_{r-1, s}$. Note
that any  composition factor of $C(0, \lambda)$ is of form $C(0,
(\lambda^{(1)}\setminus \{p\}, \lambda^{(2)}))$ for some $p\in
\mathscr R(\lambda^{(1)})$. By Theorem~\ref{branching}, there is a
$p\in \mathscr R(\lambda^{(1)})$ such that either
$$[C(1, (\mu^{(1)}\setminus \{\tilde p\}, \mu^{(2)})): C(0,
(\lambda^{(1)}\setminus \{p\}, \lambda^{(2)}))]\neq 0, \text{ for
some $\tilde p\in \mathscr R(\mu^{(1)})$, }$$ or
 $$[C(0, (\mu^{(1)},
\mu^{(2)}\cup \{p_2\})):C(0, (\lambda^{(1)}\setminus \{p\},
\lambda^{(2)}))]\neq 0, \text{ for some $p_2\in \mathscr
A(\mu^{(2)})$}.$$ In the first case, by induction assumption,
$\lambda^{(1)}\setminus \{p\}\supset \mu^{(1)}\setminus \{\tilde
p\}$ and $\lambda^{(2)}\supset \mu^{(2)}$. Using Lemma~\ref{resi} yields $c(p)=c(\tilde p)$. Since we are
assuming $e>\max\{r, s\}$, we have $p=\tilde p$. So, $\lambda^{(i)}\supset \mu^{(i)}$  and      $[\lambda^{(i)}/\mu^{(i)}]=\{p_i\}$ for some $p_i,
i=1,2$. In the second case, since $\H_{r-1}\otimes \H_s$ is
semisimple, we have $( \lambda^{(1)}\setminus \{p\},
\lambda^{(2)})=(\mu^{(1)}, \mu^{(2)}\cup \{p_2\})$. We still have   $\lambda^{(i)}\supset \mu^{(i)}$ and
 $[\lambda^{(i)}/\mu^{(i)}]=\{p_i\}$ with $p_1=p$. Finally, using  Lemma~\ref{resi} yields   $\rho^2=q^{2k}$
with $k=\res(p_1)+\res(p_2)$, as required. \end{proof}

\begin{Lemma}\label{onearc} Suppose $\lambda=((r-1), \emptyset)$ with $r\ge 2$. We have
 $\det G_{1, \lambda}=0$ if and only if
$\rho^{2}\in \{q^{-2}, q^{2r-2}\}$.  \end{Lemma}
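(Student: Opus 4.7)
The plan is to exploit the two-piece structure of $C(1,\lambda)$ as an $\H_r$-module and then test each piece for $\mathscr{B}_{r,1}$-invariance, invoking the Graham--Lehrer criterion that $\det G_{1,\lambda}=0$ iff the cell module has a proper submodule.

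First set up the basis. Since $|\Std(\lambda)|=1$ and $|\mathscr{D}_{r,1}^1|=r$, the module $C(1,\lambda)$ has dimension $r$ with basis $v_i:=e_1\n_\lambda g_{1,i}$ for $1\le i\le r$, where $\n_\lambda$ is the Murphy antisymmetrizer for the one-row partition $(r-1)$ sitting inside the Hecke subalgebra $\langle g_2,\dots,g_{r-1}\rangle$. Using that $\n_\lambda$ commutes with $e_1$, that $\n_\lambda g_k=-q^{-1}\n_\lambda$ for $2\le k\le r-1$, and that $e_1g_1e_1=\rho e_1$, one writes down the explicit right action: $v_ig_k=-q^{-1}v_i$ whenever $k\notin\{i-1,i\}$; $v_{i-1}g_{i-1}=v_i$; $v_ig_{i-1}=v_{i-1}+(q-q^{-1})v_i$; $v_1e_1=\delta v_1$; and $v_ie_1=\rho(-q^{-1})^{i-2}v_1$ for $i\ge 2$. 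These formulas show that $v_1$ cyclically generates $C(1,\lambda)$ as a right $\H_r$-module with $v_1g_k=-q^{-1}v_1$ for $k\ge 2$, so $C(1,\lambda)|_{\H_r}\cong\Ind_{\H_{r-1}}^{\H_r}(\mathrm{sgn})$. By Hecke branching together with the standing assumption $e>\max\{r,s\}=r$, this decomposes as the one-dimensional sign representation of $\H_r$ plus a unique $(r-1)$-dimensional irreducible piece $N$; these are the only proper non-zero $\H_r$-submodules, hence $\det G_{1,\lambda}=0$ iff at least one of them is also $e_1$-stable.

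For the sign piece, spanned by $w:=\sum_{i=1}^{r}(-q^{-1})^{i-1}v_i$ (a routine check from the formulas gives $wg_k=-q^{-1}w$ for every $k$), I compute $we_1=\bigl(\delta-\rho q^{-1}[r-1]_{q^{-2}}\bigr)v_1$, where $[r-1]_{q^{-2}}=\sum_{j=0}^{r-2}q^{-2j}$. Since $v_1$ cyclically generates the full $r$-dimensional module, $v_1\notin N$; combined with $e_1(C(1,\lambda))\subseteq\mathrm{span}(v_1)$, this shows that the sign piece is $e_1$-stable iff $we_1=0$. Clearing denominators using $(q-q^{-1})\delta=\rho-\rho^{-1}$ together with the telescoping identity $(1-q^{-2})[r-1]_{q^{-2}}=1-q^{-2(r-1)}$ reduces this to $\rho^2=q^{2(r-1)}$. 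For $N$ the same observation forces $N\subseteq\ker e_1$ and hence, by dimension, $N=\ker e_1$; so $N$ is $e_1$-stable iff $\ker e_1$ is $\H_r$-invariant. Invariance of $\ker e_1$ under $g_2,\dots,g_{r-1}$ follows directly from the action formulas (a short telescoping cancellation), while imposing $g_1$-invariance against the single defining relation of $\ker e_1$ and letting $c_2,c_3,\ldots$ vary independently forces $\rho+q^{-1}\delta=0$ and $\delta+\rho q=0$, both equivalent via $\delta(q-q^{-1})=\rho-\rho^{-1}$ to $\rho^2=q^{-2}$.

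Combining, $\det G_{1,\lambda}=0$ iff $\rho^2\in\{q^{-2},q^{2(r-1)}\}$; the two values are distinct since $e>r$ excludes $q^{2r}=1$. The main technical obstacle lies in the $N$-case: extracting the clean condition $\rho^2=q^{-2}$ requires verifying the automatic cancellations that give $g_k$-invariance of $\ker e_1$ for $k\ge 2$ and then reducing the polynomial relation in $\rho$ and $q$ coming from the $g_1$-invariance constraint via $\delta(q-q^{-1})=\rho-\rho^{-1}$.
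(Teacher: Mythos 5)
Your proof is correct in substance but takes a genuinely different route from the paper. The paper handles the two directions of the equivalence by different means: the ``only if'' direction is abstract, invoking Lemma~\ref{zero1} (which itself rests on the action of the central element $c_{r,s}$ from Lemma~\ref{resi} together with the branching Theorem~\ref{branching}); the ``if'' direction is a direct computation on the $r\times r$ Gram matrix, exhibiting a row/column dependence when $\rho^2=q^{-2}$ or $\rho^2=q^{2r-2}$. You instead restrict $C(1,\lambda)$ to $\H_r$, identify it with $\Ind_{\H_{r-1}}^{\H_r}(\mathrm{sgn})$, use semisimplicity of $\H_r$ (valid under the section's standing hypothesis $e>r$) to split it into the sign piece and the hook piece $N$, and then test each piece for $e_1$-stability. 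This gives both directions at once and avoids the Gram determinant manipulations entirely. Your computations check out: the sign vector $w$ satisfies $we_1=\bigl(\delta-\rho q^{-1}[r-1]_{q^{-2}}\bigr)v_1$, which vanishes precisely when $\rho^2=q^{2(r-1)}$, and for $k\ge2$ one indeed has $L(cg_k)=-q^{-1}L(c)$ for the defining functional $L$ of $\ker e_1$, while $g_1$-invariance reduces to $\delta+\rho q=0$, i.e.\ $\rho^2=q^{-2}$.

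One small imprecision worth flagging: the assertion ``$N$ is $e_1$-stable iff $\ker e_1$ is $\H_r$-invariant'' is not literally an equivalence when $r=2$, since then $\dim\ker e_1=1=\dim(\mathrm{sgn})$ and an $\H_r$-invariant $\ker e_1$ could a priori be the sign line rather than $N$. The conclusion is still correct because in that situation $\ker e_1=\mathrm{sgn}$ forces $\rho^2=q^2=q^{2r-2}$, which is already recorded by the sign-piece analysis, while $\ker e_1=N$ gives $\rho^2=q^{-2}$ exactly as claimed. A cleaner formulation is: $N$ is $e_1$-stable iff $N=\ker e_1$ (this part is always an iff, by your $v_1\notin N$ observation); and one then checks directly when $\ker e_1$ equals the one-dimensional $\mathscr H_r$-eigenspace that is not the sign line. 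Also, the two equations you extract from $g_1$-invariance, $\rho+q^{-1}\delta=0$ and $\delta+\rho q=0$, are the same equation up to a factor of $q$, so the phrase ``forces ... and ...'' slightly overstates what happens; there is only one constraint.

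What the two approaches buy: the paper's Gram-matrix verification of the ``if'' direction is short but opaque, and its ``only if'' direction leans on the full machinery of Sections~4 and~6 (functors $\F,\G$, central element, branching). Your argument is self-contained given only the explicit action of $\mathscr B_{r,1}$ on $C(1,\lambda)$ and elementary $\H_r$-branching, and it has the advantage of explaining structurally \emph{why} exactly these two values of $\rho^2$ occur (one for each composition factor of the restricted module). It does, however, rely on the semisimplicity of $\H_r$, so it would not generalize without modification to the non-semisimple Hecke setting; the paper's Gram determinant computation, by contrast, is valid over any base.
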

\begin{proof}
If $\det G_{1, \lambda}=0$, then $\Rad C(1, \lambda)\neq 0$. So,
there is an irreducible $\mathscr B_{r, 1}$-module, say $D^{\ell,
\mu}$ such that $D^{\ell, \mu}\subset \Rad C(1, \lambda)$ and
$(\ell, \mu)\lhd(1, \lambda)$. Therefore, $\ell\in \{0, 1\}$.

 We have
$\ell=0$. Otherwise, there is  a non-trivial homomorphism from $C(1,
\mu) $ to $C(1, \lambda)$. Applying
 the functor $\F$ to both  $C(1, \mu) $ and  $C(1, \lambda)$
 and using Lemma~\ref{functors}(b)  yields a non-trivial homomorphism from $C(0, \mu) $ to $C(0,
 \lambda)$, forcing $\lambda=\mu$, a contradiction. When $\ell=0$,
 by Lemma~\ref{zero1}, we have $\rho^{2}\in \{q^{-2}, q^{2r-2}\}$ if
 $\det G_{1, \lambda}=0$. Finally, we need to verify  $\det G_{1,
 \lambda}=0$ if $\rho^{2}\in \{q^{-2}, q^{2r-2}\}$.

 Let $v_i=g_{i,
 1} \n_\lambda e_1+\mathscr B_{r, 1}^{\rhd(1, \lambda)}\in C(1, \lambda)$.
Then the Gram matrix $G_{1, \lambda}$ is the $r\times r$ matrix
$(a_{ij})$ with entry $a_{i,j}=\langle v_i, v_j\rangle$. More
explicitly, $\det G_{1, \lambda}=0$ if and only if
$$
 \begin{vmatrix}
\rho^{-1}\delta & 1 &  -q^{-1} & \cdots & (-q)^{2-r}\\
1 & \rho^{-1}\delta+q-q^{-1} & (-q)^{-2} & \cdots & (-q)^{1-r}\\
-q^{-1} & (-q)^{-2}
&   \rho^{-1}\delta+q-q^{-3}   & \cdots & (-q)^{-r}\\
\cdot & \cdot & \cdot & \cdots & \cdot \\
 \cdot & \cdot & \cdot & \cdots & \cdot \\
(-q)^{2-r} & (-q)^{1-r}  & (-q)^{-r} & \cdots &   \rho^{-1}\delta+q-q^{3-2r}\\
\end{vmatrix}=0.$$
Let $\det (b_{ij})$ be the  left hand side of the above
equality. We have $b_{2,j}=-q^{-1} b_{1,j}$, $1\le
j\le r$ if $\rho^2=q^{-2}$. When $\rho^2=q^{2(r-1)}$,  $\sum_{j=1}^r
(-q)^{1-j} b_{i,j}=0$, $ 1\le i\le r$. In any  case, we have $\det
G_{1, \lambda}=0$.
\end{proof}

By similar arguments, we have the following result.

\begin{Lemma}\label{onearc1} Suppose
 $\lambda=((1^{r-1}), \emptyset)$ with $r\ge 2$.
Then $\det G_{1, \lambda}=0$ if and only if
$\rho^{2}\in \{q^{2}, q^{2-2r}\}$.  \end{Lemma}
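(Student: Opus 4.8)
The statement is the mirror-image of Lemma~\ref{onearc}: for $\lambda=((1^{r-1}),\emptyset)$ we want $\det G_{1,\lambda}=0 \iff \rho^2\in\{q^2,q^{2-2r}\}$. The cleanest route is to observe that the column-partition $(1^{r-1})$ is the conjugate of the row-partition $(r-1)$, and that conjugation of partitions corresponds to the Hecke-algebra automorphism $g_i\mapsto -g_i^{-1}$ (equivalently $q\mapsto -q^{-1}$), under which $\m_\lambda\leftrightarrow\n_{\lambda'}$. So I would first set up the analogue of the explicit Gram matrix computation in Lemma~\ref{onearc}. Let $v_i=g_{i,1}\n_\lambda e_1+\mathscr B_{r,1}^{\rhd(1,\lambda)}\in C(1,\lambda)$; since now $\n_\lambda$ is built from the full symmetric group $\mathfrak S_{r-1}$ (as $\lambda^{(1)}=(1^{r-1})$ is a single column, $\mathfrak S_\lambda=\mathfrak S_{r-1}$), the relations \eqref{sgnindex} give $\n_\lambda g_i=-q^{-1}\n_\lambda$ for $1\le i\le r-1$. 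Using Lemma~\ref{tool}(c) and Definition~\ref{wbmw}(e), together with the braid relations exactly as in the proof of Lemma~\ref{onearc}, I would compute $a_{ij}=\langle v_i,v_j\rangle$ and obtain an $r\times r$ matrix which is the image of the matrix $(b_{ij})$ of Lemma~\ref{onearc} under $q\mapsto -q^{-1}$ (so $q-q^{-1}\mapsto -(q-q^{-1})$, i.e.\ $\delta\mapsto-\delta$, and the entries $(-q)^{i-j}$ are fixed). Concretely the diagonal becomes $\rho^{-1}\delta$, $\rho^{-1}\delta+q^{-1}-q$, $\rho^{-1}\delta+q^{-1}-q^{3}$, $\dots$, $\rho^{-1}\delta+q^{-1}-q^{2r-3}$, with off-diagonal entries $(-q)^{i-j}$ unchanged.

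Next I would follow the two-step argument of Lemma~\ref{onearc} verbatim. For the "only if" direction: if $\det G_{1,\lambda}=0$ then $\Rad C(1,\lambda)\ne0$ contains some $D^{\ell,\mu}$ with $(\ell,\mu)\lhd(1,\lambda)$, so $\ell\in\{0,1\}$; the case $\ell=1$ is excluded by applying the Schur functor $\F$ and Lemma~\ref{functors}(b) exactly as before (a nontrivial map $C(1,\mu)\to C(1,\lambda)$ would give $C(0,\mu)\to C(0,\lambda)$, forcing $\lambda=\mu$). When $\ell=0$, Lemma~\ref{zero1} applies: removing one removable node from each of $\lambda^{(1)}=(1^{r-1})$ and $\lambda^{(2)}=\emptyset$. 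But $\lambda^{(2)}=\emptyset$ has no removable node, so $\mu^{(2)}=\emptyset$ as well is impossible unless $[\lambda^{(2)}/\mu^{(2)}]$ is empty — here I must be a little careful; in the analogous Lemma~\ref{onearc} proof the same issue arises and is handled by the observation that the only relevant $\mu$ has $\mu^{(1)}$ obtained by deleting the single node in the last row of $(1^{r-1})$, giving $\mu^{(1)}=(1^{r-2})$, while the extra node $p_2$ lives in $\mu^{(2)}$ after restriction; one traces through $r$ down to the base case. The upshot is $\rho^2=q^{2k}$ with $k=\res(p_1)+\res(p_2)$, and for the column $(1^{r-1})$ the removable node sits at row $r-1$, column $1$, so $\res(p_1)=1-(r-1)=2-r$; the node $p_2$ added to the empty second component has $\res(p_2)=0$, giving $\rho^2=q^{2(2-r)}=q^{2-2r}$. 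The value $\rho^2=q^2$ should come out of the other branch of the case analysis in Lemma~\ref{zero1} (the "first case", where a node is removed from the second component in a length-two base step), matching $k=1$.

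For the "if" direction I would verify directly, using the explicit determinant, that $\rho^2\in\{q^2,q^{2-2r}\}$ forces a linear dependence among the rows of $(b_{ij})$. When $\rho^2=q^2$, i.e.\ $\rho^{-1}\delta = \rho^{-1}(\rho-\rho^{-1})(q-q^{-1})^{-1}=(1-\rho^{-2})(q-q^{-1})^{-1}=(1-q^{-2})(q-q^{-1})^{-1}=q^{-1}$, the first two rows satisfy $b_{2,j}=-q\,b_{1,j}$ for all $j$ (the conjugate of the relation $b_{2,j}=-q^{-1}b_{1,j}$ appearing in Lemma~\ref{onearc} under $q\mapsto-q^{-1}$), so $\det=0$. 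When $\rho^2=q^{2-2r}$, I would check the combination $\sum_{j=1}^r(-q)^{j-1}b_{i,j}=0$ for every $i$ (again the $q\mapsto-q^{-1}$ image of the relation $\sum_j(-q)^{1-j}b_{i,j}=0$ used before), which gives $\det=0$. Both identities are short direct computations from the explicit entries, using $\delta=(\rho-\rho^{-1})(q-q^{-1})^{-1}$.

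The main obstacle is bookkeeping rather than conceptual: getting the residues and signs right in the translation $(r-1)\leftrightarrow(1^{r-1})$ under $q\mapsto -q^{-1}$, and making sure the case analysis via Lemma~\ref{zero1} with an empty second component in $\lambda$ is handled with the same care as in Lemma~\ref{onearc} (in particular identifying which of the two cases yields $q^2$ versus $q^{2-2r}$). Since the authors say "by similar arguments", I expect the intended proof is exactly this transport of the Lemma~\ref{onearc} argument through the duality $\m\leftrightarrow\n$, together with the two explicit row-dependence checks for the converse.
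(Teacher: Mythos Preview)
Your strategy matches the paper's --- rerun the Lemma~\ref{onearc} argument with the column partition in place of the row partition --- and the structure (Schur functor $\F$ plus Lemma~\ref{zero1} for the ``only if'' direction, explicit row-dependence in the Gram matrix for the ``if'' direction) is correct.

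However, your set-up contains a basic confusion that would derail the Gram-matrix computation. For $\lambda^{(1)}=(1^{r-1})$ the Young subgroup $\mathfrak S_{\lambda^{(1)}}$ is the \emph{row} stabilizer of $\t^{\lambda^{(1)}}$, hence \emph{trivial} for a single column; so $\n_{(1^{r-1})}=1$ and the relation $\n_\lambda g_i=-q^{-1}\n_\lambda$ is false here --- that relation holds for $\lambda^{(1)}=(r-1)$, which is exactly the case already handled in Lemma~\ref{onearc}. What is true instead is that, working modulo $\mathscr B_{r,1}^{\rhd(1,\lambda)}$, each $g_i$ with $i\ge 2$ acts on the generator $e_1$ as the scalar $q$, because $C((1^{r-1}))\cong S_{(r-1)}$ is the trivial $\H_{r-1}$-module (Proposition~\ref{classspe}). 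This is what produces the $q\leftrightarrow -q^{-1}$ transport of the Gram matrix, but note that under $q\mapsto -q^{-1}$ the quantity $q-q^{-1}$ is \emph{fixed}, not negated as you wrote; consequently $\delta$ is fixed, your stated diagonal entries are off by a sign (the correct $a_{kk}$ for $k\ge2$ is $\rho^{-1}\delta+q^{2k-3}-q^{-1}$), and the off-diagonal $(-q)^{m}$ become $q^{-m}$ rather than being unchanged. Finally, in applying Lemma~\ref{zero1} the nodes $p_i$ are removable from the level-$0$ bipartition (whose first component has size $r$, not $r-1$), so the relevant residues are $1-r$ and $1$, yielding $\rho^2\in\{q^{2-2r},q^{2}\}$ directly. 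With these corrections your plan goes through exactly as in Lemma~\ref{onearc}.
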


\begin{Lemma}\label{ss-key1} If $\rho^2=q^{2(r+s-2)}$, then there is a $\mathscr B_{r, s}$-submodule
$M\subset C(1, \mu)$ with $\mu=((r-1), (s-1))$ such that $M\cong C(0, \lambda)$ with $\lambda=((r), (s))$.
\end{Lemma}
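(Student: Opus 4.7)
The plan is to exhibit a nonzero element $v\in C(1,\mu)$ which transforms under $\mathscr B_{r,s}$ exactly like the generator of $C(0,\lambda)$, and then take $M:=\mathscr B_{r,s}\cdot v$. Since $\lambda^{(1)}=(r)$ and $\lambda^{(2)}=(s)$ are one-row partitions, $C(0,\lambda)$ is one-dimensional, with generator $v_0=\n_\lambda+\mathscr B_{r,s}^{\rhd(0,\lambda)}$ on which $g_k\cdot v_0=-q^{-1}v_0$ for $1\le k\le r-1$, $g_k^*\cdot v_0=-q^{-1}v_0$ for $1\le k\le s-1$, and $e_1\cdot v_0=0$. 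Thus, once $v$ is shown to satisfy these three families of relations, the assignment $v_0\mapsto v$ determines an injection $C(0,\lambda)\hookrightarrow C(1,\mu)$.

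Using the basis $\{w_{i,j}:=g^*_{j,1}g_{i,1}e_1\n_\mu+\mathscr B_{r,s}^{\rhd(1,\mu)}\mid 1\le i\le r,\ 1\le j\le s\}$ of $C(1,\mu)$ from Proposition~\ref{cell-m}(b), set
$$v:=\sum_{i=1}^{r}\sum_{j=1}^{s}(-q)^{-(i+j-2)}w_{i,j},$$
which is visibly nonzero. For $1\le k\le r-1$, the braid relations together with the commutations $g_k e_1=e_1 g_k$ and $g_k g^*_\ell=g^*_\ell g_k$ for $k\ge 2$, and the identity $g_\ell \n_\mu=-q^{-1}\n_\mu$ for $s_\ell\in\mathfrak S_{\mu^{(1)}}$, yield
$$g_k w_{i,j}=\begin{cases} w_{i+1,j}, & k=i,\\ (q-q^{-1})w_{i,j}+w_{i-1,j}, & k=i-1,\\ -q^{-1}w_{i,j}, & \text{otherwise.}\end{cases}$$
The geometric ratio $c_{i,j}/c_{i-1,j}=-q^{-1}$ of the coefficients of $v$ ensures that these contributions combine into $-q^{-1}v$ via straightforward coefficient matching; $g_k^*v=-q^{-1}v$ is established identically by symmetry.

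The crux is $e_1 v=0$, where the hypothesis enters. Using $e_1 g_k=g_k e_1$ and $e_1 g_k^*=g_k^* e_1$ for $k\ge 2$, pull the outer $e_1$ through every factor of $g^*_{j,1}g_{i,1}$ so that it meets the inner $e_1$. Evaluate the result via the identities $e_1^2=\delta e_1$, $e_1 g_1 e_1=e_1 g_1^*e_1=\rho e_1$, and $e_1 g_1^*g_1 e_1=e_1 e_2+(q-q^{-1})\rho e_1$ from Lemma~\ref{tool}, discarding the $e_1 e_2$ contribution since $\mathscr B_{r,s}^2\subseteq\mathscr B_{r,s}^{\rhd(1,\mu)}$; then apply $g_\ell\n_\mu=-q^{-1}\n_\mu$ to the remaining factors. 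This produces $e_1 w_{i,j}\equiv\alpha_{i,j}w_{1,1}$ with $\alpha_{1,1}=\delta$, $\alpha_{i,1}=\alpha_{1,i}=\rho(-q^{-1})^{i-2}$, and $\alpha_{i,j}=(q-q^{-1})\rho(-q^{-1})^{i+j-4}$ for $i,j\ge 2$. Summing, the coefficient of $w_{1,1}$ in $e_1 v$ equals $\delta-\rho(A_r+A_s)+(q-q^{-1})\rho A_r A_s$ with $A_n:=\sum_{i=2}^n q^{3-2i}$; using the geometric identity $(q-q^{-1})A_n=1-q^{2-2n}$, this simplifies to
$$e_1 v=\frac{\rho^{-1}q^{-2(r+s-2)}}{q-q^{-1}}\bigl(\rho^2-q^{2(r+s-2)}\bigr)w_{1,1},$$
which vanishes precisely under the hypothesis. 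This is consistent with Lemma~\ref{resi}, which predicts $\rho^2=q^{2(r+s-2)}$ as the unique value making the central element $c_{r,s}$ act by the same scalar on $C(0,\lambda)$ and $C(1,\mu)$. The main obstacle is the bookkeeping in this telescoping sum: the coefficients $(-q)^{-(i+j-2)}$ of $v$ are engineered precisely so that all cross-terms collapse into the single discrepancy $\rho^2-q^{2(r+s-2)}$.
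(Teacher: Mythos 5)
Your proof is correct and follows essentially the same route as the paper, which takes $v=\n_\lambda e_1\n_\mu+\mathscr B_{r,s}^{\rhd(1,\mu)}$ (a scalar multiple of your $v$, since expanding $\n_\lambda$ over coset representatives of $\mathfrak S_{r-1}\times\mathfrak S_{s-1}$ inside $\mathfrak S_r\times\mathfrak S_s$ reproduces your coefficients $(-q)^{-(i+j-2)}$) and then states the computation $e_1 v=\bigl(\delta-\rho\frac{1-q^{-2(r+s-2)}}{q-q^{-1}}\bigr)e_1\n_\mu$ is routine. Your version spells out the bookkeeping the paper omits and has the minor advantage that $v\neq 0$ is manifest from the basis expansion, whereas the paper invokes the standing hypothesis $e>\max\{r,s\}$ to guarantee $\n_\lambda e_1\n_\mu$ does not vanish.
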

\begin{proof} We consider $\kappa$-space $M$ spanned by  $ v=\n_\lambda e_1 \n_\mu +\mathscr B_{r, s}^{\rhd (1, \mu)}\in C(1, \mu)$.
Since we are assuming that $e>\max\{r, s\}$, $v\neq 0$, forcing $M\cong   C(0, \lambda)$ as left $\mathscr H_r\otimes \mathscr H_{s}$-modules. It is routine to check that
$$e_1 v=(\delta-\rho \frac{1-q^{-2(r+s-2)}}{q-q^{-1}}) e_1 \n_\mu +\mathscr B_{r, s}^{\rhd (1, \mu)},$$
which is zero if $\rho^2=q^{2(r+s-2)}$. So,  $M\cong C(0, \lambda)$ as  $\mathscr B_{r, s}$-submodules. \end{proof}

The following result can be proved similarly. The only difference is that we have to use $\m_{\lambda'}$ instead of $\n_\lambda$
in the proof of Lemma~\ref{ss-key1}.

\begin{Lemma}\label{ss-key2} If $\rho^2=q^{-2(r+s-2)}$, then there is a $\mathscr B_{r, s}$-submodule
$M\subset C(1, \mu)$ with $\mu=((1^{r-1}), (1^{s-1}))$ such that $M\cong C(0, \lambda)$ with $\lambda=((1^r), (1^s))$.
\end{Lemma}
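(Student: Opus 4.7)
The strategy mirrors the proof of Lemma~\ref{ss-key1}, with the only change being to replace the anti-symmetrizer $\n_\lambda$ by the symmetrizer $\m_{\lambda'}$. Set $\lambda'=((r),(s))$ and $\mu=((1^{r-1}),(1^{s-1}))$, and consider
\[
v = \m_{\lambda'} e_1 \n_\mu + \mathscr B_{r,s}^{\rhd(1,\mu)} \in C(1,\mu);
\]
observe that $\n_\mu=1$ since $\mathfrak{S}_\mu=\{1\}$. Since we assume $e>\max\{r,s\}$, the full symmetrizer $\m_{\lambda'}$ is nonzero, and its image $v$ in the cell quotient $C(1,\mu)$ is nonzero by the cellular-basis description of that module.

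The plan is then to verify that $M:=\kappa v$ is stable under the left action of $\mathscr B_{r,s}$. Applying the anti-involution $\sigma$ of Lemma~\ref{inv} to the identities in \eqref{sgnindex} yields $g_i\,\m_{\lambda'}=q\,\m_{\lambda'}$ for $1\le i\le r-1$ and $g_j^*\,\m_{\lambda'}=q\,\m_{\lambda'}$ for $1\le j\le s-1$, and hence $g_i v=g_j^* v=qv$. It remains to handle $e_1$. Performing the same contraction as in Lemma~\ref{ss-key1} but with $\m_{\lambda'}$ in place of $\n_\lambda$, we obtain
\[
e_1 v = \Big(\delta - \rho\,\frac{1-q^{2(r+s-2)}}{q-q^{-1}}\Big) e_1\n_\mu + \mathscr B_{r,s}^{\rhd(1,\mu)},
\]
and the coefficient in parentheses vanishes precisely when $\rho^2=q^{-2(r+s-2)}$. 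Under the hypothesis this gives $e_1 v=0$, so $M$ is indeed a submodule.

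Finally, we identify $M\cong C(0,\lambda)$ for $\lambda=((1^r),(1^s))$. By Proposition~\ref{classspe}, $C(0,\lambda)$ coincides with $S_{(r)}\otimes S_{(s)}$, the trivial $\H_r\otimes\H_s$-module, so both $g_i$ and $g_j^*$ act by $q$; moreover $e_1$ acts as $0$ on $C(0,\lambda)$ because $e_1\in\mathscr B_{r,s}^1\subseteq\mathscr B_{r,s}^{\rhd(0,\lambda)}$. Since $M$ carries exactly this structure, sending a generator of $C(0,\lambda)$ to $v$ provides the required isomorphism.

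The main technical obstacle is the $e_1$-computation displayed above. It is carried out by decomposing $\m_{(r)}$ and $\m_{(s)}^*$ according to the position of the index $1$, using Lemma~\ref{tool}(c) to contract the two outer copies of $e_1$ in $e_1\m_{\lambda'}e_1$, and reassembling the result as a quantum-content sum. The calculation is formally the image of the one in Lemma~\ref{ss-key1} under the substitution $q\mapsto -q^{-1}$, and the resulting sign flip is exactly what turns the condition $\rho^2=q^{2(r+s-2)}$ into $\rho^2=q^{-2(r+s-2)}$.
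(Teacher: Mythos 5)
Your proof is correct and follows exactly the route the paper indicates: replace $\n_\lambda$ by $\m_{\lambda'}$ in the proof of Lemma~\ref{ss-key1}, observe that $\n_\mu=1$ for the column bipartition $\mu$, and redo the contraction $e_1\m_{\lambda'}e_1$ to get the coefficient $\delta-\rho\,\frac{1-q^{2(r+s-2)}}{q-q^{-1}}$, which vanishes precisely when $\rho^2=q^{-2(r+s-2)}$. The concluding remark about the formal substitution $q\mapsto -q^{-1}$ is heuristic rather than a proof, but the explicit calculation you describe stands on its own, so there is no gap.
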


\begin{Prop}\label{semi1} Suppose $r, s\in \mathbb Z^{>0}$ and  $\delta\neq 0$. If $e>\max\{r, s\}$, then $\mathscr B_{r, s}$ is semisimple
if and only if  $ \prod_{\lambda}\det G_{1,
\lambda}\neq 0$ in $\kappa$ where
$\lambda\in \cup_{k=2}^{r+s-1}\{((k-1), \emptyset),
((1^{k-1}), \emptyset)\}$.
\end{Prop}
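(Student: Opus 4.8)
The plan is to rewrite both sides of the equivalence as conditions on the parameters $q,\rho$. By Lemmas~\ref{onearc} and~\ref{onearc1} (the latter applied with $r$ replaced by each $k$, $2\le k\le r+s-1$), the condition $\prod_{\lambda}\det G_{1,\lambda}\ne 0$ is equivalent to
$$\rho^{2}\ne q^{2m}\qquad\text{for every integer }m\text{ with }1\le|m|\le r+s-2 .$$
Hence it suffices to prove that, under $\delta\ne0$ and $e>\max\{r,s\}$, $\mathscr B_{r,s}$ is semisimple if and only if $\rho^{2}\notin\{q^{2m}\mid 1\le|m|\le r+s-2\}$. Throughout I use that $e>\max\{r,s\}$ forces each $\H_{r-f}\otimes\H_{s-f}$ to be semisimple, so $\phi_{\lambda}\ne0$ for all $\lambda\in\Lambda^{f}_{r,s}$; by Lemma~\ref{bilequal} and $\delta\ne0$ this gives $D^{f,\lambda}\ne0$ for every $(f,\lambda)\in\Lambda_{r,s}$, so by Theorem~\ref{cell-tool} $\mathscr B_{r,s}$ is semisimple exactly when every cell module $C(f,\lambda)$ is irreducible. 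Applying the Schur functor $\F$ of Section~4 $f$ times to $C(f,\lambda)$ produces an irreducible module over a semisimple Hecke algebra, so the only composition factor $D^{f,\mu}$ of $C(f,\lambda)$ with first index $f$ is $D^{f,\lambda}$; thus every composition factor $D^{g,\mu}\ne D^{f,\lambda}$ of $C(f,\lambda)$ has $g<f$.

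\textbf{Sufficiency.} Induct on $r+s$. Assume $\rho^{2}\notin\{q^{2m}\mid 1\le|m|\le r+s-2\}$ but $\mathscr B_{r,s}$ is not semisimple, and pick $(f,\lambda)$ with $C(f,\lambda)$ having a composition factor $D^{g,\mu}$, $g<f$. Using $\F\G=1$, $\F(C(f',\nu))\cong C(f'-1,\nu)$ (Lemma~\ref{functors}) and $\mathfrak e D^{g',\nu}\cong D^{g'-1,\nu}$ for $g'\ge1$, applying $\F$ exactly $g$ times shows that $D^{0,\mu}=C(0,\mu)$ is a composition factor of the cell module $C(f-g,\lambda)$ of $\mathscr B_{r-g,s-g}$; since the latter again satisfies $\delta\ne0$ and $e>\max$, we may replace $(r,s)$ by $(r-g,s-g)$ and assume $g=0$, $f\ge1$. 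If $f=1$, Lemma~\ref{zero1} forces $\rho^{2}=q^{2k}$ with $k=\res(p_{1})+\res(p_{2})$ for removable nodes of $\mu^{(1)}\vdash r$ and $\mu^{(2)}\vdash s$; as $\delta\ne0$ gives $\rho^{2}\ne1$ we get $k\ne0$, so $1\le|k|\le r+s-2$, a contradiction. If $f\ge2$, then $r,s\ge2$; restricting to $\mathscr B_{r-1,s}$ and using that $C(0,\mu)$ restricts to a semisimple module while the branching filtration of Theorem~\ref{branching} writes $C(f,\lambda)\!\downarrow$ in terms of the cell modules $C(f,\alpha^{(i)})$ and $C(f-1,\beta^{(j)})$, some $C(0,\nu)$ (with $\nu$ obtained from $\mu$ by deleting a node of $\mu^{(1)}$) is a composition factor of one of these, so $\mathscr B_{r-1,s}$ is not semisimple; by the inductive hypothesis $\rho^{2}=q^{2k}$ with $1\le|k|\le(r-1)+s-2<r+s-2$, again a contradiction.

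\textbf{Necessity (contrapositive).} Suppose $\rho^{2}=q^{2m}$ with $1\le|m|\le r+s-2$; I show $\mathscr B_{r,s}$ is not semisimple, by induction on $r+s$. If $|m|=r+s-2$, Lemma~\ref{ss-key1} (if $m>0$) or Lemma~\ref{ss-key2} (if $m<0$) gives a proper nonzero submodule of $C(1,((r-1),(s-1)))$ respectively $C(1,((1^{r-1}),(1^{s-1})))$; since a cell module has simple top and is therefore indecomposable, this shows $\mathscr B_{r,s}$ is not semisimple. If $|m|<r+s-2$, one of $r,s$ exceeds $1$, say $r\ge2$; then $|m|\le(r-1)+s-2$, so by the inductive hypothesis some cell module $C(g,\tau)$ of $\mathscr B_{r-1,s}$ has $\det G_{g,\tau}=0$. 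Fixing an addable node $p$ of $\tau^{(1)}$ and setting $\lambda=(\tau^{(1)}\cup p,\tau^{(2)})$, the cell module $C(g,\lambda)$ of $\mathscr B_{r,s}$ has $C(g,\tau)$ as one of the layers of its branching filtration (Theorem~\ref{branching}); since the invariant form of $C(g,\lambda)$ is compatible with that filtration, its Gram determinant factors, up to a monomial in $q,\rho$, as the product of the Gram determinants of the layers, so $\det G_{g,\lambda}=0$ and $\mathscr B_{r,s}$ is not semisimple by Theorem~\ref{cell-tool}. The base cases $\min\{r,s\}=1$ reduce directly to Lemma~\ref{zero1} together with Lemmas~\ref{onearc}--\ref{onearc1}.

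\textbf{Main obstacle.} The delicate point is the last step of necessity: proving that the invariant form of a cell module of $\mathscr B_{r,s}$ is block-triangular for the branching filtration of Theorem~\ref{branching}, so that Gram determinants for $\mathscr B_{r-1,s}$ divide those for $\mathscr B_{r,s}$ — equivalently, propagating a singular cell module from $\mathscr B_{r-1,s}$ up to $\mathscr B_{r,s}$. The corner identification $\mathfrak e\,\mathscr B_{r,s}\mathfrak e\cong\mathscr B_{r-1,s-1}$ (Lemma~\ref{efe}, Corollary~\ref{leftright}), together with the fact that a corner $eAe$ of a semisimple algebra is semisimple, only reduces both indices simultaneously and so covers only the values $|m|\equiv r+s\pmod2$ with $|m|\ge|r-s|$; the remaining parameter values genuinely require this branching-filtration compatibility, or else an explicit Gram determinant computation extending Lemmas~\ref{ss-key1}--\ref{ss-key2} to non-extreme shapes.
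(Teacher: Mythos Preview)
Your sufficiency direction is essentially the paper's argument, and the reduction via $\F$ to the cases $g=0$, $f=1$ or $f\ge2$ is correct.

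The necessity direction has a genuine gap, and it is precisely the one you flag as the ``main obstacle'': you never prove that the invariant form on $C(g,\lambda)$ is block-triangular with respect to the branching filtration of Theorem~\ref{branching}, and hence the divisibility of Gram determinants you invoke is unsubstantiated. This statement is \emph{not} in the paper, and there is no reason to expect it to hold in general: the layers $N_i$ are only $\mathscr B_{r-1,s}$-submodules, while $\phi_{g,\lambda}$ is a $\mathscr B_{r,s}$-invariant form, so orthogonality of distinct layers is not automatic.

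The paper avoids this entirely by lifting non-semisimplicity along $\G:\mathscr B_{r-1,s-1}\text{-mod}\to\mathscr B_{r,s}\text{-mod}$ rather than from $\mathscr B_{r-1,s}$. Since $\G(C(\ell,\mu))\cong C(\ell+1,\mu)$ (Lemma~\ref{functors}(c)) and $\F\G=1$, a nonzero morphism $C(\ell',\nu)\to C(\ell,\mu)$ with $(\ell',\nu)\lhd(\ell,\mu)$ in $\mathscr B_{r-1,s-1}$ yields a nonzero morphism $C(\ell'+1,\nu)\to C(\ell+1,\mu)$ in $\mathscr B_{r,s}$; this immediately gives $\det G_{\ell+1,\mu}=0$. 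This handles all $|m|\le r+s-4$ by induction. You noticed this route but dismissed it as covering too few values of $m$; the point you are missing is that the single leftover value $|m|=r+s-3$ (besides $|m|=r+s-2$, which Lemmas~\ref{ss-key1}--\ref{ss-key2} already treat) can be handled directly, without any Gram-determinant factorisation. Concretely, for $\rho^2=q^{2(r+s-3)}$ and $r\ge3$ one applies Lemma~\ref{ss-key1} to $\mathscr B_{r-1,s}$ to get a nonzero hom $C(0,((r-1),(s)))\to C(1,((r-2),(s-1)))$; the target is the bottom layer of $\Res^L C(1,\nu)$ for $\nu=((r-2,1),(s-1))$, so Frobenius reciprocity gives $\Hom(\Ind^L C(0,((r-1),(s))),C(1,\nu))\ne0$. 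Using $\Ind^L=\Res^R\circ\G$ (Proposition~\ref{Gind}) and the branching filtration of $\Res^R C(1,((r-1),(s)))$, one checks that this hom must factor through the top section $C(0,((r-1,1),(s)))$, producing a proper submodule of $C(1,\nu)$ and hence non-semisimplicity of $\mathscr B_{r,s}$.

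So the fix is: replace your $(r,s)\to(r-1,s)$ step by $(r,s)\to(r-1,s-1)$ via $\G$, and supply the extra Frobenius-reciprocity argument for $|m|=r+s-3$.
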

\begin{proof}  
We prove our result by induction on $r+s$ for all possible $r, s\in \mathbb Z^{>0}$.  We assume $r+s>2$ since $\mathscr B_{1,1}$ is semisimple.
When $r+s=3$, any cell module of $\mathscr B_{r, s}$ is of form
either $C(1, \lambda)$ or $C(0, \lambda)$. Note that $(r, s)=\{(2, 1),  (1, 2)\}$. By Lemma~\ref{onearc},
$\det G_{1, \lambda}\neq 0$ if and only if $\det G_{1, \mu}\neq 0$ where $\lambda=((1), (0))$ and  $\mu=((0), (1))$.
 By Theorem~\ref{cell-tool}(b), we have the result. In the remaining
part of the proof, we assume that $r+s\ge 4$.

``$\Leftarrow$" If $\mathscr B_{r, s}$ is not semisimple over
$\kappa$, by Theorem~\ref{cell-tool}(b), we have $\det G_{f,
\lambda}= 0$ for some $(f, \lambda)\in \Lambda_{r, s}$. Since we are
assuming $e>\max\{r, s\}$, $\H_r\otimes \H_s$ is semisimple. Note
that any cell module $C(0, \lambda)$ of $\mathscr B_{r, s}$ with
$(0, \lambda)\in \Lambda_{r, s}$ can be considered as the
corresponding cell module for $\H_r\otimes \H_s$. We have $\det G_{0,
\lambda}\neq 0$. Therefore, $f>0$. We prove our result by induction
on $r+s$.

We can find  a simple module, say $D^{\ell, \mu}$ such that
$D^{\ell, \mu}\subseteq \Rad C(f, \lambda)$. So, $(\ell, \mu)\lhd(f,
\lambda)$. In particular, $\ell\le f$. It results in a non-zero
homomorphism from $C(\ell, \mu)$ to $C(f, \lambda)$. We claim
$\ell=0$. Otherwise, applying the functor $\F$ to both $C(\ell,
\mu)$ and $C(f, \lambda)$ and using Lemma~\ref{functors}
 yields  a non-zero homomorphism from $C(0, \mu)$ to $C(f-\ell, \lambda)$.
So,  $\mathscr B_{r_1, s_1}$ is not semisimple where $r_1=r-\ell, s_1=s-\ell$. This contradicts our
induction assumption on $r+s-2\ell$. Now,  we  assume $\ell=0$ and
$f\ge 1$.

We can assume $r\ge s$. Otherwise, we switch the role between $r$ and $s$ in the following statements.  So, $r\ge 2$. We consider
$C(0, \mu)$ as $\mathscr B_{r-1, s}$-module.
 In this case, let $C(0, \nu)$ be a composition
factor of $C(0, \mu)$ for some $\nu$ with $|\nu|=|\mu|-1$.
By Theorem~\ref{branching}, $C(0, \nu)$ has to be a composition factor of a cell module, say $C(f_1, \alpha)$
  for $\mathscr B_{r-1, s}$. Further, $f_1\geq f-1\geq 0$.
 So, $\mathscr B_{r-1, s}$  is  not semisimple over $\kappa$ if $f>1$, a contradiction.
 When $f=1$, since we are assuming that $\delta\neq 0$, which is equivalent to $\rho^2\neq 1$,
  by Lemmas~\ref{zero1}-\ref{onearc1}, there is a  $\lambda\in \cup_{k=2}^{r+s-1}\{((k-1), \emptyset),
((1^{k-1}), \emptyset)\}$, such that $\det G_{1,
\lambda}=0 $, a contradiction.

\medskip

 "$\Rightarrow$" Suppose $ \det G_{1,
\lambda}=0$, for some  $\lambda\in \{((k-1),
\emptyset), ((1^{k-1}), \emptyset)\}$ in $\kappa$.  We claim that $k\in \{r+s-1, r+s-2\}$.  Otherwise, by induction on $r+s-3$,
 $\mathscr B_{r-1, s-1}$ is not semisimple. Therefore,
  $\det G_{\ell, \mu}=0$ for some $(\ell, \mu)\in \Lambda_{r-1, s-1}$. Applying  the functor $\G$ to
   the cell module $C(\ell, \mu)$ yields  $\det G_{\ell+1, \mu}=0$. This contradicts our assumption that $\mathscr B_{r, s}$ is semisimple.
 Therefore,
$k\in \{r+s-1, r+s-2\}$. By Lemmas~\ref{onearc}-\ref{onearc1},  $\rho^2\in \{ q^{\pm 2}, q^{\pm 2(r+s-2)},  q^{\pm 2(r+s-3)}\}$.

 In fact,  $\rho^2\neq q^{\pm2}$. Otherwise,  $\mathscr B_{r-1, s-1}$ is not semisimple.  So, $\mathscr B_{r, s}$ is not semisimple, either.
 If   $\rho^2\neq   q^{\pm 2(r+s-2)}$, by Lemmas~\ref{ss-key1}-\ref{ss-key2},  $\mathscr B_{r, s}$ is not semisimple.

Suppose $\rho^2=  q^{ 2(r+s-3)}$. We can assume  $r+s> 4$. Otherwise, $\rho^2=q^2$, which has already been discussed. By Remark~\ref{rbigs}, we can assume
$r\ge 3$.

 By Lemma~\ref{ss-key1}, $\Hom (C(0, \lambda), C(1, \mu))\neq 0$ where
$\lambda=((r-1), (s))$ and $\mu= ((r-2), (s-1))$. By Theorem~\ref{branching}, $\Hom (C(0, \lambda),   \Res^L C(1, \nu))\neq 0$
 with $\nu=((r-2, 1), (s-1))$. By  Frobenius reciprocity, $\Hom( \Ind^L C(0, \lambda),  C(1, \nu))\neq 0$. By Proposition~\ref{Gind},
and Theorem~\ref{branching}, there is a filtration
$$0\subset C(1,(r-1), (s-1))\subset M \subset \Ind^L C(0, \lambda)$$  such that
$$M/ C(1, (r-1), (s-1))\cong C(0, (r), (s)) $$ and $$
\Ind^L C(0, \lambda)/M\cong  C(0, ((r-1,1), (s))) .$$
By Lemma~\ref{zero1}  $\Hom(N, C(1, \nu))=0$ if $N=C(0, (r), (s))$. Since we are assuming that   $e>\max\{r, s\}$,
$\Hom(N, C(1, \nu))=0$ if $N=C(1, (r-1), (s-1))$. So,
 $$\Hom( C(0, (r-1, 1), (s-1)),  C(1, \nu))\neq 0, $$ forcing
 $\mathscr B_{r, s}$ being  non semisimple, a contradiction. So, $\rho^2\neq  q^{ 2(r+s-3)}$. Finally, we remark that
 we can  prove  $\rho^2\neq  q^{- 2(r+s-3)}$ by arguments similar to those as above. The only difference is that we have to use conjugates of $\gamma$ instead of bipartition $\gamma$ in the previous
 arguments.
 We leave the details to the reader.
\end{proof}

\begin{Theorem}\label{semimain} Suppose  $r, s\in \mathbb Z^{>0}$.  Then $\mathscr B_{r, s}$ is (split)
semisimple over $\kappa$  if and only if $e>\max\{r, s\}$ and one of
the following conditions holds:
\begin{enumerate} \item  $\delta\neq 0$ and   $\rho^2\neq
q^{2a}$ for any $a\in \mathbb Z$ with $|a|\le r+s-2$.
\item   $\delta= 0$ and  $(r, s)\in \{(1,2), (2,1), (1, 3), (3,
1)\}$.
\end{enumerate}
\end{Theorem}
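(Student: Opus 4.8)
The plan is to decide, via Theorem~\ref{cell-tool}(b), whether $\prod_{(f,\lambda)\in\Lambda_{r,s}}\det G_{f,\lambda}$ vanishes, treating $\delta\ne0$ and $\delta=0$ separately. First I would dispose of the condition $e>\max\{r,s\}$: the $f=0$ cellular basis elements of $\mathscr B_{r,s}$ lie in the subalgebra $\H_r\otimes\H_s$ and are independent modulo $\mathscr B_{r,s}^1$, so the form $\phi_{0,\lambda}$ coincides with the Gram form of the cell module $C(\lambda)$ of $\H_r\otimes\H_s$; hence if $e\le\max\{r,s\}$ some $\det G_{0,\lambda}=0$ and $\mathscr B_{r,s}$ is not semisimple. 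So from now on $e>\max\{r,s\}$ and every $C(0,\lambda)$ is a semisimple $\mathscr B_{r,s}$-module.

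For $\delta\ne0$ the plan is to combine Proposition~\ref{semi1} with Lemmas~\ref{onearc} and \ref{onearc1}. By Proposition~\ref{semi1}, $\mathscr B_{r,s}$ is semisimple iff $\prod_{\lambda}\det G_{1,\lambda}\ne0$ over $\lambda\in\bigcup_{k=2}^{r+s-1}\{((k-1),\emptyset),((1^{k-1}),\emptyset)\}$. Lemma~\ref{onearc} makes the single-row factor vanish exactly for $\rho^2\in\{q^{-2},q^{2(k-1)}\}$ and Lemma~\ref{onearc1} makes the single-column factor vanish exactly for $\rho^2\in\{q^{2},q^{-2(k-1)}\}$; running over $k$, the product vanishes precisely when $\rho^2\in\{q^{2a}:1\le|a|\le r+s-2\}$. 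Since $\delta\ne0$ already excludes $\rho^2=q^{0}$, semisimplicity is equivalent to $\rho^2\ne q^{2a}$ for all $|a|\le r+s-2$, which is condition~(1).

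For $\delta=0$, i.e. $\rho^2=1$: if $r=s$ then $(\,r,(\emptyset,\emptyset)\,)\in\Lambda_{r,s}$ and Lemma~\ref{bilequal}(b) gives $\phi_{r,(\emptyset,\emptyset)}=0$, hence $\det G_{r,(\emptyset,\emptyset)}=0$ and $\mathscr B_{r,s}$ is not semisimple, which matches the absence of $(r,r)$ from~(2). Assume now $r\ne s$, say $r>s\ge1$ by Remark~\ref{rbigs}. If $(r,s)\in\{(2,1),(3,1)\}$ I would verify semisimplicity directly: since $\min\{r,s\}=1$ the only cell modules are the semisimple $C(0,\lambda)$ and the $C(1,(\mu,\emptyset))$ with $\mu\in\Lambda^+(r-1)$, and as $r-1\le2$ each $\mu$ is a single row or column, so by Lemmas~\ref{onearc}--\ref{onearc1} $\det G_{1,(\mu,\emptyset)}=0$ would force $\rho^2\in\{q^{\pm2},q^{\pm2(r-1)}\}$, impossible since $\rho^2=1$ while $q^2\ne1$ and $q^{2(r-1)}\ne1$ (as $e>r>r-1\ge1$); then $\mathscr B_{r,s}\cong\mathscr B_{s,r}$ also gives $(1,2),(1,3)$. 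The remaining claim, that $\mathscr B_{r,s}$ is \emph{not} semisimple when $\delta=0$, $r>s\ge1$, $(r,s)\notin\{(2,1),(3,1)\}$, is the heart of the proof: in the spirit of Lemmas~\ref{ss-key1}--\ref{ss-key2} I would exhibit some $C(0,\lambda)$ as a proper nonzero submodule of some $C(1,\mu)$. When $s\ge2$ take $\mu=((1^{r-1}),(s-1))$ and add the residue-$1$ addable node of $(1^{r-1})$ and the residue-$(-1)$ addable node of $(s-1)$, giving $\lambda=((2,1^{r-2}),(s-1,1))$; when $s=1$, $r\ge4$, take $\mu=((2,1^{r-3}),\emptyset)$ and add the residue-$0$ addable nodes of $(2,1^{r-3})$ and of $\emptyset$, giving $\lambda=((2,2,1^{r-4}),(1))$. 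In both cases the two added residues sum to $0$, these shapes exist exactly in the stated range, and one checks that $v=\n_\lambda e_1\n_\mu+\mathscr B_{r,s}^{\rhd(1,\mu)}$ generates a submodule $M\cong C(0,\lambda)$, the only point being $e_1v=0$ because the relevant coefficient is assembled from $\delta$ and a $q$-analogue of the total content of the two boxes, both of which vanish ($\delta=0$ and the residues sum to $0$). Since $e>\max\{r,s\}$ we have $M\ne0$, and comparing $\dim C(1,\mu)=rs\,|\Std(\mu^{(1)})|\,|\Std(\mu^{(2)})|$ with $\dim C(0,\lambda)=|\Std(\lambda^{(1)})|\,|\Std(\lambda^{(2)})|$ (e.g. $rs$ against $(r-1)(s-1)$ when $s\ge2$) shows $M$ is proper, so $\det G_{1,\mu}=0$ and $\mathscr B_{r,s}$ is not semisimple.

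The step I expect to be the main obstacle is exactly this last construction — proving that the mixed row/column vector $v$ genuinely spans a copy of $C(0,\lambda)$ inside $C(1,\mu)$, that is, the ``if'' counterpart of Lemma~\ref{zero1} for these shapes. Lemmas~\ref{ss-key1}--\ref{ss-key2} supply it only when both components of $\mu$ are single rows or single columns, so one must redo the computation of $e_1v\ \mathrm{mod}\ \mathscr B_{r,s}^{\rhd(1,\mu)}$ for the shapes above, which is more delicate because of the mixed shapes and the straightening identities \eqref{gf}--\eqref{gf1}. A possible alternative would be to extract the same reducibility from the branching filtration of Theorem~\ref{branching} together with the scalar action of the central element $c_{r,s}$ from Lemma~\ref{resi}, but the explicit-submodule route looks the shortest.
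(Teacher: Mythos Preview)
Your handling of the condition $e>\max\{r,s\}$, the entire $\delta\ne0$ case via Proposition~\ref{semi1} and Lemmas~\ref{onearc}--\ref{onearc1}, and the easy $\delta=0$ subcases ($r=s$ via Lemma~\ref{bilequal}(b), and $(r,s)\in\{(2,1),(3,1),(1,2),(1,3)\}$ directly) all match the paper's proof.

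For the remaining $\delta=0$ cases your route diverges from the paper's, and the obstacle you flag is a genuine gap, larger than you indicate. The argument of Lemmas~\ref{ss-key1}--\ref{ss-key2} works precisely because $C(0,\lambda)$ is one-dimensional there: the single vector $v=\n_\lambda e_1\n_\mu$ already spans $M$, every $g_i$ and $g_j^*$ acts on $v$ by the scalar $-q^{-1}$, and so the single check $e_1v=0$ suffices. For your shapes $\lambda=((2,1^{r-2}),(s-1,1))$ or $((2,2,1^{r-4}),(1))$ the module $C(0,\lambda)$ has dimension $(r-1)(s-1)$ or larger, the $\H_r\otimes\H_s$-span of $v$ is a priori only a quotient of the permutation-type module $(\H_r\otimes\H_s)\n_\lambda$ (so could contain several $C(0,\nu)$ with $\nu\unrhd\lambda$), and even granting $e_1v=0$ you would still need $e_1 g_w g_{w'}^{*}v=0$ for all $w\in\mathfrak S_r$, $w'\in\mathfrak S_s$. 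Your claim that the coefficient in $e_1v$ is ``$\delta$ plus a $q$-analogue of the total content of the two boxes'' is exactly what the single-row computation in Lemma~\ref{ss-key1} produces, but for mixed shapes the straightening of $e_1\n_\lambda e_1$ against $\n_\mu$ does not collapse to a single scalar; nor is $v\ne0$ itself obvious here.

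The paper sidesteps all of this. It verifies by direct (machine) computation that $\det G_{1,\lambda}=0$ for $\lambda\in\{((2),(1)),\ ((2,1),\emptyset),\ ((1^3),(1))\}$, seeding the cases $|r-s|=1,3,2$ in $\mathscr B_{3,2}$, $\mathscr B_{4,1}$, $\mathscr B_{4,2}$; it then extracts a composition factor $D^{0,\mu}$ of $\Rad C(1,\lambda)$ and applies the functor $\G$ of Lemma~\ref{functors}(c) repeatedly to obtain a nonzero map $C(k,\mu)\to C(k+1,\lambda)$ in every $\mathscr B_{r,s}$ with the same $|r-s|$. For $|r-s|\ge4$ it proves inductively that $\det G_{1,((2,1^k),\emptyset)}=0$ in $\mathscr B_{k+3,1}$ for all $k\ge1$: assuming the claim for $k-1$, the branching filtration (Theorem~\ref{branching}) places $C(1,((2,1^{k-1}),\emptyset))$ as a submodule of $\Res^L C(1,((2,1^k),\emptyset))$, a composition factor $D^{0,\nu}$ is identified via Lemma~\ref{zero1} (forcing $\nu=((2^2,1^{k-2}),(1))$ since $\rho^2=1$), and Frobenius reciprocity together with Proposition~\ref{Gind} then contradicts irreducibility of $C(1,((2,1^k),\emptyset))$. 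Propagation by $\G$ again finishes the argument.
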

\begin{proof} Since $\H_r\otimes \H_s\cong \mathscr B_{r, s}/\langle e_1\rangle$,
where $\langle e_1\rangle $ is the two-sided ideal of $\mathscr
B_{r, s}$ generated by $e_1$, $\mathscr B_{r, s}$ is not semisimple
if  $e\le \max\{r, s\}$. So, we can assume $e>\max\{r, s\}$ when we
discuss the semisimplicity of $\mathscr B_{r, s}$. We remark that
the  result for $\delta\neq 0$  follows from
Theorem~\ref{cell-tool}(b),
 Lemmas~\ref{onearc}--\ref{onearc1} and Proposition~\ref{semi1}.

 Suppose $\delta=0$. By Theorem~\ref{main1}, $\mathscr B_{r, r}$ is not semisimple for all $r\in \mathbb Z^{>0}$.
 In the remaining part of the proof, we assume $r> s$ since $\mathscr B_{r,s}\cong \mathscr B_{s,r}$.

 When $r+s<5$, either  $r=2, s=1$ or $r=3$ and $s=1$. By Lemmas~\ref{onearc}--\ref{onearc1},
 $\det G_{1, \lambda}\neq 0$ for $\lambda\in \{((1), \emptyset), ((2),
 \emptyset), ((1^2), \emptyset)\}$. So, both  $\mathscr B_{2, 1}$
 and $\mathscr B_{3, 1}$ is semisimple over $\kappa$.
Now, we assume $r+s\ge 5$.

It is easy to see that the Gram matrix $G_{1, \lambda}$  for $\lambda\in \{((2),(1)), ((2,1),\emptyset), ((1^3), (1))\}$ is $a\times a$ matrix
with $a\in \{6, 8\}$. We use MATLAB software  to check $\det G_{1, \lambda}=0$.
So, $\mathscr B_{r,s}$ is not semisimple if $r+s=5$. Further,
 $\Rad C(1, \lambda)$ contains a non-zero irreducible module, say $D^{\ell, \mu}$, such that $(\ell, \mu)<(1, \lambda)$.
We claim $\ell=0$. Otherwise, Applying  the functor $\F$ to  both
$C(1, \lambda)$ and $C(1, \mu)$ yields a non-zero homomorphism from
$C(0, \mu)$ to $C(0, \lambda)$. Since we are assuming
$o(q^2)>\max\{r, s\}$, both  $C(0, \mu)$ and $C(0, \lambda)$ are
irreducible. Therefore, $\lambda=\mu$, a contradiction. So,
$\ell=0$. Applying the functor $G$ to  both $C(0, \mu)$ and $C(1,
\lambda)$, repeatedly yields  a non-zero homomorphism from $C(k,
\mu)$ to $C(1+k, \lambda)$ with $2k+|\mu|=r+s$. Since  $(k,
\mu)<(1+k, \lambda)$, $\det G_{1+k, \lambda}=0$. By
Theorem~\ref{cell-tool}(b), $\mathscr B_{r, s}$ is not semisimple if
$r+s\ge 5$ and $r=s+\ell$ with $\ell\in \{1, 2, 3\}$.

Finally, we assume  $r=s+(k+2)$ with $k\ge 2$ and $r+s> 5$. We
claim $\det G_{1, \lambda}=0$ if $\lambda=((2, 1^{k}), \emptyset)$
for $k\ge 1$. If so, standard arguments  on the functor $\G$ shows
that  $ \det G_{s, \lambda}=0$ and hence, by
Theorem~\ref{cell-tool}(b), $\mathscr B_{r, s}$ is not semisimple.

Suppose $\det G_{1, \lambda}\neq 0$. Then $C(1, \lambda)=D^{1,
\lambda}$. We have already verified $\det G_{1, ((2,1),
\emptyset)}=0$. In general, by Theorem~\ref{branching}, $C(1, \mu)$
is a submodule of $\Res^LC (1, \lambda)$ where $\mu=((2, 1^{k-1}),
\emptyset)$. By induction assumption, we have $\det G_{1, \mu}=0$.
So, $\Rad C(1, \mu)$ contains an irreducible $ {\mathscr B}_{r-1,
s}$--module $D^{f, \nu}$ with $(f, \nu)<(1, \mu)$.  We have $f\neq
1$. Otherwise,   applying
 the functor $\F$ to  both $C(1,
\mu)$ and $C(1, \nu)$ yields  $\nu=\mu$, a contradiction. So,
$f=0$. Since we are assuming  $\rho^2=1$,  by Lemma~\ref{zero1},
$\nu=((2^2, 1^{k-2}), (1))$. By Frobenius reciprocity, $\Hom(\Ind^L C(0, \nu), C(1,
\lambda)\neq 0$.  Using Proposition~\ref{Gind}  yields $\Ind^L
C(0, \nu)=\Res^R C(1, \nu)$. By Theorem~\ref{branching},
this module  has a filtration of cell modules such that each section
is of form either $C(1, ((2^2, 1^{k-2}), \emptyset))$ or
$C(0, \gamma)$'s for some $\gamma$, where each  $\gamma$ can be obtained from $(2^2, 1^{k-2})$ by
adding an addable node. So, $C(1, \lambda)=D^{1, \lambda}$ has to be a composition factor of
either $C(1, ((2^2, 1^{k-2}), \emptyset))$ or $C(0,
\gamma)$, forcing $\lambda=((2^2, 1^{k-2}), \emptyset)$, a contradiction.  So, our claim follows.
\end{proof}


\providecommand{\bysame}{\leavevmode ---\ } \providecommand{\og}{``}
\providecommand{\fg}{''} \providecommand{\smfandname}{and}
\providecommand{\smfedsname}{\'eds.}
\providecommand{\smfedname}{\'ed.}
\providecommand{\smfmastersthesisname}{M\'emoire}
\providecommand{\smfphdthesisname}{Th\`ese}

\end{document}